% begin: AMS-package.
\documentclass[11pt,a4paper]{article}

\usepackage[margin=1in]{geometry}
\usepackage[utf8]{inputenc}
\usepackage[T1]{fontenc}
\usepackage{amsmath}
\usepackage{enumerate}
\usepackage{amssymb}
\usepackage{ amsthm} 
\usepackage{graphicx}
\usepackage{MnSymbol}
\usepackage{accents}

\usepackage{tikz-cd}
\usepackage{tkz-graph}
\usetikzlibrary{decorations.markings}
\usepackage{hyperref}
\usepackage{bigints}
\hypersetup{colorlinks, linkcolor=blue}
\pagestyle{plain}
\usepackage{cleveref}
\usepackage[stable]{footmisc}

\DeclareMathOperator{\Ree}{Re}
\DeclareMathOperator{\Imm}{Im}
\renewcommand{\Re}{{\Ree}}

\newcommand{\Li}{\text{Li}}

\theoremstyle{definition}
\newtheorem{defi}{Definition}
\theoremstyle{definition}
\newtheorem{rmk}{Remark}
\theoremstyle{definition}

\theoremstyle{plain}
\newtheorem{thm}{Theorem}
\theoremstyle{plain}
\newtheorem{prop}{Proposition}
\theoremstyle{plain}
\newtheorem{lemma}{Lemma}
\theoremstyle{plain}
\newtheorem{cor}{Corollary}

\newcommand{\Q}{\mathbb{Q}}

\newcommand{\R}{\mathbb{R}}
\newcommand{\Cc}{\mathbb{C}}

\newcommand{\Pp}{\mathbb{P}}

\newcommand{\Sp}{\operatorname{Spec}}

\newcommand{\coker}{\operatorname{coker}}

\newcommand{\rk}{\operatorname{rk}}

\newcommand{\ra}{\rightarrow}

\title{\textbf{Motivic Galois coaction and one-loop Feynman graphs}}
\author{Matija Tapušković}
\date{}

\begin{document}
\maketitle

\begin{abstract}
 Following the work of Brown, we can canonically associate a family of motivic periods -- called the motivic Feynman amplitude -- to any convergent Feynman integral, viewed as a function of the kinematic variables. The motivic Galois theory of motivic Feynman amplitudes provides an organizing principle, as well as strong constraints, on the space of amplitudes in general, via Brown's "small graphs principle". This serves as motivation for explicitly computing the motivic Galois action, or, dually, the coaction of the Hopf algebra of functions on the motivic Galois group. In this paper, we study the motivic Galois coaction on the motivic Feynman amplitudes associated to one-loop Feynman graphs. We study the associated variations of mixed Hodge structures, and provide an explicit formula for the coaction on the four-edge cycle graph -- the box graph -- with non-vanishing generic kinematics, which leads to a formula for all one-loop graphs with non-vanishing generic kinematics in four-dimensional space-time. We also show how one computes the coaction in some degenerate configurations -- when defining the motive of the graph requires blowing up the underlying family of varieties -- on the example of the three-edge cycle graph.
\end{abstract}

\begin{section}*{Introduction}
\begin{subsection}{Context}

Integrals we will be interested in are those of the form:

\begin{equation}
\label{first-integral}
\int_{\sigma_G} \omega_G(m,q) ,
\end{equation}
where
\begin{equation}
\omega_G(m,q) = \frac{1}{\Psi^{d/2}_G}\left(\frac{\Psi_G}{\Xi_G}\right)^{N_G - h_G d/2} \Omega_G ,
\end{equation}
$G$ is a Feynman graph, $\Psi_G$, and $\Xi_G(m,q)$ are certain polynomials in the variables $\alpha_e$ indexed by the edges of $G$, $d,N_G,$ and $h_G$ are fixed integers, $\Omega_G = \sum\limits_{i=1}^{N_G} (-1)^i\alpha_id\alpha_1 \wedge \ldots \wedge \widehat{d\alpha_i}\wedge \ldots \wedge d\alpha_{N_G}$, and the domain of integration $\sigma$ is given by the real points of the coordinate simplex $\alpha_e \geq 0$. The polynomial $\Xi_G(m,q)$ depends on kinematic parameters. For algebraic values of those parameters these integrals are, when they converge, \textit{periods} in the sense of Kontsevich-Zagier \cite{KZ}. Up to a factor, which is a special value of the gamma function, the integral \eqref{first-integral} is the Feynman integral associated to the graph $G$, in parametric form.

The fact that Feynman integrals can be viewed as periods enables a motivic approach to studying interesting patterns in their evaluations, such as patterns of multiple zeta values studied by Bloch, Esnault and Kreimer in \cite{BEK}. When viewed in this light, a new structure satisfied by Feynman integrals arises, as all periods conjecturally carry an action of the motivic Galois group \cite{Y}. Further study of this structure led to a very striking \textit{coaction conjecture} \cite[Conjecture 1.3]{PS}, which states that the action of the motivic Galois group should be closed on motivic Feynman amplitudes of primitive log-divergent graphs in $\phi^4$ theory, and moreover that the Galois conjugates of such Feynman integrals should be periods associated to subquotient graphs if one allows for non-$\phi^4$ primitive log-divergent graphs. The coaction conjecture is checked numerically therein for hundreds of examples, some of which have 11 loops. This, along with other evidence in different theories, e.g. \cite{S}, leads one to speculate that such a structure might exist more generally, possibly after enlarging the space of periods under consideration appropriately. An important reason for studying this structure is that any results of this type combined with easy results for small graphs provide very strong constraints on Feynman integrals to all loop orders. This is referred to as \textit{the small graphs principle} \cite[8.4,9.3]{Brown2}.

We will be working in the category of realizations over a smooth base scheme over $\mathbb{Q}$, following \cite[\S 1.21]{DelGFD} and \cite[\S 7.2]{Brown2}. In order to study the motivic Galois coaction we must first lift the Feynman integrals to \textit{motivic periods} defined as functions on the scheme of isomorphisms between the de Rham and Betti fiber functors of the category of realizations. Moreover, we would like to work in a more general context than the one in \cite{BEK,PS} by considering Feynman integrals as functions of masses and momenta of particles, which leads us to consider \textit{families of motivic periods}. In \cite{Brown2}, Brown provides a lifting of Feynman integrals to families of motivic periods, with mild constraints on the possible kinematics, thereby setting up the prerequisites for studying the Galois theory of a very general class of Feynman integrals. He also explains why we expect the Galois conjugates of motivic lifts of Feynman integrals to be motivic periods associated to subquotient graphs \cite[Conjecture 1]{Brown2}, and proves this in the "affine case" \cite[Theorem 8.11]{Brown2}.

In order to go further in this direction, we would like to understand in detail the  Galois theory of a family of Feynman integrals where we allow masses and momenta.  It is a theorem due to Nickel \cite{N} that one-loop integrals in four-dimensional space-time always evaluate to linear combinations of integrals associated to one-loop graphs with four edges, which in turn can be evaluated in terms of dilogarithms. From the perspective of algebraic geometry this was studied in \cite{BK}. It is shown there that the fact that these integrals evaluate to dilogarithms is a consequence of the fact that the geometry underlying these integrals carries a mixed Tate Hodge structure with weights 0, 2, and 4, which varies in a family over the space of kinematics. These structures are very well understood in algebraic geometry, and we use this here to study the Galois theory of one-loop integrals with kinematic dependence.
\end{subsection}

\begin{subsection}{Contents}

In the first section we provide a brief overview of the technical background and terminology. We recall the definition of families of motivic periods which we will be working with, as well as the definition of families of de Rham periods, and the de Rham Galois group. The results will be stated in terms of the coaction of the Hopf algebra of functions on the Galois group. We also briefly recall the definition of the \textit{motivic Feynman amplitude} $I_G^\mathfrak{m}(m,q)$, which is the family of motivic periods associated to the Feynman integral $I_G(m,q)$ of the Feynman graph $G$, depending on internal masses and external momenta. We use the term "motivic Feynman amplitude" for the motivic lift of a Feynman integral following Brown \cite{Brown2}. We will often drop $(m,q)$ altogether in order to ease notation, but dependence on masses and momenta is assumed throughout the paper. The second section contains general results on the (realizations of) motives of one-loop graphs, and can be regarded as restating the results of \cite{BK} in terms of motivic periods. In particular it contains the reduction of the motivic Feynman amplitude of any graph with more than four edges to a $k_S$-linear combination of motivic Feynman amplitudes of four-edge graphs in four space-time dimensions, which amounts to lifting the result of Nickel to motivic periods. We also recall how one computes the semi-simplifications of the associated mixed Hodge structures. In the third section we compute the coaction on the motivic Feynman amplitude on the four-edge one-loop graph with non-vanishing generic kinematics, which gives us the coaction for any one-loop graph with non-vanishing generic kinematics by the results of the previous section. Bearing in mind the definition of the associated motivic Feynman amplitude $I^\mathfrak{m}_G$ and its de Rham version $I^{\mathfrak{dr}}_G$ \eqref{motivic-Feynman-amplitude}, the definition of the Galois coaction \eqref{general-coaction}, as well as the definitions of the de Rham logarithm $\log^{\mathfrak{dr}}(x)$ and the Lefschetz de Rham period $\mathbb{L}^{\mathfrak{dr}}$ \ref{examples}, the main result of the third section is the following:
\begin{thm}
Let $G$ be a one-loop Feynman graph with 4 edges (Figure \ref{box-graph}), with generic non-vanishing masses and momenta. Let $I^\mathfrak{m}_G$ be its associated motivic Feynman amplitude in $d=4$ dimensions. Then the motivic Galois coaction on $I^\mathfrak{m}_G$ is:
\begin{equation}
\begin{split}
\Delta I_G^{\mathfrak{m}} &= I_G^{\mathfrak{m}} \otimes \left(\mathbb{L}^{\mathfrak{dr}}\right)^2 + \sum_{1 \leq j < k \leq 4}  I^{\mathfrak{m}}_{G/\{e_j,e_k\}}(\theta^1_{G/\{e_j,e_k\}})\otimes P_{j,k}\log^{\mathfrak{dr}}(f_{j,k}) \mathbb{L}^{\mathfrak{dr}}+ 1 \otimes I^{\mathfrak{dr}}_G ,
\end{split}
\end{equation}
where $I^{\mathfrak{m}}_{G/\{e_j,e_k\}}\left(\theta^1_{G/\{e_j,e_k\}}\right)$ is the motivic Feynman amplitude of the bubble graph obtained by contracting the edges $e_j$ and $e_k$ of $G$ in $d=2$ dimensions, 
\begin{equation}
f_{j,k} = \frac{\sqrt{(U)_{j,k}^2 - (U)_{j,j}(U)_{k,k}}-(U)_{j,k}}{\sqrt{(U)_{j,k}^2 - (U)_{j,j}(U)_{k,k}}+(U)_{j,k}},
\end{equation}
where $C,D_{j,k}$ are the matrices of the quadratic forms $\Xi_G$, $\Xi_{G/\{e_j,e_k\}}$ respectively, $U = C^{-1}$, \eqref{Symanzik-four-edge}, and $P_{j,k}=\frac{\sqrt{|\det D_{j,k} |}}{8\sqrt{|\det C|}}$.
\end{thm}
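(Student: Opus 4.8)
The plan is to compute the coaction by first understanding the variation of mixed Hodge structure underlying the box graph's motive, then reading off the coaction from the general formula \eqref{general-coaction} applied to this concrete weight-filtered object. By the results of the second section (restating \cite{BK}), the relevant motive is mixed Tate with weights $0$, $2$, $4$, so its de Rham realization has a three-step weight filtration; the coaction is therefore determined once we know (i) the weight-$2$ graded piece as a family of motives, (ii) the weight-$0$ piece, and (iii) the period matrix entries relating these graded pieces — i.e. the extension data. The weight-$4$ quotient contributes the term $1 \otimes I^{\mathfrak{dr}}_G$, and the weight-$0$ sub contributes $I^{\mathfrak{m}}_G \otimes (\mathbb{L}^{\mathfrak{dr}})^2$ since the bottom of the filtration is a Tate twist $\Q(-2)$ whose de Rham period is $(\mathbb{L}^{\mathfrak{dr}})^2$. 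The main content is the middle term.

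First I would identify the weight-$2$ graded piece. Geometrically, degenerating the box integrand along the boundary strata of the simplex $\sigma_G$ where pairs of edge-coordinates vanish produces the bubble (two-edge) subquotient graphs $G/\{e_j,e_k\}$; after semisimplification the weight-$2$ part splits as a direct sum over the $\binom{4}{2}=6$ such pairs of copies of the motive $H^1$ of a conic (the vanishing locus of the degenerate Symanzik form $\Xi_{G/\{e_j,e_k\}}$), which is a Kummer-type motive $\Q(0)\text{--}\Q(-1)$ once one rationalizes the conic via a point. This is where $I^{\mathfrak{m}}_{G/\{e_j,e_k\}}(\theta^1_{G/\{e_j,e_k\}})$ enters — it is the motivic period pairing the bubble's $H^1$-class against the relative homology class $\theta^1$ coming from the boundary simplex, in $d=2$ dimensions. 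I would make this matching precise by a residue/localization computation on the family of hyperplane-arrangement complements, tracking how $\omega_G$ restricts to the codimension-$2$ strata.

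Next I would compute the extension data: the de Rham period of the weight-$2$ graded piece relative to the weight-$0$ sub. Concretely this means integrating the de Rham representative of the bubble's cohomology class over the appropriate vanishing cycle and extracting the $\log$ term. The conic $\Xi_{G/\{e_j,e_k\}}=0$ is rational over the ground field once we adjoin $\sqrt{\det D_{j,k}}$ type quantities, and its associated Kummer motive has de Rham period $\log^{\mathfrak{dr}}$ of a cross-ratio of the four intersection points of the conic with the coordinate axes; a direct computation with the explicit Symanzik matrices $C$ and $D_{j,k}$, using $U=C^{-1}$, shows this cross-ratio equals $f_{j,k}$. The normalization constants $P_{j,k}=\sqrt{|\det D_{j,k}|}/(8\sqrt{|\det C|})$ and the sign $\pm$ in the $\binom{4}{2}$ terms come from comparing the chosen de Rham and Betti bases — the Jacobian of the linear change of variables diagonalizing $\Xi_G$ versus $\Xi_{G/\{e_j,e_k\}}$, together with the factors of $2$ from $\mathbb{L}^{\mathfrak{dr}}=\log^{\mathfrak{dr}}(\text{something})$ conventions and from the $\omega_G$ prefactor in $d=4$. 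The extra $\mathbb{L}^{\mathfrak{dr}}$ tensor factor in the middle term reflects that the box's weight-$2$ piece is a once-Tate-twisted bubble motive.

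The hard part will be step two: correctly identifying the semisimplified weight-$2$ piece as precisely the direct sum of bubble motives with the right twist, and verifying that the relative homology class $\theta^1_{G/\{e_j,e_k\}}$ — which lives on the contracted graph — is genuinely the image of the box's cycle class under the boundary/residue map, with no spurious contributions from deeper strata or from the quadric $\Xi_G=0$ itself. This requires a careful analysis of the variation of MHS in a neighborhood of the relevant boundary divisors in the compactified family, i.e. checking that the limit MHS is as expected and that the monodromy weight filtration coincides with the one from \cite{BK}; degenerate kinematics would spoil this, which is exactly why the hypothesis of \emph{generic non-vanishing} masses and momenta is imposed. Once the weight-graded pieces and the single extension class are pinned down, assembling them into the stated coaction formula is bookkeeping: the general coaction on a length-three mixed Tate object is the sum of three terms indexed by the weight-filtration steps, and we have just computed each.
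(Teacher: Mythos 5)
Your outline of the motivic (left-hand) side is essentially the paper's: the weight structure $\Q(0)$, $\Q(-1)^{\oplus 6}$, $\Q(-2)$ from the relative cohomology spectral sequence, and the identification of the six weight-two motivic periods with the $d=2$ bubble amplitudes via restriction to the codimension-two faces (in the paper this is done by the face maps of Definition \ref{face-map}, which is the precise version of your ``residue/localization on the strata''). But there is a genuine gap on the de Rham side, which is where the actual content of the formula for $f_{j,k}$ lives. The middle de Rham factor is $\left[mot'_G,\left[\omega^{ij}_G\right]^{\vee},[\omega_G]\right]^{\mathfrak{dr}}$, i.e.\ the extension entry between the weight-$4$ class $[\omega_G]$ and the weight-$2$ classes \emph{inside the box motive}; it is not the Kummer period of the bubble motive itself. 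Your proposal conflates the two: the ``cross-ratio of the four intersection points of the conic with the coordinate axes'' is exactly the argument of the \emph{motivic} logarithm on the left tensor factor (a function of $D_{j,k}$), whereas $f_{j,k}$ is built from the entries of $U=C^{-1}$ for the full box quadric \eqref{Symanzik-four-edge} and is a different function of the kinematics. Moreover, your proposed method for computing it --- ``integrating the de Rham representative over the appropriate vanishing cycle and extracting the $\log$ term'' --- is not a computation of a de Rham period at all: these objects pair a de Rham class against a dual de Rham class, not against a cycle, and cannot be evaluated by integration. The paper has to work for this step: it derives a Picard--Fuchs-type identity $\nabla_{q_1}[\omega_G]+B(m,q)[\omega_G]=[d\beta]$ via Griffiths' pole reduction (Lemma \ref{computing-picard-fuchs}), decomposes $[d\beta]$ through the face maps as $-\sum a_{ij}\,\Phi_{ij,dR}\left(\left[\theta^1_{G/\{e_i,e_j\}}\right]\right)$, and then compares the resulting first-order equation for the de Rham period with $\nabla\left(P\log^{\mathfrak{dr}}(f)\,\mathbb{L}^{\mathfrak{dr}}\right)$, fixing the remaining constant by specialization (an alternative route via the residue onto $Q$ and the de Rham projection is also sketched). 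None of this, nor any workable substitute, appears in your plan, so the identification of the de Rham factor as $P_{j,k}\log^{\mathfrak{dr}}(f_{j,k})\mathbb{L}^{\mathfrak{dr}}$ is asserted rather than proved.

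Two smaller points. First, your attribution of the outer terms is reversed: the bottom of the weight filtration is $\Q(0)$, not $\Q(-2)$; the basis vector $[\omega_G]$ of weight $4$ produces $I^{\mathfrak{m}}_G\otimes(\mathbb{L}^{\mathfrak{dr}})^2$ (the $(\mathbb{L}^{\mathfrak{dr}})^2$ being the diagonal de Rham period of the top graded piece $\Q(-2)$), while the weight-$0$ basis vector produces $1\otimes I^{\mathfrak{dr}}_G$. Second, the prefactors $P_{j,k}$ do not come only from a Jacobian bookkeeping; part of the normalization ($\sqrt{|\det D_{j,k}|}$) comes from expressing the bubble amplitude as a motivic logarithm, and the statement implicitly requires passing to the cover of $S$ adjoining the square roots of the determinants of $Q$ and its restrictions to faces, which your limit-MHS/monodromy discussion gestures at but does not pin down.
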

The motivic periods in the coaction in Theorem 1 are identified with motivic periods of quotient graphs via certain natural homomorphisms in the category of realizations called \textit{the face maps}, while the de Rham periods are determined by using the Gauss-Manin connection. In the next section, in which we study the triangle graph, another approach is used to determine the de Rham periods -- namely the residue homomorphism.

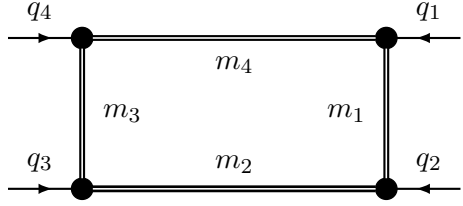
\begin{figure}[h]
\centering
\begin{tikzpicture}

\SetGraphUnit{1}
  
  \SetUpEdge[lw = 1pt,
  color      = black,
  labelcolor = white,
  labelstyle = {above,yshift=2pt}]
  
  \SetUpVertex[FillColor=black, MinSize=8pt, NoLabel]

  \Vertex[x=2,y=2]{0}
  \Vertex[x=2,y=0]{1}
  \Vertex[x=6,y=0]{2}
  \Vertex[x=6,y=2]{6}
  \Vertex[x=1,y=2,empty=true]{3}
  \Vertex[x=1,y=0,empty=true]{4}
  \Vertex[x=7,y=0,empty=true]{5}
  \Vertex[x=7,y=2,empty=true]{7}

  \Edge[label=$m_2$,style = {double}](2)(1) 
  \SetUpEdge[labelstyle = {yshift=-10pt}]
  \Edge[label=$m_4$,style = {double}](0)(6)
  \SetUpEdge[labelstyle = {yshift=10pt}]
  \tikzset{EdgeStyle/.style={postaction=decorate,decoration={markings,mark=at position 0.7 with {\arrow{latex}}}}}
  \Edge[label=$q_4$](3)(0)
  \Edge[label=$q_3$](4)(1)
  \Edge[label=$q_2$](5)(2)
  \Edge[label=$q_1$](7)(6)
  
  \SetUpEdge[labelstyle = {xshift=15pt}]
  \Edge[label=$m_3$,style = {inner sep=5pt, double, rotate=90}](0)(1)
  \SetUpEdge[labelstyle = {xshift=-15pt}]
  \Edge[label=$m_1$,style = {double, rotate=90}](2)(6) 
\end{tikzpicture}
\caption{The box graph. Internal edges which have non-vanishing mass are denoted by double lines.}
\label{box-graph}
\end{figure}

The coaction commutes with specialization to a point in an open subset of the space of kinematics over which the graph motive is defined. For closed subsets outside of the space of generic kinematics, such as those given by the vanishing of some masses, to which Theorem 1 therefore does not apply, one can still apply the methods presented in this paper to compute the Galois coaction, so long as the values of masses and momenta are such that $I_G(m,q)$ converges. However, one should bear in mind that in such cases it might not be possible to interpret the conjugates in the coaction in terms of motivic periods of motives of subquotient graphs. Understanding this subtlety motivates a more detailed study of a couple of special cases in the next section, in particular Theorem 2.

In the fourth section we will concern ourselves with the three-edge graph. The result for the case with generic non-vanishing kinematics is quite similar to the box graph case above, but a few special cases when some of the masses vanish reveal subtleties. The result in one of these special cases is the following:
\begin{thm}
Let $G$ be the Feynman graph with 3 edges and 1 loop, with all internal masses vanishing and non-trivial external momenta. Denote the three external momenta by $q_1,q_2,q_3$, and the associated motivic Feynman amplitude by $I^\mathfrak{m}_G$. Then the coaction on this motivic Feynman amplitude is:
\begin{equation}
\Delta I_G^{\mathfrak{m}} = I_G^{\mathfrak{m}} \otimes  \left(\mathbb{L}^{\mathfrak{dr}}\right)^2 + \left( a_1 \log^{\mathfrak{m}}\left(\frac{q_2^2}{q_3^2}\right) + a_2 \log^{\mathfrak{m}}\left(\frac{q_1^2}{q_3^2} \right)\right) \otimes \left(\log^{\mathfrak{dr}}(f_1) + \log^{\mathfrak{dr}}(f_2)\right)\mathbb{L}^{\mathfrak{dr}} + 1 \otimes I^{\mathfrak{dr}}_G ,
\end{equation}
where
\[
f_1= \frac{(q_1^2 + q_2^2-q_3^2 + \sqrt{q_1^4+q_2^4+q_3^4-2q_1^2q_3^2-2q_2^2q_3^2})^2}{4q_1^2q_2^2}
\]
and 
\[
f_2 = f_1 \frac{q_1^2 + q_3^2-q_2^2 - \sqrt{q_1^4+q_2^4+q_3^4-2q_1^2q_3^2-2q_2^2q_3^2}}{q_1^2 + q_3^2-q_2^2 + \sqrt{q_1^4+q_2^4+q_3^4-2q_1^2q_3^2-2q_2^2q_3^2}}
\]
for some undetermined constants $a_i \in k_S$.
\end{thm}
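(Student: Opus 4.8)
The plan is to compute the coaction on $I_G^{\mathfrak{m}}$ for the massless triangle by the same strategy as in Theorem 1, but exploiting that the degenerate configuration forces a blow-up of the underlying family of varieties before the motive is well-defined. The starting point is the general coaction formula \eqref{general-coaction} applied to the motivic Feynman amplitude: one needs (i) a basis of the de Rham realization of the relevant motive compatible with the weight filtration, (ii) the corresponding dual basis of the Betti realization, and (iii) the entries of the period matrix, so that the coaction can be read off as $\Delta[\gamma\mid\omega] = \sum_k [\gamma\mid e_k]\otimes[e_k^\vee\mid\omega]$. Since the triangle integral in $d=4$ with all masses zero is superficially divergent in the naive parametric form, the first step is to write down the blow-up of $\mathbb{P}^2$ along the coordinate points (or whichever linear subspaces are forced by the geometry studied in \cite{BK}) on which the strict transforms of $\Psi_G$ and $\Xi_G$ are in good position, and to identify the resulting relative cohomology as a variation of mixed Tate Hodge structure with weights $0,2,4$ over the space of momenta $(q_1^2,q_2^2,q_3^2)$.

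**Next I would** identify the weight-graded pieces. The top weight-$4$ piece is $\mathbb{Q}(-2)$, contributing the term $I_G^{\mathfrak{m}}\otimes(\mathbb{L}^{\mathfrak{dr}})^2$; the bottom weight-$0$ piece is $\mathbb{Q}(0)$, contributing $1\otimes I_G^{\mathfrak{dr}}$; the interesting contribution comes from the weight-$2$ graded piece, which for the massless triangle is an extension built out of Kummer motives $\log^{\mathfrak{m}}$ of ratios of the $q_i^2$ — this is where the blow-up divisors enter, since the relevant $\mathbb{Q}(1)$'s are supported on exceptional components. Concretely, one must (a) exhibit the sub-MHS generated by the $q_i^2$-ratios, matching it against the logarithmic motivic periods $\log^{\mathfrak{m}}(q_2^2/q_3^2)$ and $\log^{\mathfrak{m}}(q_1^2/q_3^2)$ that appear as the left tensor factors, and (b) compute the de Rham periods $\log^{\mathfrak{dr}}(f_1)$, $\log^{\mathfrak{dr}}(f_2)$ on the right by integrating the Gauss-Manin connection — equivalently, as the paper signals, by using the residue homomorphism along the exceptional/Symanzik divisors rather than the face maps used in Theorem 1. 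The functions $f_1,f_2$ should emerge precisely as the cross-ratios of the branch points of the relevant elliptic-like fibration, i.e.\ from solving the discriminant $q_1^4+q_2^4+q_3^4-2q_1^2q_3^2-2q_2^2q_3^2 = \lambda(q_1^2,q_2^2,q_3^2)$ (the Källén function), which is why the square root of that expression appears in both $f_1$ and $f_2$.

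**The main obstacle** will be pinning down the weight-$2$ part in the presence of the blow-up: in the generic-kinematics case of Theorem 1 the conjugate motivic periods are honest motivic Feynman amplitudes of contracted (bubble) graphs, but here — as the introduction warns — the conjugates need not be periods of subquotient graphs, so one cannot simply invoke a face-map identification. One must instead work directly with the blown-up geometry to determine which combination of Kummer motives actually appears, and crucially to show the answer is a combination of $\log^{\mathfrak{m}}(q_i^2/q_j^2)$ with coefficients in $k_S$ rather than something larger. I expect that the coefficients $a_1,a_2$ of these Kummer motives cannot be fixed by this method alone — they depend on a choice of splitting of the weight filtration / normalization of the blow-up and enter only through the Betti side — which is exactly why the statement leaves them as undetermined constants $a_i\in k_S$; establishing that the de Rham side is nonetheless rigidly $\bigl(\log^{\mathfrak{dr}}(f_1)+\log^{\mathfrak{dr}}(f_2)\bigr)\mathbb{L}^{\mathfrak{dr}}$, independent of these choices, is the real content and the delicate point of the proof.
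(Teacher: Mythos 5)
Your skeleton matches the paper's: blow up $\Pp^2$ at the three coordinate vertices forced by the motic subgraphs, establish that the resulting relative cohomology is mixed Tate with $\text{gr}^W \cong \mathbb{Q}(-2)\oplus\mathbb{Q}(-1)^{\oplus 2}\oplus\mathbb{Q}(0)$, use a residue to pin down the de Rham factors, and accept that the weight-$2$ Betti pairings are only determined up to constants $a_i\in k_S$ depending on the choice of basis. But there is a genuine gap at exactly the step you flag as the main obstacle: you never give a mechanism for identifying the left-hand tensor factors. Saying one should ``exhibit the sub-MHS generated by the $q_i^2$-ratios'' and ``work directly with the blown-up geometry'' is circular -- that the weight-$2$ motivic periods are $k_S$-combinations of $\log^{\mathfrak{m}}(q_j^2/q_k^2)$ (and not something larger) is precisely what must be proved, and since the face-map identification with bubble-graph amplitudes fails here, some concrete device is needed. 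The paper's device is the pullback $i^*$ to the motive with the auxiliary hyperplanes $F_1=V(\alpha_2+\alpha_3)$, $F_2=V(\alpha_1+\alpha_3)$ removed: there $H^2$ of the ambient open is pure $\mathbb{Q}(-2)$, so the pulled-back weight-$2$ basis classes become coboundaries modulo $1$-forms supported on the exceptional divisors $D_{-1},D_{-2}$, and pairing with $\sigma_G$ yields motivic logarithms of cross-ratios of four marked points on each exceptional $\Pp^1$, which evaluate to $q_j^2/q_k^2$. Without this (or an equivalent argument) your step (a) is an assertion, not a proof.

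Two further corrections. First, the massless triangle in $d=4$ with generic off-shell momenta is convergent; the blow-up is not forced by superficial divergence but by the failure of $Q\cup L\cup\Delta$ to be normal crossing (the quadric passes through the coordinate vertices of $\Delta$), and one must still check, as in Lemma \ref{pullback-lemma}, that $\pi_G^*\omega_G$ acquires no pole along the exceptional divisors. Second, the residue that determines the de Rham factors is taken along the strict transform $\tilde L$ of the Symanzik line $V(\Psi_G)$, landing in $H^1(\Pp^1\setminus\{f_0,f_1\},\{d_1,d_2,d_3\})$, whose de Rham periods are the cross-ratio logarithms containing the K\"all\'en square root; residues along the exceptional divisors see the motivic-side data instead, and there is no elliptic-like fibration anywhere -- $f_1,f_2$ are cross-ratios on a $\Pp^1$. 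Relatedly, the de Rham side is not ``rigidly'' the sum $\log^{\mathfrak{dr}}(f_1)+\log^{\mathfrak{dr}}(f_2)$: the residue argument only pins each de Rham conjugate down as a $k_S$-combination of the two cross-ratio logarithms, and the theorem's packaging pushes the residual ambiguity into $a_1,a_2$ on the motivic side, so the delicate content is the identification of both sides up to such constants rather than an exact normalization of the de Rham factor.
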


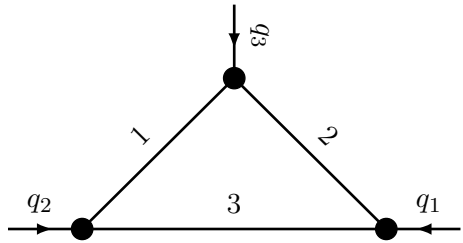
\begin{figure}[h]
    \centering
    \begin{tikzpicture}
    \SetGraphUnit{1}
  
    \SetUpEdge[lw = 1pt,
  color      = black,
  labelcolor = white,
  labelstyle = {sloped,above,yshift=2pt}]
  
  \SetUpVertex[FillColor=black, MinSize=8pt, NoLabel]

  \Vertex[x=4,y=2]{0}
  \Vertex[x=2,y=0]{1}
  \Vertex[x=6,y=0]{2}
  \Vertex[x=4,y=3,empty=true]{3}
  \Vertex[x=1,y=0,empty=true]{4}
  \Vertex[x=7,y=0,empty=true]{5}

  \Edge[label=1](0)(1)
  \Edge[label=2](0)(2)
  \Edge[label=3](2)(1)
  \tikzset{EdgeStyle/.style={postaction=decorate,decoration={markings,mark=at position 0.7 with {\arrow{latex}}}}}
  
  \Edge[label=$q_3$](3)(0)
  \Edge[label=$q_2$](4)(1)
  \Edge[label=$q_1$](5)(2)
 
    \end{tikzpicture}
    \caption{The triangle graph. Internal edges denoted with a single line have vanishing mass.}
    \label{fig:triangle}
\end{figure}

In the case of the previous theorem one cannot straightforwardly think of the motivic logarithms appearing in the coaction as motivic periods of the motives of quotient graphs of the triangle graph with all internal masses vanishing. This is because Feynman integrals associated to the one-loop graphs with two edges obtained by contracting an edge are divergent. In order to relate the motivic logarithms in the coaction in Theorem 2 to subquotient graphs, we must consider regularized versions of motivic periods of motives of subquotient graphs \cite[Conjecture 1.]{Brown2}. Alternatively, an approach taken in \cite{Brown2} is to associate to each graph a larger motive, called the \textit{affine motive} \cite[\S 8.5]{Brown2}, which is then proved to capture the motivic periods in the coaction applied to the motivic lift of the Feynman integral associated to the given graph \cite[Theorem 8.11]{Brown2}.

We note that we aim to make the computation of the coaction involving masses and momenta as explicit as possible. For that reason some moderately lengthy computations and bulky notation can be found in the second half of this paper.

\begin{subsubsection}*{Acknowledgements}
This project has received funding from the European Research Council (ERC) under the European Union’s Horizon 2020 research and innovation programme (grant agreement No. 724638). The author owes special thanks to Francis Brown for his encouragement to study the motivic Galois theory of Feynman periods in general, and many discussions and comments on this paper in particular. The author also thanks Erik Panzer for discussions, and in particular for his help with a computation in section 3 which compares the computations in this paper with those in the physics literature. Thanks are also owed to Clément Dupont and Nils Matthes for helpful discussions.
\end{subsubsection}
\end{subsection}

\end{section}
\begin{section}{Brief overview of the technical background}
We recall the definitions, constructions and main results on motivic periods and motivic Feynman amplitudes that we use throughout the article. This is a summary of the results of Brown \cite{Brown1}, \cite{Brown2}.
\begin{subsection}{Category of realizations}
\label{cat-of-realizations-and-mot-periods}

Let $S$ be a smooth geometrically connected scheme over $\mathbb{Q}$. And let $\mathcal{H}(S)$ be the category of triples $(\mathbb{V}_{\rm{B}},(\mathcal{V}_{\mathrm{dR}},\nabla),c)$, where $\mathbb{V}_{\rm{B}}$ is a local system of finite dimensional $\mathbb{Q}$-vector spaces on $S(\mathbb{C})$, and $(\mathcal{V}_{\mathrm{dR}},\nabla)$ is an algebraic vector bundle on $S$ with a flat connection with regular singularities at infinity. Note that for the definition of regular singularities at infinity one needs a good compactification of $S$, but it does not depend on the choice of a good compactification \cite{Katz}. Furthermore $\mathbb{V}_{\rm{B}}$ is equipped with an increasing \textit{weight} filtration $W_{\bullet}\mathbb{V}_{\rm{B}}$ of local sub-systems, and $(\mathcal{V}_{\mathrm{dR}},\nabla)$ is equipped by an increasing \textit{weight} filtration by algebraic sub-vector bundles with a flat connection with regular singularities at infinity $W_{\bullet}\mathcal{V}_{\mathrm{dR}}$ and a decreasing \textit{Hodge} filtration by algebraic sub-vector bundles $F^{\bullet}\mathcal{V}_{\mathrm{dR}}$ which satisfy Griffiths transversality
\[
\nabla : F^p\mathcal{V}_{\mathrm{dR}} \rightarrow F^{p-1}\mathcal{V}_{\mathrm{dR}} \otimes_{\mathcal{O}_S} \Omega^1_{S/k} .
\]
Finally, $c: (\mathcal{V}_{\mathrm{dR}},\nabla)^{an} \xrightarrow{\sim} \mathbb{V}_{\rm{B}} \otimes_{\mathbb{Q}} \mathcal{O}_{S^{an}}$ is an isomorphism of analytic vector bundles with connection which respects the weight filtration, where the connection on the right hand side is one for which the sections of $\mathbb{V}_{\rm{B}}$ are flat. We further require that $\mathbb{V}_{\rm{B}}$, equipped with the filtrations $W$ and $cF$, is a graded-polarizable variation of mixed Hodge structures. The morphisms in $\mathcal{H}(S)$ respect the above data.

The category $\mathcal{H}(S)$ is Tannakian, see \cite{Deligne-Tannakian}. It is equipped with the following fiber functors. For any simply connected $Z \subset S(\mathbb{C})$ let:
\[
\omega^Z_{B} : \mathcal{H}(S) \rightarrow \mathrm{Vec}_{\mathbb{Q}}
\]
be the fiber functor which sends a triple $(\mathbb{V}_{\rm{B}},(\mathcal{V}_{\mathrm{dR}},\nabla),c)$ to the $\mathbb{Q}$-vector space of sections $\Gamma(Z,\mathbb{V}_{\rm{B}})$. We also consider the fiber functor 
\[
\omega^{gen}_{\mathrm{dR}} : \mathcal{H}(S) \rightarrow \mathrm{Vec}_{k_S}
\]
where $k_S$ is the function field of $S$. It sends the vector bundle $(\mathcal{V}_{\mathrm{dR}},\nabla)$ to its fiber over the generic point of $S$.

\end{subsection}
\begin{subsection}{Families of \texorpdfstring{$\mathcal{H}(S)$}{H(S)}-periods}
The \textit{ring of $\mathcal{H}(S)$-periods} is defined as:
\[
\mathcal{P}^{\mathfrak{m},Z,gen}_{\mathcal{H}(S)} := \mathcal{O}(\textrm{Isom}^{\otimes}_{\mathcal{H}(S)}(\omega^{gen}_{\mathrm{dR}},\omega^Z_B)).
\]
It is a $\mathbb{Q}, k_S$-bimodule generated by equivalence classes of triples $((\mathbb{V}_{\rm{B}},(\mathcal{V}_{\mathrm{dR}},\nabla),c), \sigma,\omega)$, where $\mathcal{V}:=(\mathbb{V}_{\rm{B}},(\mathcal{V}_{\mathrm{dR}},\nabla),c) \in \textrm{ob}(\mathcal{H}(S))$, $\sigma \in (\omega^Z_B\mathcal{V})^{\vee}$, and $\omega \in \omega^{gen}_{\mathrm{dR}}\mathcal{V}$, modulo the relations:
\begin{equation}
    \begin{split}
        & (\mathcal{V},\lambda_1 \sigma_1 + \lambda_2 \sigma_2, \omega) \sim \lambda_1(\mathcal{V},\sigma_1, \omega) + \lambda_2 (\mathcal{V},\sigma_2, \omega) \\
        & (\mathcal{V},\sigma, \lambda_1\omega_1 + \lambda_2\omega_2) \sim \lambda_1(\mathcal{V},\sigma, \omega_1) + \lambda_2 (\mathcal{V},\sigma, \omega_2),
    \end{split}
\end{equation}
where $\lambda_1 \in \mathbb{Q}, \lambda_2 \in k_S$. Furthermore, for any morphism $\phi: \mathcal{V}_1 \rightarrow \mathcal{V}_2$ in the category $\mathcal{H}(S)$ we have:
\[
(\mathcal{V}_1,(\omega^{X}_B(\phi))^t(\sigma_2), \omega_1) \sim (\mathcal{V}_2,\sigma_2, \omega^{gen}_{\mathrm{dR}}(\phi)(\omega_1)),
\]
where $(\omega^{X}_B(\phi))^t$ denotes the dual morphism to $\omega^{X}_B(\phi)$. We denote the equivalence classes of triples by $[(\mathbb{V}_{\rm{B}},(\mathcal{V}_{\mathrm{dR}},\nabla),c), \sigma,\omega]^{\mathfrak{m}}$.

The ring $\mathcal{P}^{\mathfrak{m},Z,gen}_{\mathcal{H}(S)}$ is equipped with a \textit{period homomorphism}:
\begin{equation}
    \begin{split}
        \textrm{per}:\quad& \mathcal{P}^{\mathfrak{m},Z,gen}_{\mathcal{H}(S)} \rightarrow M_Z(S(\mathbb{C})) ,
    \end{split}
\end{equation}
where $M_Z(S(\mathbb{C}))$ denotes the ring of multivalued meromorphic functions on $S(\mathbb{C})$ with a prescribed branch along $Z$. To define it let $\pi: \widetilde{S}(\mathbb{C})_Z \rightarrow S(\mathbb{C})$ be the universal cover of $S(\mathbb{C})$ based at $Z$ (recall that $Z$ is simply connected). Since $\widetilde{S}(\mathbb{C})_Z$ is simply connected the local system $\pi^*(\mathbb{V}^{\vee}_{\rm{B}})$ is trivial, and $\sigma \in \Gamma(Z,\mathbb{V}_{\rm{B}}^{\vee})$ extends to a unique global section $\Gamma(\widetilde{S}(\mathbb{C})_Z, \pi^*(\mathbb{V}^{\vee}_{\rm{B}}))$. Let $x \in \widetilde{S}(\mathbb{C}_Z)$, and $N(x)$ be a small enough neighbourhood of $x$ so that the restriction of $\pi$ is an isomorphism. Define a local section $\sigma_x = (\pi|_{N(x)}^{-1})^*\sigma \in \Gamma(\pi(N(x)),\mathbb{V}_{\rm{B}}^{\vee})$.

Since $\omega \in \Gamma(\Sp(k_S),\mathcal{V}_{\mathrm{dR}})$, there exists an non-empty affine open $U \subset S$ such that $\omega \in \Gamma(U,\mathcal{V}_{\mathrm{dR}})$. Let $W \subset S$ be an affine open such that $\pi(x) \in W(\mathbb{C})$, and furthermore we can make $\pi(N(x)) \in W(\mathbb{C})$ by making $N(x)$ smaller if necessary. Since $S$ is irreducible $U \cap W \not = \emptyset$. The section $\omega|_{U \cap W}$ can have poles along $W \setminus U$, but we can 'clear denominators', i.e., there is $\alpha \in \mathcal{O}_W$ such that $\alpha\omega \in \Gamma(W,\mathcal{V}_{\mathrm{dR}})$. By restriction, and passing to the associated analytic vector bundle we can view $\alpha\omega$ as an element in $\Gamma(\pi(N(x)),\mathcal{V}^{an}_{\mathrm{dR}})$. Then the comparison isomorphism $c : \mathcal{V}^{an}_{\mathrm{dR}} \rightarrow \mathbb{V}_{\rm{B}} \otimes_{\mathbb{Q}}\mathcal{O}_{S^{an}}$ gives an element:
\[
\textrm{per}([(\mathbb{V}_{\rm{B}},(\mathcal{V}_{\mathrm{dR}},\nabla),c), \sigma,\alpha\omega]^{\mathfrak{m}}) = \sigma_x(c(\alpha\omega)) \in \Gamma(\pi(N(x)), \mathcal{O}_{S^{an}}),
\]
which can be viewed as a locally analytic function on $N(x)$.The period homomorphism is defined on $[(\mathbb{V}_{\rm{B}},(\mathcal{V}_{\mathrm{dR}},\nabla),c), \sigma,\omega]^{\mathfrak{m}}$ by dividing by the function $\alpha$. It locally has poles along the zeroes of $\alpha$.

This ring comes equipped with a \textit{connection} 
\[
\nabla : \mathcal{P}^{\mathfrak{m},Z,gen}_{\mathcal{H}(S)} \rightarrow  \mathcal{P}^{\mathfrak{m},Z,gen}_{\mathcal{H}(S)} \otimes_{k_S} \Omega_{k_S/k}^1
\]
which acts on families of motivic periods by:
\[
\nabla [\mathcal{V}, [\sigma], [\omega]]^{\mathfrak{m}} = [\mathcal{V}, [\sigma], \nabla [\omega]]^{\mathfrak{m}}.
\]
For details see \cite[\S 7.4]{Brown1}.

\end{subsection}
\begin{subsection}{Families of de Rham periods and coaction}
We define the ring of \textit{de Rham periods} as:
\[
\mathcal{P}^{\mathfrak{dr},gen}_{\mathcal{H}(S)} := \mathcal{O}(\textrm{Aut}^{\otimes}_{\mathcal{H}(S)}(\omega^{gen}_{\mathrm{dR}})).
\]
It is spanned, over $k_S$, by equivalence classes of triples $[(\mathbb{V}_{\rm{B}},(\mathcal{V}_{\mathrm{dR}},\nabla),c), \upsilon,\omega]^{\mathfrak{dr}}$, where $\upsilon \in (\omega^{gen}_{\mathrm{dR}}\mathcal{V})^{\vee}$, and $\omega \in \omega^{gen}_{\mathrm{dR}}\mathcal{V}$, defined analogously to $\mathcal{H}(S)$-periods. Furthermore, $\mathcal{P}^{\mathfrak{dr},Z,gen}_{\mathcal{H}(S)}$ is a Hopf algebra, and the ring $\mathcal{P}^{\mathfrak{m},Z,gen}_{\mathcal{H}(S)}$ has a right \textit{Galois coaction} by $\mathcal{P}^{\mathfrak{dr},Z,gen}_{\mathcal{H}(S)}$:
\[
\Delta : \mathcal{P}^{\mathfrak{m},Z,gen}_{\mathcal{H}(S)} \rightarrow \mathcal{P}^{\mathfrak{m},Z,gen}_{\mathcal{H}(S)} \otimes_{k_S} \mathcal{P}^{\mathfrak{dr},gen}_{\mathcal{H}(S)} ,
\]
given by the formula:
\begin{equation} 
\label{general-coaction}
\Delta [\mathcal{V},\sigma,\omega]^{\mathfrak{m}} = \sum\limits_{e_i} [\mathcal{V},\sigma,e_i]^{\mathfrak{m}} \otimes [\mathcal{V},e_i^{\vee},\omega]^{\mathfrak{dr}}
\end{equation}
where the $\{e_i\}$ is a basis of $\omega^{gen}_{\mathrm{dR}}\mathcal{V}$, and $e_i^{\vee}$ is the dual basis. Dual to the Galois coaction is the left \textit{Galois action} of the affine group scheme given by $G^{\mathfrak{dr},gen}_{\mathcal{H}(S)}:=\Sp\left(\mathcal{P}^{\mathfrak{dr},gen}_{\mathcal{H}(S)}\right)$:
\[
G^{\mathfrak{dr},egn}_{\mathcal{H}(S)} \times \mathcal{P}^{\mathfrak{m},Z,gen}_{\mathcal{H}(S)} \rightarrow \mathcal{P}^{\mathfrak{m},Z,gen}_{\mathcal{H}(S)}
\]
given by:
\begin{equation}
    \begin{split}
        g [\mathcal{V},\sigma,\omega]^{\mathfrak{m}} &= (1 \otimes g)\Delta [\mathcal{V},\sigma,\omega]^{\mathfrak{m}} = \sum\limits_{e_i} [\mathcal{V},\sigma,e_i]^{\mathfrak{m}} \otimes g \left([\mathcal{V},e_i^{\vee},\omega]^{\mathfrak{dr}}\right),
    \end{split}
\end{equation}
where $g \in G^{\mathfrak{dr},gen}_{\mathcal{H}(S)}(R)$, for $R$ a $k_S$-algebra.

\end{subsection}
\begin{subsection}{Motivic periods}
\label{motivic-periods}
We are particularly interested in objects of $\mathcal{H}(S)$ of a prescribed geometric origin. By this we mean the following. Recall that a simple normal crossing divisor is a normal crossing divisor such that each of its irreducible components is smooth. Assume, as before, that $S$ is a smooth geometrically connected scheme over $\mathbb{Q}$, and let $ D \subset X $ be a family of simple normal crossing divisors relative to a smooth morphism $\pi : X \ra S $ which, on the underlying analytic varieties, we assume to be a locally trivial fibration of stratified analytic varieties, for the stratification on $X$ induced by $D$ -- see  \cite{GM} and \cite[Ch. IV]{Pham}. Let $j : X \setminus D \hookrightarrow X$ be the inclusion, and denote by $D_i$ the irreducible components of $D$, for $i \in I$. Denote by $D_J = \cap_{j\in J} D_j$, for $\not 0 \not = J \subset I$, and let $D_{\emptyset} = X$. 

Because $\pi$ is a locally trivial fibration, the sheaf
\begin{equation}
\label{rel-cohom-definition}
H^n_{\rm{B}}(X,D)_{/S} := R^n\pi_*j_!\Q ,
\end{equation}
where $\mathbb{Q}$ is the constant sheaf on $(X \setminus D)(\mathbb{C})$, is a local system over $S(\mathbb{C})$ with its analytic topology. It is computed by the hypercohomology of the complex of sheaves:
\begin{equation}
\label{relative-double-complex-Betti}
\mathbb{Q}_{D_{\bullet}/S} : 
\mathbb{Q}_{X} \rightarrow \bigoplus_{|J|=1} \mathbb{Q}_{D_J} \rightarrow \bigoplus_{|J|=2} \mathbb{Q}_{D_J} \rightarrow ...
\end{equation}
where $\mathbb{Q}_{D_J}$ is the constant sheaf $\mathbb{Q}$ on $D_J(\mathbb{C})$ extended by 0 to $X(\mathbb{C})$.

 Let $\Omega_{D_J/S}^{\bullet}$ denote the sheaf on $X$ which is the direct image of the corresponding sheaves of Kähler differentials on $D_J$, and which vanishes outside of $D_J$. Consider the double complex of sheaves on $X$
\begin{equation}
\label{relative-double-complex}
\Omega_{D_{\bullet}/S}^{\bullet} : \quad \Omega_{X/S}^{\bullet} \rightarrow \bigoplus\limits_{|J|=1} \Omega_{D_J/S}^{\bullet} \rightarrow \ldots \rightarrow \bigoplus\limits_{|J|=d} \Omega_{D_J/S}^{\bullet}
\end{equation}
where $d$ is the relative dimension of $X$ over $S$, and horizontal maps are pullbacks along inclusions $D_{J} \hookrightarrow D_{J\setminus i_j}$ multiplied by $(-1)^k$ if $i_j$ is the $k$th element of $J$. We will denote such restrictions, including the signs, by $r^{J}_{i_j}$ and write simply $r_{i_j}$ when $J = \{i_j\}$. Then define
\[
H^n_{\mathrm{dR}}(X,D)_{/S} =\mathbb{R}^n\pi_*(\textrm{Tot}^{\bullet}(\Omega_{D_{\bullet}/S}^{\bullet})),
\]
where $\textrm{Tot}^{\bullet}$ denotes the total complex. It should have a flat connection
\[
\nabla : H^n_{\mathrm{dR}}(X,D)_{/S} \rightarrow H^n_{\mathrm{dR}}(X,D)_{/S} \otimes \Omega^1_{S/k} 
\]
by a relative version of \cite{KO}.

 By \cite[Proposition 2.28]{Deligne4}, using the assumption that $\pi$ is topologically trivial, we have an isomorphism:
\[
c^{-1} : H^n_{\rm{B}}(X,D)_{/S} \otimes_{\mathbb{Q}} \mathcal{O}^{an}_S \xrightarrow{\sim} (H^n_{\mathrm{dR}}(X,D)_{/S})^{an} .
\]

Another gap in the literature seems to be that $H^n_{\rm{B}}(X,D)_{/S}$, with its weight filtration and a Hodge filtration coming from $cH^n_{\mathrm{dR}}(X,D)_{/S}$,  should be a variation of mixed Hodge structure. Putting everything together, and admitting the above stated claims, we get an element 
\begin{equation}
\label{motivic-objects}
(H^n_{\rm{B}}(X,D)_{/S},H^n_{\mathrm{dR}}(X,D)_{/S},c)
\end{equation}
of the category $\mathcal{H}(S)$, which we denote $H^n(X,D)_{/S}$. We refer to the $\mathcal{H}(S)$-periods associated to such objects of $\mathcal{H}(S)$ as \textit{families of motivic periods}.

For the purposes of this paper we will define explicitly an open $S$ as a complement of a certain closed subset of an irreducible affine algebraic variety over which $H^n_{\rm{B}}(X,D)_{/S}$ will be a local system, and we will work with a family of divisors $D \subset X$ which is simple normal crossing over the generic point of the said irreducible affine algebraic variety. Therefore we will only consider the double complex \eqref{relative-double-complex} over the generic point.   

\begin{defi}
\label{face-map}
Let $D^I = \cup_{j \not \in I} D_j$, and $k=|I|$. \textit{Face maps} are morphisms in the category $\mathcal{H}(S)$
\begin{equation}
    H^{n-k}(D_I,D^I \cap D_I)_{/S} \rightarrow H^n(X,D)_{/S},
\end{equation}
defined by the inclusion of complexes $\Omega^{\bullet - k}_{D^I_{\bullet}/S} \rightarrow \Omega^{\bullet}_{D_{\bullet}/S}$, and $\mathbb{Q}_{D^I_{\bullet}/S}[-k] \rightarrow \mathbb{Q}_{D_{\bullet}/S}$, on the de Rham and Betti realizations respectively.
\end{defi}

\begin{subsubsection}{Mixed Tate Hodge structures}
\label{mixed Tate HS}
In this paper we will focus on a special case of families of motivic periods coming from objects of $\mathcal{H}(S)$ associated to variations of mixed Tate Hodge structures. To make this precise we recall a few definitions. A \textit{Tate Hodge structure} $\mathbb{Q}(-m)$ is the pure Hodge structure of weight $2m$ defined by 
\[
H_{\mathbb{Q}} = (2\pi i)^{-m}\mathbb{Q}, \quad H_{\mathbb{C}} = H^{m,m}(\mathbb{Q}(-m)_{\mathbb{C}}).
\]
We can pull back $\mathbb{Q}(-m)$ to $S$ via the structure map $S \rightarrow \Sp(\mathbb{Q})$ to obtain a constant \textit{variation of Tate structure}, denoted $\mathbb{Q}(-m)_{/S}$.

A mixed Tate Hodge structures are mixed Hodge structures $H$ such that the weight graded quotients are $gr^W_{2m}H = \bigoplus \mathbb{Q}(-m)$ and $gr^W_{2m+1}H = 0$. They are extensions of Tate Hodge structures. We will be working with objects of $\mathcal{H}(S)$ such that their fibers over $S(\mathbb{C})$ are mixed Tate Hodge structures. In this particular case, the choice of terminology where we refer to "motivic periods" is justified by the fact that the Hodge realization functor is fully faithful on mixed Tate motives over number fields \cite{DG}.
\end{subsubsection}

\begin{subsubsection}{Examples}
\label{examples}
Let $S = \Pp^1 \setminus \{0,1,\infty\}$, $X = S \times \mathbb{G}_m$, and $\pi:X \rightarrow S$ the projection. Let $x$ be the coordinate on $S$ and $y$ a coordinate on $\mathbb{G}_m$. Define $D=\{y=1\} \cup \{y=x\}$. We consider the object $\mathcal{V} = (H^1_B(X,D)_{/S},H^1_{\mathrm{dR}}(X,D)_{/S},c) \in \mathcal{H}(S)$.

We can choose $Z$ to be the real interval $(0,1)$ of $S(\mathbb{C})$, and define for all $x\in Z$ a cycle $\sigma_x \in \mathbb{G}_m(\mathbb{C})$ -- a straight line path from $1$ to $x$, which defines a class in $(\omega^Z_{B}(H^1(X,D)_{/S}))^{\vee}$. Note that in this case the differential form $\frac{dy}{y}$ defines a class in $\Gamma(S,(H_{\rm{dR}}^1(X,D)_{/S}))$, and we do not have to restrict to working over the generic point only. We define the motivic logarithm as a family of motivic periods
\[
\log^{\mathfrak{m}}(x) = \left[\mathcal{V},[\sigma_x],\left[\frac{dy}{y}\right]\right]^{\mathfrak{m}} \in \mathcal{P}^{\mathfrak{m},Z,{gen}}_{\mathcal{H}(S)}
\]
Its period is the logarithm
\[
\textrm{per}(\log^{\mathfrak{m}}(x)) = \int_{\sigma_x} \frac{dy}{y} = \log(x)
\]
for $x \in Z$. Later in this paper we will consider logarithms over a higher dimensional, but still irreducible affine, base than $\mathbb{P}^1 \setminus \{ 0,1,\infty \}$, for which case we must extend the definition of the motivic logarithm given above. However note that we will always be given a subset $Z$ of the complex points of our base over which the branch of the logarithm is unambiguous, and the definition is thus extended in an obvious way.

In order to lift $2\pi i$ to its motivic version it is enough to define it over $S = \Sp(\mathbb{Q})$, and it can then be pulled back to a constant family over $S' \rightarrow S$. Consider the object $H=(H^1_{\textrm{B}}(\mathbb{G}_m),H^1_{\mathrm{dR}}(\mathbb{G}_m),c) \in \textrm{ob}(\mathcal{H})$, where $\mathcal{H} = \mathcal{H}(\Sp(\mathbb{Q}))$, and let $\gamma_0$ be a positively oriented circle around 0. We have that $[\gamma_0] \in H^{\vee}_B$, and
\[
\mathrm{per}\left(\left[H,[\gamma_0],\left[\frac{dt}{t}\right]\right]^{\mathfrak{m}}\right) = \int_{\gamma_0} \frac{dt}{t} = 2 \pi i
\]
We denote
\[
\mathbb{L}^\mathfrak{m} = \left[H,[\gamma_0],\left[\frac{dt}{t}\right]\right]^{\mathfrak{m}} \in \mathcal{P}^{\mathfrak{m}}_{\mathcal{H}},
\]
where we have dropped $Z,gen$ from the notation since we are working over a point, and we refer to it as the \textit{Lefschetz motivic period}. We denote the constant family of motivic periods obtained by pulling back $\mathbb{L}^\mathfrak{m}$ to $S$ via its structure morphism by the same symbol.

We will also consider the de Rham verisons of these motivic periods. We choose a basis of the de Rham realization $\left\{\left[\frac{dy}{y}\right],\left[\frac{dy}{1-x}\right]\right\}$, and denote its dual basis $\left\{\left[\frac{dy}{y}\right]^{\vee},\left[\frac{dy}{1-x}\right]^{\vee}\right\}$. For the de Rham logarithm define
\[
\log^{\mathfrak{dr}}(x) = \left[\mathcal{V},\left[\frac{dy}{1-x}\right]^{\vee}, \left[\frac{dy}{y}\right]\right]^{\mathfrak{dr}} .
\]
This definition of the de Rham logarithm is motivated by it being the image of the motivic logarithm, defined earlier, under the \textit{de Rham projection} -- see \cite[\S 4, Example 4.5.1]{BD}. For the Lefschetz de Rham period define:
\begin{equation}
\label{dr-lefschetz}
\mathbb{L}^\mathfrak{dr}:= \left[H,\left[\frac{dt}{t}\right]^{\vee},\left[\frac{dt}{t}\right]\right]^{\mathfrak{dr}}
\end{equation}
where $H$ is as above. This can also be pulled back to a constant family of de Rham periods on $S$, which we denote by $\mathbb{L}^\mathfrak{dr}$ as well.
\end{subsubsection}
\end{subsection}

\begin{subsection}{Feynman integrals and their motivic lifts}
\label{motivic-lifts}
\begin{subsubsection}{Feynman integrals}
\begin{defi}
\label{feynman-graph-defi}
A Feynman graph is a multigraph $G$, defined by a triple:
\[
(V_G,E_G,E_G^{ext})
\]
where $V_G$ are the vertices, $E_G$ is the set of internal edges of the graph which are not oriented, with the endpoints of each element of $E_G$ encoded by a map $\partial: E_G \rightarrow \textrm{Sym}^2 V_G$, and $E_G^{ext}$ is the set of external half-edges, with the endpoint of each element of $E_G^{ext}$ encoded by a map $\partial: E_G^{ext} \rightarrow V_G$. We will only consider connected graphs. 
\end{defi}
To each internal edge $e \in E_G$ we assign its \textit{particle mass} $m_e \in \R$. To each external edge $i \in E_G^{ext}$ we assign a \textit{momentum}, which is a vector $q_i \in \R^d$ where $d \geq 0$ is the dimension of space-time. A condition on momenta $\sum_{i \in E_G^{ext}} q_i = 0$, called \textit{momentum conservation}, is assumed.  We write $G/e_i$ for the graph with the edge $e_i$ contracted, and $G/e_I$ for the graph with the edges $\{e_i\}_{i \in I}$ contracted. Denote by $M$ the number of non-zero masses of a Feynman graph and by $F$ the number of external momenta.

\begin{defi}
\label{first-and-second-symanzik}
Let $G$ be a Feynman graph. Associate to each internal edge $e \in E_G$ a variable $\alpha_e$. Then the \textit{first Symanzik polynomial} is a homogeneous polynomial defined to be
\[
\Psi_G = \sum\limits_{T\subset G} \prod\limits_{e \not \in T} \alpha_e
\]
where the sum is over all spanning trees $T$ of the graph $G$. We also define the following homogeneous polynomial
\[
\Phi_G(q) = \sum\limits_{T_1 \cup T_2 \subset G} (q^{T_1})^2 \prod\limits_{e \not \in T_1 \cup T_2} \alpha_e
\]
where the sum ranges over all spanning 2-trees $T = T_1 \cup T_2$ of $G$. A spanning 2-tree of a graph $G$ is a subgraph with 2 connected components, each of which is a tree, with the minimal number of edges such that its vertices include all of the vertices of the original graph $G$. We define $q^{T_1} = \sum_{i \in E_G^{ext}} q_i$ as the sum of all incoming momenta entering $T_1$. By momentum conservation $q^{T_1}=-q^{T_2}$. We denote the Euclidean scalar product of the vector $q \in \mathbb{R}^d$ with itself by $q^2$. The \textit{second Symanzik polynomial}, also homogeneous, is then defined to be:
\begin{equation}
\label{second-Symanzik}
\Xi_G(m,q) = \Phi_G(q) + \left(\sum\limits_{e \in E_G} m_e^2\alpha_e\right)\Psi_G.
\end{equation}
To abbreviate the dependencies of $\Xi_G$ on momenta and masses in the above definitions we write $q:=\{q_1,...,q_{F}\}$ and $m:=\{m_1,...,m_{M}\}$.
\end{defi}

Let $G$ be a Feynman graph with $N_G$ edges, $h_G$ loops. In parametric form, the Feynman integral which is of interest in physics is the following projective integral:
\begin{equation}
\label{parametric-form}
\Gamma\left(N_G - h_G\frac{d}{2}\right)\int_{\sigma} \omega_G(m,q)
\end{equation}
where
\begin{equation}
\omega_G(m,q) = \frac{1}{\Psi^{d/2}_G}\left(\frac{\Psi_G}{\Xi_G}\right)^{N_G - h_G d/2} \Omega_G
\end{equation}
and $\Psi_G$, and $\Xi_G(m,q)$ are the first and second Symanzik polynomials of $G$, respectively.

The domain of integration is:
\[
\sigma = \{ \left[\alpha_1:\ldots : \alpha_{N_G}\right] : \alpha_i \geq 0 \} \subset \Pp^{N_G - 1}(\mathbb{R})
\]
and 
\begin{equation}
\label{omega-def}
\Omega_G = \sum\limits_{i=1}^{N_G} (-1)^i\alpha_id\alpha_1 \wedge \ldots \wedge \widehat{d\alpha_i}\wedge \ldots \wedge d\alpha_{N_G}
\end{equation}
where $\widehat{d\alpha_i}$ means that we omit $d\alpha_i$. The derivation of this form of the Feynman integral from its momentum space representation using the Schwinger trick is nicely explained in \cite{PanzerPhD}.

Note that when $N_G = h_G\frac{d}{2}$ the $\Gamma$ prefactor of the previous integral will have a pole. A common regularization method in physics is to allow the number of space-time dimensions $d$ to vary, while keeping the variables $\alpha_i$ fixed, and then consider the Laurent expansion of \eqref{parametric-form} around a fixed value of $d$. For example, in this paper we want to consider integrals in $d=4$ dimensions, and dimensional regularization would amount to studying the Laurent expansion of \eqref{parametric-form} in $d=4-2\epsilon$ around the point $\epsilon=0$. If \eqref{parametric-form} diverges we will study its residue around the point $\epsilon=0$.
\begin{defi}
\label{FIntegral}
In order to consider two cases at once, we will consider the following projective integral:
\begin{equation}
\label{FIntegral-number}
I_G(m,q) = \int_{\sigma_G} \omega_G(m,q).
\end{equation}
\end{defi}
Therefore, if \eqref{parametric-form} converges we have simply dropped the prefactor which is a value of the Gamma function. If \eqref{parametric-form} does not converge then $I_G(m,q)$ is its residue in dimensional regularization.
\end{subsubsection}
\begin{subsubsection}{Motivic Feynman amplitudes}
In order to study their Galois theory we need to lift the functions $I_G(m,q)$ to families of motivic periods. Concretely, we need an element $mot_G \in \textrm{Ob}(\mathcal{H}(S))$, and a family of motivic periods
\[ \left[mot_G,[\sigma],[\omega]\right]^{\mathfrak{m}} \in \mathcal{P}^{\mathfrak{m},Z,gen}_{\mathcal{H(S)}},
\]
for a certain $S \subset K^{gen}_{F,M}$ to be defined immediately below, and $Z \subset S(\mathbb{C})$ as in \ref{motivic-periods}, such that
\[
\text{per}(\left[mot_G,[\sigma],[\omega]\right]^{\mathfrak{m}}) = I_G(m,q) \, .
\]
defines a multi-valued meromorphic function on $S(\mathbb{C})$.

The graph polynomial \eqref{second-Symanzik}, and therefore the Feynman integral, are invariant under the action of the orthogonal group in $d$ dimensions. Therefore they only depend on $s_{i,j} = s_{j,i} := q_i\cdot q_j$, the Euclidean product of $q_i,q_j$, $1 \leq i \leq j \leq F$. Recall that they also satisfy momentum conservation by assumption. Furthermore, if a graph $G$ has a vertex $v \in V$ such that the incoming momentum is non-trivial, i.e., $q^{\{v\}} \not = 0$, then the polynomial $\phi_G(q) \not = 0$ if $s_I := \sum_{i,j \in I} s_{i,j}  \not = 0$ for all $I \subsetneq \{1,\ldots,F\}$ \cite[Lemma 1.12]{Brown2}. If the previous condition holds along with $s_I + m_j^2 \not = 0$ for all $I \subsetneq \{1,\ldots,F\}$ and $j \in \{1,...,|E_G|\}$ then the  polynomial $\Xi_G(m,q)=0$ if and only if all internal masses and all external momenta are trivial \cite[Lemma 1.13]{Brown2}. This leads to the following definition. 
\begin{defi}
\label{space-of-kinematics-defi}
Define $K_{F,M} = \mathbb{A}^{\binom{F}{2}} \times \mathbb{G}^{M}_m$ to be the \textit{space of kinematics}. Let $K^{gen}_{F,M} \subset K_{F,M}$ be the open complement of the union of spaces $s_I + m^2_j = 0$, where $s_I = \sum_{i,j \in I} s_{i,j}$, for $I \subsetneq \{1,\ldots,F\}$, and $j \in \{0,1,\ldots M\}$, and we set $m_0$ = 0. We also define $U^{gen}_{F,M} \subset K^{gen}_{F,M}(\Cc)$ to be the region in $K^{gen}_{F,M}(\Cc)$ where $\Re(s_I) > 0$, and $\Re(m^2_j) > 0$, called the \textit{extended Euclidean sheet}. Denote by  $k_S = \text{Frac}(\mathcal{O}(K_{F,M}))$ the field of fractions of $\mathcal{O}(K_{F,M})$. Note that $k_S \cong \mathbb{Q}((s_{i,j})_{1 \leq i\leq j \leq F}, (m_k)_{1\leq k \leq M})$.
\end{defi}

The lifting of Feynman integrals to families of motivic periods is provided by Brown in \cite{Brown2} for Feynman graphs of any loop order. Key results for this lifting are certain factorization properties of Symanzik polynomials, which lead to the concept of \textit{motic subgraphs}. We recall the definition.
\begin{defi}
\label{motic-subgraphs}
A subgraph $\gamma$ is called \textit{mass-momentum spanning} if it contains all internal edges $e \in E_G$ which have non-vanishing mass $m_e$,  $\partial E_G^{ext} \subset V_{\gamma}$, and all the vertices $\partial E_G^{ext}$ all lie in a single connected component of $\gamma$.

A subgraph spanned by the edges $\gamma \subset E_G$, which we also denote by $\gamma$, is called \textit{motic} if, for every subgraph spanned by the edges  $\gamma^{\prime} \subsetneq E_{\gamma}$, which is mass-momentum spanning in $\gamma$, one has $h_{\gamma^{\prime}} < h_{\gamma}$, where we denote by $h_{\gamma}$ the loop number of $\gamma$. In other words, a subgraph $\gamma \subset G$ is motic if and only if removing any edge of $\gamma$ makes it non-mass-momentum spanning or reduces its loop number $h_{\gamma}$.
\end{defi}

Recall that $\alpha_e$ are projective coordinates. Let $\Delta := \bigcup_{e \in E_G}\Delta_e \subset  \Pp^{N_G-1}$, where $\Delta_e:=V(\alpha_e)  \subset \Pp^{N_G-1}$, and note that $\partial \sigma_G \subset \Delta$. Denote $\Delta_{\gamma} = \bigcap_{e \in E_{\gamma}} \Delta_e \subset  \Pp^{N_G-1}$, for a subgraph $\gamma \subset G$, and let $\Delta_{\emptyset} = \Pp^{N_G - 1}$. These schemes are defined a priory over $\Sp(\mathbb{Q})$, and we write, by abuse of notation, $\Pp^{N_G -1}$, $\Delta$, and $\Delta_{\gamma}$ for their base change to $K^{gen}_{F,M}$. 

By \cite[Proposition 6.2]{Brown2} a linear subspace $\Delta_{\gamma}$ corresponding to a motic subgraph $\gamma$ is contained in $V(\Xi_{G}(m,q))$, and if $\gamma$ is not mass-momentum spanning then $\Delta_{\gamma}$ is also contained in $V(\Psi_G)$, over each fiber over $K^{gen}_{F,M}$. Therefore, over each fiber, the boundary of the domain of integration meets the singularities of the integrand $\omega_G$, causing potential divergences. We blow up $\Pp^{N_G - 1}$ along the linear subspaces corresponding to motic subgraphs, at first blowing up those linear subspaces of dimension 0, then the strict transforms of linear subspaces of dimension one etc. Denote the blow-up by $\pi_G : P^G \rightarrow \Pp^{N_G-1}$. Let $X_G=V(\Xi_G(m,q))\bigcup V(\Psi_G), X_G^{\prime} = V(\Xi_G(m,q))$, and $Y_G, Y_G^{\prime}$ be their strict transforms. Let $D = \pi_G^{-1}(\Delta)$. 

We recall the definition of a Landau variety following \cite[Ch. IV]{Pham}, based on stratified Morse theory \cite{GM}. Consider the underlying analytic varieties of the pair $(P^G,D \bigcup Y_G)$. Then the divisor $Y = D \bigcup Y_G$ gives rise to a stratification of $P^G$
\[
P^G  \supset Y \supset Y^{(1)} \supset Y^{(2)} \supset \ldots
\]
where $Y^{(k)}$ is the skeleton of $D$ of codimension $k$. The open strata $Y^{(k)}\setminus Y^{(k+1)}$ are smooth, and the boundary of each irreducible component of $Y^{(k)}\setminus Y^{(k+1)}$, denoted $A_k$, has the property that the boundary $\bar{A_k} \setminus A_k$ is a union of strata of lower dimension. Furthermore they satisfy Whitney's conditions A and B. Define the critical set of $A_k$ to be the set where $\pi$ fails to be submersive:
\[
cA_k := \{ x \in A_k | \rk T_x\pi < \dim K^{gen}_{F,M} \} .
\]
\begin{defi}
Define the Landau variety $L_G$ to be the codimension 1 part of $\pi(\cup_{i} cA_i)$, where the union is over all the strata of $Y$.
\end{defi}

Note that we have been working with underlying analytic varieties of $P^G, Y$, and by \cite[Lemma 5.2, Corollary 5.3]{Pham} each $\pi(\cup_{i} cA_i)$ is a complex analytic set. In fact, the same proof as that of Lemma 5.2 tells us that it is in fact an algebraic variety, since it is expressed therein in terms of minors of a matrix of partial derivatives of local equations for $X_G$ and $D$, which are algebraic. Then by Thom's Isotopy theorem $\pi$ is a locally trivial fibration of stratified analytic varieties on the complement of $L_G$.

Let $S = K^{gen}_{F,M} \setminus L_G$. Note that $P^G$ is smooth and $D \cup Y_G$ is generically a simple normal crossing divisor in $P^G$. This means that we are in the situation set up in \ref{motivic-periods}. Next we write down a canonical Betti class over $Z = U^{gen}_{F,M}$, and a de Rham class over the generic point.

\begin{defi} The \textit{graph motive} of a Feynman graph $G$ is defined as
\label{graph-motive}
\[
mot_G = \left(H^{N_G-1}_B\left(P^G \setminus Y_G,D \setminus Y_G \cap D \right)_{/S},H^{N_G-1}_{\mathrm{dR}}\left(P^G \setminus Y_G, D \setminus Y_G \cap D\right)_{/S},c \right) \in \textrm{ob}(\mathcal{H}(S)).
\]
We also define
\[
mot_G^{\prime} = \left(H^{N_G-1}_B\left(P^G \setminus Y_G^{\prime},D \setminus Y_G^{\prime} \cap D \right)_{/S},H^{N_G-1}_{\mathrm{dR}}\left(P^G \setminus Y_G^{\prime}, D \setminus Y_G^{\prime} \cap D\right)_{/S},c \right) \in \textrm{ob}(\mathcal{H}(S)).
\]
\end{defi}

Let $\pi_G : P^G \rightarrow \mathbb{P}^{N_G-1}$ be the blow up along $\Delta_{\gamma}$ where these are viewed as schemes over $\Sp(\mathbb{Q})$, and let $\widetilde{\sigma}$ be the closure in the analytic topology of $\pi_G^{-1}(\accentset{\circ}{\sigma})$, where $\accentset{\circ}{\sigma}$ is the real open simplex given by $\alpha_e > 0$. We define a constant family of manifolds with corners over $U^{gen}_{F,M}$:
\[
\sigma_G = \widetilde{\sigma} \times U^{gen}_{F,M}.
\]
Then \cite[Theorem 6.7]{Brown2} tells us that
\[
\sigma_G \cap Y_G(\mathbb{C}) = \emptyset.
\]
It uses the fact that the coefficients of $\Psi_G$ are all positive and that $\Re(\Xi_G(m,q)) > 0$ when $\alpha_i > 0$ and $(m,q) \in U^{gen}_{F,M}$, i.e., parameters have positive real parts. Then we have
\[
[\sigma_G] \in \Gamma(U^{gen}_{F,M}, (mot_G)_B^{\vee}) 
\]
as desired.

Finally, for a general definition of a motivic Feynman amplitude, one needs to prove that the pull-back of the differential form $\pi_G^*(\omega_G(m,q))$ doesn't acquire any new poles along the exceptional divisors after blowing up. For a general criterion in terms of the degrees of divergence of motic subgraphs see \cite[\S 6.6]{Brown2}. We do not need this result in full generality, and will check for poles directly in section \ref{triangle}, when we first encounter motic subgraphs and having to blow-up. When this is satisfied $\pi_G^*(\omega_G(m,q))$ is a global section of $\Omega^{N_G-1}_{P^G \setminus Y_G/k_S}$ and defines a class
\[
[\pi_G^*(\omega_G(m,q))] \in \Gamma(\Sp(k_S), (mot_G)_{\mathrm{dR}})
\]
as required.
\begin{defi}
\label{motivic-Feynman-amplitude}
The \textit{motivic Feynman amplitude} is the family of motivic periods:
\[
I_G^{\mathfrak{m}}(m,q) = \left[mot_G, [\sigma_G],[\pi_G^*(\omega_G(m,q))]\right]^{\mathfrak{m}} \in \mathcal{P}^{\mathfrak{m},U^{gen}_{F,M},gen}_{\mathcal{H}(S)} 
\]
When we have $[\pi_G^*(\omega_G(m,q))] \in \Gamma(\Sp(k_S), (mot_G')_{\mathrm{dR}})$, and $[\sigma_G] \in \Gamma(U^{gen}_{F,M}, (mot_G')_B^{\vee})$ (for example when $N_G\geq (h_G+1)\frac{d}{2}$) the motivic Feynman amplitude can be considered as a motivic period of the motive $mot_G'$. In this case we denote it by the same symbol:
\[
I_G^{\mathfrak{m}}(m,q) = \left[mot_G', [\sigma_G],[\pi_G^*(\omega_G(m,q))]\right]^{\mathfrak{m}} \in \mathcal{P}^{\mathfrak{m},U^{gen}_{F,M},gen}_{\mathcal{H}(S)} .
\]
In what follows we will make it clear if we are working with $mot_G$ or $mot_G'$.

When the motive of a Feynman graph is mixed Tate (see \ref{mixed Tate HS}), as will be the case in the rest of this paper, we define the \textit{de Rham Feynman amplitude} as:
\[
I_G^{\mathfrak{dr}}(m,q) = \left[mot_G, \epsilon,[\pi_G^*(\omega_G(m,q))]\right]^{\mathfrak{dr}} \in \mathcal{P}^{\mathfrak{m},U^{gen}_{F,M},gen}_{\mathcal{H}(S)} ,
\]
where $\epsilon :  \omega^{gen}_{\rm{dR}}(mot_G)_{\rm{dR}} \rightarrow \omega^{gen}_{\rm{dR}}W_0(mot_G)_{\rm{dR}}$. This map is simply the projection to the weight 0 part. Note that the weight filtration on the de Rham realization is split since the motive is Mixed Tate.
\end{defi}

\end{subsubsection}
\end{subsection}
\end{section}

\begin{section}{Motives of one-loop graphs}
We specialize the previous discussion to the case of one-loop cycle graphs with $N_G$ internal edges, and one external edge attached to each vertex, in $d=4$ dimensions. In this case, the second Symanzik polynomial $\Xi_G$ is homogeneous quadratic in the $\alpha_i$, and the first Symanzik polynomial $\Psi_G$ is linear in the $\alpha_i$. The integral of interest is:
\begin{equation}
\label{one-loop-amplitude}
I_G(m,q) = \int\limits_{\sigma}  \frac{\Psi_G^{N_G - 4}}{\Xi_G^{N_G-2}} \Omega_G .
\end{equation}

\begin{subsection}{Four and more internal edges}
Let $N_G \geq 4$, and all masses and momenta non-vanishing, i.e., $F = M = N_G$. Note that in this case the polar locus of the integrand does not include the first Symanzik polynomial, therefore we can work with the restricted motive $mot_G^\prime$.

In the case of non-vanishing masses and momenta, there are no motic subgraphs (see definition \ref{motic-subgraphs}) of a one-loop graph. In particular there is no need to blow up, and denoting by $Q$ the vanishing locus of $\Xi_G$, and following the last section we have:
\[
mot_G' = (H^{N_G-1}_{\mathrm{dR}}(\Pp^{N_G-1} \setminus Q, \Delta \setminus Q \cap \Delta)_{/S}, H^{N_G-1}_{\mathrm{dR}}(\Pp^{N_G-1} \setminus Q, \Delta \setminus Q \cap \Delta)_{/S},c),
\]
and 
\[
[\omega_G] \in \Gamma(\Sp(k_S),(mot_G')_{\mathrm{dR}}), \quad [\sigma_G] \in \Gamma(U^{gen}_{F,M},((mot_G')_{B})^{\vee}).
\]
The associated motivic Feynman amplitude is:
\begin{defi}
\begin{equation}
\label{one-loop-motivic-feynman-ampl-defi}
I_G^{\mathfrak{m}}(m,q) = \left[mot_G', [\sigma_G],[\omega_G]\right]^{\mathfrak{m}} \in \mathcal{P}^{\mathfrak{m},U^{gen}_{F,M},gen}_{\mathcal{H}(S)} 
\end{equation}
\end{defi}

\end{subsection}

\begin{subsection}{Semi-simplification of the motive}

\begin{lemma}
\label{absolute-grps}
Let $Q \subset \Pp^n$ be a smooth quadric hypersurface. Then 
\begin{equation}
H^q(\Pp^{n} \setminus Q, \mathbb{Q}) \cong
\begin{cases}
\mathbb{Q}(-m), & \text{if } q = n = 2m-1  \\
\mathbb{Q}(0), & \text{if } q = 0 \\
0, & \text{otherwise}
\end{cases}
\end{equation}
\end{lemma}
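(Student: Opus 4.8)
The plan is to compute $H^q(\Pp^n \setminus Q, \Q)$ by combining the long exact sequence of the pair $(\Pp^n, \Pp^n \setminus Q)$ with Poincaré--Lefschetz duality, using that both $\Pp^n$ and $Q$ have well-known cohomology. First I would invoke the long exact sequence of the complement,
\[
\cdots \to H^q_Q(\Pp^n, \Q) \to H^q(\Pp^n, \Q) \to H^q(\Pp^n \setminus Q, \Q) \to H^{q+1}_Q(\Pp^n, \Q) \to \cdots,
\]
where $H^\bullet_Q$ denotes cohomology with supports along $Q$. Since $Q$ is a smooth hypersurface in the smooth variety $\Pp^n$, the Thom isomorphism (or the Gysin sequence) identifies $H^q_Q(\Pp^n, \Q) \cong H^{q-2}(Q, \Q)(-1)$, so the sequence becomes the Gysin sequence
\[
\cdots \to H^{q-2}(Q, \Q)(-1) \xrightarrow{\;i_*\;} H^q(\Pp^n, \Q) \xrightarrow{\;j^*\;} H^q(\Pp^n \setminus Q, \Q) \to H^{q-1}(Q, \Q)(-1) \to \cdots.
\]
All maps here are morphisms of Hodge structures (or of objects of $\mathcal{H}(\Sp(\Q))$), which will give the Tate twists in the statement automatically.

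Next I would feed in the cohomology of a smooth quadric. For $n = 2m$ (so $Q$ has odd dimension $n-1 = 2m-1$), $H^\bullet(Q,\Q)$ agrees with that of $\Pp^{n-1}$: it is $\Q(-k)$ in each even degree $2k$ for $0 \le k \le n-1$ and zero in odd degrees. For $n = 2m-1$ (so $Q$ has even dimension $n - 1 = 2m-2$) the cohomology of $Q$ is the same except in the middle degree $n-1 = 2m-2$, where it has rank $2$, i.e. $\Q(-(m-1))^{\oplus 2}$. In either case the Gysin map $i_* : H^{q-2}(Q)(-1) \to H^q(\Pp^n)$ is, via the projection formula, essentially multiplication by the class of $Q$, which is $2$ times the hyperplane class $\xi$; tracking this through, $i_*$ is an isomorphism onto the image of $\xi \smile (-)$ in the relevant degrees. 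A short diagram chase then kills $H^q(\Pp^n \setminus Q, \Q)$ in all degrees except $q = 0$, where it is $\Q(0)$, and the top degree $q = n$. When $n = 2m$ is even, $H^n(Q,\Q)(-1) = \Q(-m)$ surjects onto $H^n(\Pp^n,\Q) = \Q(-m)$ (the map is multiplication by $2$, an isomorphism of one-dimensional spaces), and $H^{n-1}(Q,\Q)(-1)$ lives in degree $n-1$, also hit by $\xi$; so $H^n(\Pp^n\setminus Q) = 0$, consistent with the ``otherwise'' case. When $n = 2m-1$ is odd, $H^n(\Pp^n,\Q) = 0$, and the surviving contribution comes from the rank-$2$ middle cohomology of $Q$: $H^{n-1}(Q,\Q)(-1) = \Q(-m)^{\oplus 2}$ maps into $H^{n+1}(\Pp^n) = 0$, while $H^{n-1}(\Pp^n)(-1) = \Q(-m)$ maps in via $i_*$, whose cokernel is one copy of $\Q(-m)$; that cokernel is exactly $H^n(\Pp^n \setminus Q, \Q) \cong \Q(-m)$.

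Rather than the Gysin sequence I could equally well argue by Poincaré--Lefschetz duality $H^q(\Pp^n \setminus Q, \Q) \cong H_{2n-q}^{\mathrm{BM}}(\Pp^n \setminus Q, \Q) \cong H_{2n-q}(\Pp^n, Q; \Q)$ and use the long exact sequence of the pair $(\Pp^n, Q)$ together with the known maps $H_\bullet(Q) \to H_\bullet(\Pp^n)$; the two approaches are dual and yield the same bookkeeping. The only genuinely delicate point is pinning down the Gysin (equivalently restriction) map precisely enough to see it is an \emph{isomorphism} in the middle range and not merely injective or surjective --- this rests on the fact that $[Q] = 2\xi$ with $2$ invertible in $\Q$, so the relevant one-dimensional maps are nonzero hence isomorphisms, and on the Lefschetz hyperplane theorem identifying the primitive part of $H^\bullet(Q)$. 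I expect that verification of the middle-degree map to be the main obstacle; everything else is a formal consequence of the Gysin sequence and the standard computation of $H^\bullet(Q,\Q)$. Since all the objects and maps are motivic (they come from the category $\mathcal{H}(\Sp \Q)$), the Tate structures claimed in the statement follow from the Hodge-theoretic weights, completing the proof.
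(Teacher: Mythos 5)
Your plan follows essentially the same route as the paper: the Gysin long exact sequence for the smooth hypersurface $Q \subset \Pp^n$, fed with the standard cohomology of a smooth quadric, with the Tate twists coming along automatically because all maps are morphisms of (mixed) Hodge structures. The one real difference is how the crucial middle-degree map is controlled: you use $[Q]=2\xi$ and the Lefschetz hyperplane theorem to see the relevant Gysin map is surjective, whereas the paper gets surjectivity from Artin vanishing --- $\Pp^n \setminus Q$ is affine, being a closed subvariety of $\Pp^N \setminus H$ via the Veronese embedding, so $H^q(\Pp^n\setminus Q)=0$ for $q>n$ --- and then invokes Poincar\'e duality for the remaining degrees. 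Both mechanisms work; yours is more hands-on with the ring structure, the paper's avoids it.

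However, your bookkeeping in the odd case $n=2m-1$ contains a genuine error: $H^{n+1}(\Pp^n)$ is \emph{not} zero, since $n+1=2m$ is even, so $H^{n+1}(\Pp^n)\cong\Q(-m)$. If it really vanished, the exact sequence $0=H^{n}(\Pp^n)\to H^{n}(\Pp^n\setminus Q)\to H^{n-1}(Q)(-1)\xrightarrow{G} H^{n+1}(\Pp^n)$ would force $H^{n}(\Pp^n\setminus Q)\cong\Q(-m)^{\oplus 2}$, contradicting the statement. The correct step is: $H^{n}(\Pp^n\setminus Q)=\ker G$, and $G$ is surjective onto $\Q(-m)$ because $G(i^{*}\xi^{m-1})=\xi^{m-1}\smile[Q]=2\xi^{m}\neq 0$ --- exactly the mechanism you flag in your ``delicate point'' paragraph --- so the kernel is a single copy of $\Q(-m)$. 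Your alternative description of the answer as the cokernel of $H^{n-1}(\Pp^n)(-1)\to H^{n-1}(Q)(-1)$ is not what the Gysin sequence yields (it gives a kernel); the cokernel description belongs to the dual picture via compactly supported cohomology or the pair $(\Pp^n,Q)$ plus Poincar\'e duality, which you mention as an alternative and which is fine if run consistently. There is also a small indexing slip in the even case: the group mapping onto $H^{n}(\Pp^n)$ is $H^{n-2}(Q)(-1)$, not $H^{n}(Q)(-1)$. These are fixable slips rather than missing ideas --- your $[Q]=2\xi$ remark supplies the needed surjectivity --- but as literally written the middle-degree computation would fail.
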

\begin{proof}
Recall that for a quadric $Q$ of dimension $d$ we have $H^{2q}(Q, \mathbb{Q}) \cong \mathbb{Q}(-m)$ for $1\leq q \leq d$ if $d$ odd. If $d=2l$ is even then $H^{d}(Q, \mathbb{Q}) \cong \mathbb{Q}(-l) \oplus \mathbb{Q}(-l)$. Let $n=2m-1$ and consider the Gysin long exact sequence:
\[
\ldots H^{n}\left(\mathbb{P}^{n}\right) \rightarrow H^{n} \left(\mathbb{P}^{n} \setminus Q \right) \xrightarrow{\textit{res}} H^{n-1}(Q)(-1) \xrightarrow{G} H^{n+1}\left(\mathbb{P}^{n}\right) \rightarrow H^{n+1}\left(\mathbb{P}^{n} \setminus Q\right) \rightarrow \ldots
\]
Note that $H^{n-1}(Q)(-1) \cong \mathbb{Q}(-m) \oplus \mathbb{Q}(-m)$ and that the Gysin morphism $G$ is surjective since $\mathbb{P}^{n} \setminus Q$ is a closed subset of dimension $n$ of the affine space $\mathbb{P}^{N} \setminus H$, where $H$ is a hyperplane, and the closed embedding is given by the Veronese embedding $i:\mathbb{P}^n \rightarrow \mathbb{P}^{N}$, where $N = \binom{n+2}{n}-1$. Since $H^{n}\left(\mathbb{P}^{n}\right) = 0 $ we have that $H^n\left(\mathbb{P}^{n} \setminus Q \right) \cong \mathbb{Q}(-m)$. From the same long exact sequence we can see that $H^{n+k}(\mathbb{P}^n \setminus Q) = 0 $ when $0 \leq k \leq n$ because in that case $H^{n+k}(Q)(-1) \cong H^{n+k}(\mathbb{P}^n)$, and $H^{n+k}(\mathbb{P}^n) = 0$ if $k$ even and $H^{n+k-1}(Q) = 0$ if $k$ is odd. By Poincar\'e duality we get the remaining cohomology. The result is proved analogously for $n=2m$ even.
\end{proof}

Recall that the rank of a quadric $Q = V(f) \subset \mathbb{P}^n$, where $f$ is a homogeneus polynomial of degree 2, is the rank of the matrix $C$ where $f = \Vec{\alpha}C\Vec{\alpha}^t$, and $\alpha$ is the row vector $(\alpha_1,\ldots,\alpha_{n+1})$. Let $\Delta_i = V(\alpha_i)$, $\Delta_I = \cap_{i \in I} \Delta_i $, and $\Delta_{\emptyset} = \Pp^n$. The rank of $Q=V(\Xi_G(m,q)) \subset \mathbb{P}^{N_G-1}$, for $N_G \geq 6$, in $d=4$ space-time dimensions is at most 6, and it is exactly 6 for generic values of masses and momenta \cite[Lemma 6.3]{BK}, i.e., the locus where the rank is strictly less than 6 is in the Landau variety $L_G$. When $N_G \leq 5$ the quadric $Q$ is generically smooth. In order to determine the weight graded pieces of $mot_G'$, we first prove the following result, which applies to each fiber of $mot_G'$ over $S(\mathbb{C})$.

\begin{prop}
\label{weight-graded-pieces}
Let $n \geq 3$, and $\Delta \subset \Pp^n$ be the standard simplex. Let $Q \subset \Pp^n$ be a quadric of rank $min(6,n+1)$ with $Q$ and $\Delta$ in general position. Then the weight graded pieces of the  motive $ H = H^n(\Pp^n \setminus Q, \Delta \setminus Q \cap \Delta)$ are:
\begin{equation}
\begin{split}
\text{gr}^W_0 H = \mathbb{Q}(0), \quad \text{gr}^W_{2}H = \bigoplus_{\binom{n+1}{n-1}} \mathbb{Q}(-1), \quad \text{gr}^W_{4}H = \bigoplus_{\binom{n+1}{n-3}} \mathbb{Q}(-2) .
\end{split}
\end{equation}
When $n \leq 4$ all the other weight-graded pieces, except for these three, vanish. When $n \geq 5$ we also have:
\begin{equation}
\text{gr}^W_{6}H = \bigoplus_{\binom{n+1}{n-5} - \binom{n+1}{n-6}} \mathbb{Q}(-3) ,
\end{equation}
and all the others vanish.
\end{prop}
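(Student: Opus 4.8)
The plan is to compute the cohomology of the pair $H^n(\Pp^n \setminus Q, \Delta \setminus Q \cap \Delta)$ by means of the long exact sequence of the pair, together with a combinatorial induction over the faces of the simplex $\Delta$. The key point is that $\Delta \setminus Q \cap \Delta$ is the union of the hyperplane sections $\Delta_i \setminus Q \cap \Delta_i$, and each intersection $\Delta_I \setminus Q \cap \Delta_I$ is (since $Q$ is in general position with respect to $\Delta$) of the form $\Pp^{n-|I|} \setminus Q'$ for a quadric $Q'$ in $\Pp^{n-|I|}$ whose rank is $\min(\mathrm{rank}\,Q, n-|I|+1) = \min(6, n-|I|+1)$ — i.e. again a quadric satisfying the hypothesis of Lemma \ref{absolute-grps} in the appropriate ambient dimension. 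So the two inputs are Lemma \ref{absolute-grps} (which gives the cohomology of each stratum) and the Mayer--Vietoris / simplicial spectral sequence computing $H^*(\Delta \setminus Q \cap \Delta)$ from the pieces $H^*(\Delta_I \setminus Q \cap \Delta_I)$.

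First I would set up the long exact sequence
\[
\cdots \to H^{k-1}(\Delta \setminus Q \cap \Delta) \to H^k(\Pp^n \setminus Q, \Delta \setminus Q \cap \Delta) \to H^k(\Pp^n \setminus Q) \to H^k(\Delta \setminus Q \cap \Delta) \to \cdots
\]
and invoke Lemma \ref{absolute-grps} for the term $H^k(\Pp^n \setminus Q)$: only $H^0 = \Q(0)$ and, when $n$ is odd, $H^n = \Q(-m)$ are nonzero. Hence for the relative group in degree $n$ the essential contribution comes from $H^{n-1}(\Delta \setminus Q \cap \Delta)$ modulo the image of $H^{n-1}(\Pp^n \setminus Q)$ (which vanishes for $n \geq 3$), extended by the kernel of $H^n(\Pp^n\setminus Q) \to H^n(\Delta \setminus Q\cap\Delta)$. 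Next I would compute $H^{n-1}(\Delta \setminus Q \cap \Delta)$ using the spectral sequence of the closed cover by the faces $\Delta_I$, whose $E_1$-page has terms $\bigoplus_{|I| = p+1} H^{q}(\Delta_I \setminus Q \cap \Delta_I)$. By Lemma \ref{absolute-grps} applied in $\Pp^{n-p-1}$, the group $H^q(\Delta_I \setminus Q\cap\Delta_I)$ is $\Q(0)$ for $q=0$, is $\Q(-m_{n-p-1})$ when $q = n-p-1$ is odd and equals the ambient dimension, and is $0$ otherwise. Tracking these along antidiagonals $p + q = n-1$ and $p+q = n$, and grouping by weight, the $\Q(0)$'s assemble (via the reduced cohomology of the simplex, which is acyclic) to give $\mathrm{gr}^W_0 H = \Q(0)$, while the "top" classes $\Q(-m_{n-p-1})$ — which occur for $|I| = p+1$ with $n-p-1 = 1, 3, 5$, i.e. $p+1 = n-1, n-3, n-5$ — produce exactly the claimed copies of $\Q(-1)$, $\Q(-2)$, $\Q(-3)$ indexed by $\binom{n+1}{n-1}$, $\binom{n+1}{n-3}$, and in the $\Q(-3)$ case by $\binom{n+1}{n-5} - \binom{n+1}{n-6}$ (the correction term coming from the one nonzero differential, or equivalently from the surjectivity statement in Lemma \ref{absolute-grps}). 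The dimension hypotheses $n \leq 4$ versus $n \geq 5$ precisely control whether the $\Q(-3)$ stratum ever appears.

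The main obstacle I anticipate is bookkeeping in the face spectral sequence: one must check that the only nontrivial differential is the one that kills part of the weight-$6$ piece (accounting for the $-\binom{n+1}{n-6}$), and that no higher differentials or extension problems perturb the ranks. This is where the rank-$6$ hypothesis and general position are used — they guarantee that each $\Delta_I \setminus Q\cap\Delta_I$ has a quadric of the maximal allowed rank, so Lemma \ref{absolute-grps} applies uniformly and the only "defect" is the Gysin/Veronese surjectivity already isolated in that lemma. A clean way to organize this is to note that the whole computation is a variation of mixed Tate type, so it suffices to compute the pieces weight by weight: the weight-$0$ part is the reduced cohomology of the nerve of the cover (acyclic), and each weight-$2m$ part is computed by the subcomplex of the $E_1$-page consisting of the "top cohomology of odd-dimensional strata" terms, whose cohomology is elementary to evaluate. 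Finally I would remark that Poincaré--Lefschetz duality gives a consistency check, and that the $n$ odd correction (the extra $\Q(-m)$ in $H^n(\Pp^n\setminus Q)$) is absorbed into $\mathrm{gr}^W$ without changing the stated multiplicities because it contributes in the same weight as terms already counted.
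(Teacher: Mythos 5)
Your overall skeleton -- the long exact sequence of the pair combined with the closed-cover spectral sequence for $\Delta \setminus Q \cap \Delta$ -- is just a repackaging of the relative cohomology spectral sequence used in the paper, so the outline is fine. The genuine gap is in the key computational input: you claim that every stratum $\Delta_I \setminus Q \cap \Delta_I$ is the complement of a quadric ``satisfying the hypothesis of Lemma \ref{absolute-grps}'' and that this lemma ``applies uniformly''. Lemma \ref{absolute-grps} is stated for \emph{smooth} quadrics. On a face of dimension $\geq 6$ (and on $\Pp^n$ itself when $n \geq 6$) the restricted quadric has rank $6$, strictly less than ambient dimension plus one, so it is a singular cone and the lemma does not apply to it. The missing step, which the paper supplies, is that a rank-$6$ quadric in $\Pp^N$ with $N \geq 6$ is a generalized cone over a smooth $Q_0 \subset \Pp^5$, and its complement is an $\mathbb{A}^{N-5}$-bundle over $\Pp^5 \setminus Q_0$, so $H^i(\Pp^N \setminus Q) \cong H^i(\Pp^5 \setminus Q_0)$, nonzero only in degrees $0$ and $5$. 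This fact does two things your proposal cannot do without it: (a) it kills all would-be contributions of weight $\geq 8$ coming from odd-dimensional faces of dimension $\geq 7$, which is the actual reason the weights stop at $6$; and (b) it makes the $H^5$ of the \emph{six-dimensional} faces (and, when $n=6$, $H^5(\Pp^6 \setminus Q)$ itself) nonzero and equal to $\Q(-3)$, and these are precisely the source of the correction $-\binom{n+1}{n-6}$ in weight $6$. A naive application of Lemma \ref{absolute-grps} to those even-dimensional strata would give $0$ there, so the correction term could not arise; attributing it to ``the surjectivity statement in Lemma \ref{absolute-grps}'' is not correct, and your claim that the image of $H^{n-1}(\Pp^n \setminus Q)$ vanishes for all $n \geq 3$ already fails at $n = 6$, where $H^{5}(\Pp^{6} \setminus Q) \cong \Q(-3)$.

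Two smaller points. To land on the exact multiplicity $\binom{n+1}{n-5} - \binom{n+1}{n-6}$ you also need the restriction ($d_1$) map from the dimension-$6$ faces into the dimension-$5$ faces to be injective, and you need no higher differentials to hit this spot; the latter follows from the degree-$\geq 6$ vanishing provided by the cone argument, not from generic ``bookkeeping''. And the weight-$0$ piece is produced by the bottom row, which computes $H^{n-1}(\partial\Delta) \cong H^{n-1}(S^{n-1}) \cong \Q$ (the boundary of the simplex is a sphere), not by the simplex being ``acyclic''.
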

\begin{proof}
We apply the relative cohomology spectral sequence
\[
E_1^{p,q} = \bigoplus_{|I| = p} H^q(\Delta_I \setminus (Q \cap \Delta_I)) \Rightarrow H^{p+q}(\Pp^n \setminus Q, \Delta \setminus Q \cap \Delta).
\]
To compute $W_4H$ we note that our assumption on the rank of $Q$ implies that $Q \cap \Delta_I$ is smooth when $\Delta_I$ is a face  of $\Delta$ of dimension $\leq 5$. Therefore, by the previous lemma, the first row $E_1^{\bullet,1}$ is zero except for the entry $E_1^{n-1,1} \cong \bigoplus_{|I|=n-1} H^1(\Delta_I \setminus Q \cap \Delta_I ) \cong \bigoplus_{\binom{n+1}{n-1}} \mathbb{Q}(-1)$. Hence $gr^W_2 H \cong \bigoplus_{\binom{n+1}{n-1}} \mathbb{Q}(-1)$ as required. Again, by the previous lemma, row $E_1^{\bullet,2}$ is all zero, while the row $E_1^{\bullet,3}$ is zero except for $E_1^{n-3,3} \cong \bigoplus_{|I|=n-3} H^3(\Delta_I \setminus Q \cap \Delta_I ) \cong \bigoplus_{\binom{n+1}{n-3}} \mathbb{Q}(-2)$.

It remains to show the claim for the graded weight 6 piece of $H$, as well as that there are no higher weight graded pieces. By the assumption on the rank of $Q$, if $n \geq 6$ then $Q \subset \Pp^{n}$ is a generalized cone over a smooth quadric $Q_0 \subset \Pp^5$. Hence, $\Pp^{n} \setminus Q$ is a fiber bundle with fibers $\mathbb{A}^{n-5}$ over $\Pp^5 \setminus Q_0$. By the Leray spectral sequence we get  
\begin{equation}
\label{reduction1}
H^i(\Pp^{n} \setminus Q,\mathbb{Q}) \cong H^i(\Pp^5 \setminus Q_0,\mathbb{Q}) \text{ for all } i \geq 0
\end{equation}
Therefore we can apply the previous lemma again to conclude that for all $q \geq 6$ and $p \geq 0$ the $E_1^{p,q}$ vanish. Hence there are no weight graded pieces higher than 6. We also get $E_1^{n-4,5} = 0$ and 
\[
E_2^{n-5,5} \cong \text{gr}^W_{6}H \cong \coker\left(E_1^{n-6,5} \rightarrow E_1^{n-5,5}\right) \cong \bigoplus_{\binom{n+1}{n-5} - \binom{n+1}{n-6}} \mathbb{Q}(-3) .
\]
Finally, the bottom row $E_1^{\bullet,0}$ computes the cohomology of the simplicial complex $\Delta$ which is homologicaly equivalent to a sphere, giving us the graded weight 0 part of the proposition.
\end{proof}

We must be careful when pulling back this result to $S$. It is not the case that, for $S$ defined as above, $mot_G'$ is isomorphic to $\mathbb{Q}(0)_{/S} \bigoplus \mathbb{Q}(-1)^{m_1}_{/S} \bigoplus \mathbb{Q}(-2)^{m_2}_{/S} \bigoplus \mathbb{Q}(-3)^{m_3}_{/S}$, for $m_1,m_2,m_3$ defined in the previous Proposition, and $\mathbb{Q}(-k)_{/S}$, constant variation of Tate Hodge structure defined in \ref{mixed Tate HS}. The reason is that the cohomology of a smooth quadric in even dimensions has rank 2 in middle degree, and the monodromy might interchange the two classes. However, if we pass to the covering of $S$ which includes the square roots of the determinant of the matrix associated to the quadric $Q$ associated to the graph $G$, as well as the square roots of the determinants of all the quadrics $Q|_{\Delta_I}$, where $\Delta_I$ is a face of $\Delta$ as before, we do indeed get that $mot_G'$ is a constant variation. This explains the prefactors $P_{j,k}$ in Theorem 1. We assume that this has been done and denote the new base by $S$ as well.

\end{subsection}

\begin{subsection}{Reduction to four edges}

The main tool we use to reduce to the one-loop graph with 4 internal edges (Figure \ref{box-graph}) is the following:

\begin{lemma}
\label{BK-reduction}
Let $ n = N_G - 1 \geq 4$. Let $\Delta_i \subset \Pp^n$ be the face of $\Delta$ where $\alpha_i = 0$. Then the Feynman differential form $\omega_G(m,q)$ is exact, and we can find an $(n-1)$-form $\omega_{n-1}$ which is a global section of the sheaf $\Omega^{n-1}_{\Pp^n\setminus Q/k_S}$, along with constants $a_j \in k_S$, such that $d\omega_{n-1} = \omega_G$ and $\omega_{n-1}|_{\Delta_j} = a_j \omega_{G/e_j}$.
 \end{lemma}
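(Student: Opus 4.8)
The plan is to exploit the special shape of the one-loop integrand $\omega_G = \Psi_G^{N_G-4}\,\Xi_G^{-(N_G-2)}\,\Omega_G$, where $\Psi_G$ is linear and $\Xi_G$ is a nondegenerate quadratic form (its matrix $C$ is invertible in $k_S$, after the base change of the previous subsection). The key observation is that on $\Pp^n \setminus Q$, with $n = N_G - 1 \geq 4$, the de Rham cohomology is concentrated in degree $n$ by Lemma \ref{absolute-grps} (it is one-dimensional, spanned by a class of Tate type), so in particular $H^{n-1}_{\mathrm{dR}}(\Pp^n\setminus Q/k_S)$ has dimension equal to that predicted by the Gysin/Euler-characteristic count, and the question of exactness of $\omega_G$ is purely a cohomological vanishing statement once we check the weight. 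Concretely I would first write down an explicit primitive. Using the contraction (interior product) of $\Omega_G$ against the Euler vector field and against $\nabla\Xi_G = 2C\vec\alpha$, one produces $(n-1)$-forms of the form $\big(\text{linear}\big)\cdot\Psi_G^{a}\Xi_G^{-b}\,\iota_{v}\Omega_G$; differentiating such a form and using Euler's relation $\vec\alpha\cdot\nabla\Xi_G = 2\Xi_G$, $\vec\alpha\cdot\nabla\Psi_G = \Psi_G$ produces $\omega_G$ plus correction terms. The standard trick (this is exactly the Gram/reduction-of-a-quadric computation in \cite{BK}) is that because $C$ is invertible one can solve for the coefficients: the ansatz $\omega_{n-1} = \sum_i c_i(\alpha)\,\Psi_G^{N_G-4}\Xi_G^{-(N_G-3)}\,\iota_{\partial_i}\Omega_G$ with $c_i(\alpha)$ linear, and imposing $d\omega_{n-1}=\omega_G$, reduces to a linear algebra problem over $k_S$ whose solvability is equivalent to the invertibility of $C$ (or of the relevant $6\times 6$ block after the cone reduction \eqref{reduction1} when $N_G \geq 7$).

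Next I would compute the restriction $\omega_{n-1}|_{\Delta_j}$. Setting $\alpha_j = 0$ in the explicit primitive: the interior products $\iota_{\partial_i}\Omega_G$ restrict, up to sign, to $\pm\Omega_{G/e_j}$-type forms on the hyperplane $\Delta_j \cong \Pp^{n-1}$ (this is the combinatorial identity relating $\Omega_G$ on $\Pp^n$ to $\Omega$ on a coordinate hyperplane), and the quadratic form $\Xi_G|_{\alpha_j=0}$ is precisely $\Xi_{G/e_j}$ — this is the standard fact that contracting a tadpole-free edge of a one-loop graph restricts the second Symanzik polynomial, since $\Psi_G|_{\alpha_j=0}$ and $\Phi_G|_{\alpha_j = 0}$ are the Symanzik polynomials of $G/e_j$. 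Likewise $\Psi_G^{N_G-4}|_{\alpha_j=0}$ matches the power of $\Psi_{G/e_j}$ appearing in $\omega_{G/e_j}$ in the correct dimension shift ($d=4$ for $G$, but the contracted graph carries the same $\Psi$-exponent). Collecting the surviving linear coefficient $c_j(0,\dots)$, which is a nonzero element $a_j \in k_S$ by the nondegeneracy of $\Xi_G$, gives $\omega_{n-1}|_{\Delta_j} = a_j\,\omega_{G/e_j}$ as claimed. One should also check that the primitive has no poles other than along $Q$, i.e. that the linear coefficients $c_i(\alpha)$ really are polynomial and do not introduce spurious denominators — this follows from the same linear-algebra solution, since we solve for the $c_i$ with no division beyond the fixed scalar $\det C \in k_S^\times$.

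The main obstacle I anticipate is not the exactness itself (which is forced by the cohomological vanishing of Lemma \ref{absolute-grps} in degree $n-1$, combined with the fact that the Feynman form, being of top degree with the right pole order, lies in the span of exact forms once $H^n$ is accounted for), but rather producing a primitive that is simultaneously (a) a \emph{global} algebraic section of $\Omega^{n-1}_{\Pp^n\setminus Q/k_S}$ — no poles along the coordinate hyperplanes $\Delta_i$, since those are where we want to restrict — and (b) whose restrictions to each $\Delta_j$ are \emph{cleanly} proportional to $\omega_{G/e_j}$ rather than merely cohomologous to it. Both requirements are met by the explicit $\iota_v\Omega_G$ ansatz above, but verifying (a) and (b) requires the bookkeeping of the contraction identities and the Euler relations; this is the "moderately lengthy computation" the introduction warns about. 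The case $N_G \geq 7$ requires the preliminary remark that $\Pp^n\setminus Q$ is an affine-space bundle over $\Pp^5\setminus Q_0$ via \eqref{reduction1}, so that one may either work after pulling back along that projection or, equivalently, use that $\Xi_G$ depends (after a linear change of the $\alpha$'s over $k_S$) on only $6$ coordinates; the restriction statement to $\Delta_j$ then has to be tracked through this change of coordinates, which is the one extra subtlety compared to the $4 \leq N_G \leq 6$ case.
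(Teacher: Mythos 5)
The paper does not actually prove this lemma: its proof consists of the single line ``See \cite[Lemma 8.1]{BK}'', and the argument in that reference is precisely the explicit construction you outline -- contract $\Omega_G$ against a vector field built from the (generically nondegenerate) matrix of $\Xi_G$, use the Euler identities $d\Psi_G\wedge\Omega_G=\Psi_G\,d\alpha_1\wedge\cdots\wedge d\alpha_{N_G}$ and $d\Xi_G\wedge\Omega_G=2\,\Xi_G\,d\alpha_1\wedge\cdots\wedge d\alpha_{N_G}$, and read off the face restrictions. So you are following the same route as the cited source rather than a genuinely different one; the comparison is with Bloch--Kreimer, not with anything carried out in this paper.

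Two points in your sketch would need repair before it becomes a proof. First, the ansatz $\sum_i c_i(\alpha)\,\Psi_G^{N_G-4}\Xi_G^{-(N_G-3)}\iota_{\partial_i}\Omega_G$ with $c_i$ linear is homogeneous of degree $2$, not $0$, so it does not descend to a form on $\Pp^n$; and even formally its restriction to $\Delta_j$ carries $\Psi^{N_G-4}$, whereas $\omega_{G/e_j}$ (a graph with $N_G-1$ edges, still in $d=4$) carries $\Psi^{N_G-5}$ -- the $\Psi$-exponent does \emph{not} stay the same under contraction, contrary to what you assert. The correct shape is $\omega_{n-1}=\Psi_G^{N_G-5}\,\Xi_G^{-(N_G-3)}\,\iota_v\Omega_G$ with $v$ a \emph{constant} vector determined by the matrix of $\Xi_G$ (so that $v\cdot\nabla\Xi_G$ is proportional to $\Psi_G$), whose pullback to $\Delta_j$ is $v_j\,\omega_{G/e_j}$ on the nose; note also that for $N_G\geq 7$ the matrix $C$ is singular of rank $6$, so ``invertibility of $C$'' must be replaced by an argument on the rank-$6$ block, as you gesture at via \eqref{reduction1}. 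Second, your parenthetical claim that exactness is ``forced by the cohomological vanishing'' of Lemma \ref{absolute-grps} is false in exactly one case: for $N_G=6$ (so $n=5$ and $Q$ smooth of rank $6$) one has $H^5(\Pp^5\setminus Q)\cong\Q(-3)\neq 0$, which is pure of type $(3,3)$, and the pole order of $\omega_G$ places it in a Hodge--pole filtration step that spans all of $H^5$, so no weight or vanishing argument excludes a nonzero class; there the explicit primitive (or a residue computation along $Q$) is genuinely needed. In the other cases ($N_G=5$, and $N_G\geq 7$ via \eqref{reduction1}) the vanishing argument does apply, but it would only produce \emph{some} primitive, not one whose face restrictions are exactly $a_j\,\omega_{G/e_j}$, which is the property actually used in Proposition \ref{reduction-lemma}.
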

\begin{proof}
See \cite[Lemma 8.1]{BK}
\end{proof}

\begin{prop} \label{reduction-lemma}
For generic values of masses and momenta and in $d=4$ dimensions, the motivic Feynman amplitude of any one-loop graph $G$ with $N_G \geq 5$ edges is a $k_S$-linear combination of motivic periods of graph motives of 4-edge quotient graphs of $G$.
\end{prop}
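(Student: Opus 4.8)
The plan is to use Lemma \ref{BK-reduction} as the engine and proceed by induction on $N_G$, the number of edges, with the base case $N_G = 4$ being vacuous. So suppose $N_G \geq 5$ and the statement is known for all one-loop graphs with strictly fewer edges. By Lemma \ref{BK-reduction}, the Feynman form $\omega_G$ is exact: there is a global section $\omega_{n-1}$ of $\Omega^{n-1}_{\Pp^n \setminus Q/k_S}$ with $d\omega_{n-1} = \omega_G$ and $\omega_{n-1}|_{\Delta_j} = a_j\,\omega_{G/e_j}$ for constants $a_j \in k_S$. The key observation is that the class $[\omega_G] \in \omega^{gen}_{\mathrm{dR}}(mot_G')_{\mathrm{dR}}$ is therefore in the image of a sum of face maps (Definition \ref{face-map}): the total-complex computation of $H^{n-1}_{\mathrm{dR}}(\Pp^n\setminus Q, \Delta\setminus Q\cap\Delta)_{/S}$ means that an exact $(n-1)$-form on $\Pp^n\setminus Q$ equipped with primitive $\omega_{n-1}$ whose restrictions to the faces $\Delta_j$ are the forms $a_j\omega_{G/e_j}$ represents, in de Rham cohomology, the image under $\bigoplus_j (\text{face map})_j$ of the class $\bigl(\sum_j \pm a_j\,[\omega_{G/e_j}]\bigr)$ living in $\bigoplus_j H^{n-2}_{\mathrm{dR}}(\Delta_j, \Delta^{\{j\}}\cap\Delta_j)_{/S}$. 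Here one uses that $\Delta_j \cong \Pp^{n-1}$ and $Q\cap\Delta_j$ is the quadric attached to the contracted graph $G/e_j$, so $H^{n-2}(\Delta_j,\dots)_{/S}$ is (a base change of) $mot'_{G/e_j}$.

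Next I would translate this de Rham statement into a statement about motivic periods using the compatibility relation in the definition of $\mathcal{H}(S)$-periods: for a morphism $\phi: \mathcal{V}_1 \to \mathcal{V}_2$ one has $[\mathcal{V}_1, (\omega^X_B(\phi))^t\sigma_2, \omega_1]^{\mathfrak{m}} \sim [\mathcal{V}_2, \sigma_2, \omega^{gen}_{\mathrm{dR}}(\phi)(\omega_1)]^{\mathfrak{m}}$. Applying this to each face map $\phi_j: mot'_{G/e_j} \to mot'_G$ (after the relevant base change of $S$ fixing the square roots of quadric determinants, as arranged at the end of Section 2.2), and using that $\omega^{gen}_{\mathrm{dR}}(\phi_j)$ sends $[\omega_{G/e_j}]$ to (a multiple of) the face-map image of $[\omega_G]$, the $\mathcal{H}(S)$-period $I^{\mathfrak{m}}_G = [mot'_G, [\sigma_G], [\omega_G]]^{\mathfrak{m}}$ becomes equal to a $k_S$-linear combination $\sum_j c_j\, [mot'_{G/e_j}, [\partial_j\sigma_G], [\omega_{G/e_j}]]^{\mathfrak{m}}$, where $\partial_j\sigma_G = (\omega^Z_B(\phi_j))^t[\sigma_G]$ is the Betti class obtained by pushing the domain of integration onto the face $\Delta_j$. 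One must check that this pushed class is precisely $[\sigma_{G/e_j}]$, the canonical domain for $G/e_j$ — this is Stokes' theorem / the geometry of the face $\alpha_j = 0$ of the simplex, identifying $\partial_j\sigma_G$ with the simplex $\sigma_{G/e_j}$ for the contracted graph. Each $G/e_j$ is a one-loop graph with $N_G - 1$ edges, so if $N_G - 1 \geq 5$ we apply the inductive hypothesis to rewrite each $I^{\mathfrak{m}}_{G/e_j}$ as a $k_S$-linear combination of motivic periods of $4$-edge quotient graphs; if $N_G - 1 = 4$ we are already done. Since quotients of quotients are quotients, the result follows.

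The main obstacle I expect is the bookkeeping in the middle step: verifying rigorously that the \emph{cohomology class} of $[\omega_G]$ in $H^{n-1}_{\mathrm{dR}}(\Pp^n\setminus Q, \Delta\setminus Q\cap\Delta)_{/S}$ is genuinely the face-map image of $\sum_j \pm a_j [\omega_{G/e_j}]$ — i.e. chasing $\omega_{n-1}$ and its face restrictions through the total complex of the relative double complex \eqref{relative-double-complex} and matching signs $r^J_{i_j}$ — together with the parallel Betti-side check that the integration chain $\sigma_G$ decomposes correctly under the boundary maps, so that the pairing of $[\sigma_G]$ with the face-supported class reproduces the honest lower-dimensional motivic Feynman amplitude and not some spurious boundary contribution. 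A secondary subtlety, already flagged in the text, is that we are only working with the relative double complex over the generic point of the base and with the appropriate covering $S$ over which $mot'_G$ and all the $mot'_{G/e_j}$ are constant variations; one must ensure that the face maps and the identification of $Q\cap\Delta_j$ with the Symanzik quadric of $G/e_j$ are compatible with these base changes. Once these compatibilities are in place the argument is a clean induction, and — as the paper notes — combining it with the reduction of $N_G=6$-and-higher quadrics to rank $6$ (hence to the $5$-edge and then $4$-edge cases) recovers Nickel's classical reduction at the motivic level.
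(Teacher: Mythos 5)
Your proposal is correct and follows essentially the same route as the paper: it uses Lemma \ref{BK-reduction} to produce a primitive of $\omega_G$ in the total complex of the relative double complex, identifies $[\omega_G]$ as the face-map image of the classes $a_j[\omega_{G/e_j}]$, matches the Betti side by restricting $[\sigma_G]$ to the faces $\sigma_{G/e_j}$, and then descends to four edges by iteration (phrased in the paper as repeating the process, in yours as induction on $N_G$ — the same argument). The only superfluous remark is the closing appeal to the rank-$6$ property of the quadric, which is needed for the weight computations elsewhere but plays no role in this reduction.
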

\begin{proof}

We start with the motivic period $\left[mot_G',[\sigma_G],[\omega_G]\right]^{\mathfrak{m}}$, and we would like to show that there exist constants $a_I \in k_S$, for $I \subset \{1,\ldots,N_G\}$, such that we can write
\begin{equation}
\left[mot_G', [\sigma_G], [\omega_G]\right]^{\mathfrak{m}} = \sum_{\vert I \vert = N_G-4} a_I \cdot \left[mot_{G/e_I}', \left[\sigma_{G/e_I}\right], \left[\omega_{G/e_I}\right]\right]^{\mathfrak{m}}
\end{equation}
or equivalently:
\begin{equation}
\label{decomp-reduction}
I^{\mathfrak{m}}_G = \sum_{\vert I \vert = N_G-4} a_I \cdot I^{\mathfrak{m}}_{G/e_I}.
\end{equation}

Restricting the family $\Pp^{N_G-1}\setminus Q$, and $\Delta \setminus Q \cap \Delta$ to the fiber at the generic point, we apply the previous lemma as follows. Because $\Pp^{N_G-1}\setminus Q$ is affine and $\Delta \setminus Q \cap \Delta$ is a simple normal crossing divisor therein, $(mot'_G)_{\mathrm{dR}}$ is computed by the cohomology of the total complex of the double complex 
\begin{equation}
\label{dbl-complex}
\Gamma(\Pp^n \setminus Q,\Omega^{\bullet}_{\Pp^n \setminus Q}) \rightarrow \bigoplus\limits_{|I| = 1} \Gamma(\Delta_I \setminus \Delta_I \cap Q, \Omega_{\Delta_I \setminus \Delta_I \cap Q}^{\bullet}) \rightarrow \ldots \rightarrow \bigoplus\limits_{|I| = n} \Gamma(\Delta_I \setminus \Delta_I \cap Q, \Omega_{\Delta_I \setminus \Delta_I \cap Q}^{\bullet})
\end{equation}
where $\Omega_{\Delta_I}^{\bullet}$ is the direct image under $\Delta_I \hookrightarrow \Pp^n \setminus Q$ of the sheaf of algebraic differential forms on $\Delta_I$ and vanishes outside of $\Delta_I$. The horizontal morphisms are restrictions, i.e. pullback along the inclusion of faces $\Delta_{I} \hookrightarrow \Delta_{I \setminus i_j}$ with the sign $(-1)^k$ where $i_j$ is the $k$th element of $I$. The differential of the total complex of (\ref{dbl-complex}) is defined by:
\begin{equation}
\label{diff-of-total-complex}
d_{Tot}^n = \sum_{n=p+q} d^{p,q} + (-1)^p d_2^{p,q}
\end{equation}
where $d$ is the exterior derivative and $d_2$ is the restriction. Recall that we have specialized to the generic point, which we drop from the notation. Therefore the coefficients of the algebraic differential forms above lie in the field $k_S$. This double complex is obtained by specializing (\ref{relative-double-complex}) to a point and using the fact that we are working with affine schemes.

Therefore, the class of the Feynman form $[\omega_G] \in (mot'_G)_{\mathrm{dR}}$ can be represented by an element: $(\omega_G,0 \ldots, 0) \in \bigoplus_{N_G-1 = p+q} C^{p,q}$, where $C^{p,q} = \bigoplus_{\vert I \vert = p} \Gamma\left(\Delta_I \setminus Q \cap \Delta_I,\Omega^q_{\Delta_I \setminus Q \cap \Delta_I}\right)$.

By Lemma \ref{BK-reduction} we can construct an $(N_G-2)$-form 
\[\omega':=(\omega, 0, \ldots,0) \in \bigoplus_{N_G-2 = p+q} C^{p,q}, \quad \text{where }\omega \in \Gamma\left(\Pp^{N_G-1} \setminus Q,  \Omega^{N_G-2}_{\Pp^{N_G-1} \setminus Q}\right), \]
and an $(N_G-1)$-form 
\begin{equation}
\label{decomposition}
\begin{split}
\omega'' & := \left(0, \omega|_{\Delta_1}, \omega|_{\Delta_2},\ldots, \omega|_{\Delta_{N_G}},0,\ldots,0\right) = \\ 
& = \left(0, a_1 \cdot \omega_{G/e_1}, a_2 \cdot \omega_{G/e_2},\ldots,a_{N_G} \cdot \omega_{G/e_{N_G}},0,\ldots,0\right)  \in \bigoplus_{N_G-1 = p+q} C^{p,q}
\end{split}
\end{equation}
such that $d_{Tot} (\omega') = \omega + \omega''$. Now, obviously we can write $[\omega'']$ as a linear combination, over $k_S$, of classes of $(N_G-2)$-forms in $(mot_{G/e_j})_{\mathrm{dR}}$ for $1 \leq j \leq N_G$.

The de Rham component of a face map (Definition \ref{face-map}):
\begin{equation}
\label{face-map-in-use}
\Phi_{i,dR} :  \Gamma(\Sp(k_S),(mot_{G/e_i}')_{\mathrm{dR}}) \rightarrow \Gamma(\Sp(k_S),(mot_G')_{\mathrm{dR}}),
\end{equation}
for all $1\leq i \leq N_G$ sends
\[
\left[\left(a_i \cdot \omega_{G/e_i},0,\ldots,0\right)\right] \mapsto \left[\left(0,\ldots,0,a_i \cdot \omega_{G/e_i},0,\ldots,0
\right)\right]
\]
Summing over all the faces we get a morphism
\begin{equation}
\begin{split}
\Phi_{\mathrm{dR}} : & \bigoplus_i \Gamma(\Sp(k_S),(mot_{G/e_i}')_{\mathrm{dR}}) \rightarrow \Gamma(\Sp(k_S),(mot_G')_{\mathrm{dR}}) \\
& \left( a_1 \cdot \left[\omega_{G/e_1}\right], \ldots, a_{N_G} \cdot \left[\omega_{G/e_{N_G}}\right]\right) \mapsto [\omega_G]
\end{split}
\end{equation}
where we write $[\omega_{G/e_i}]$ for the class of the element $(\omega_{G/e_i},0,\ldots,0)$. 

The Betti components of face maps (\ref{face-map}) are given by restriction to the faces of $\Delta \setminus \Delta \cap Q$, and again we sum over all the faces to get:
\begin{equation}
\begin{split}
\Phi_B^{\vee}: & \Gamma(U^{gen}_{F,M},(mot_G')^{\vee}_{\mathrm{B}}) \rightarrow \bigoplus_i \Gamma(U^{gen}_{F,M},(mot_{G/e_i}')^{\vee}_{\mathrm{B}})\\ 
& [\sigma_G] \mapsto\left( \left[\sigma_{G/e_1}\right], \ldots,  \left[\sigma_{G/e_{N_G}}\right]\right)
\end{split}
\end{equation}
which shows the equivalence of motivic periods:
\[
[mot_G, [\sigma_G], [\omega_G]]^{\mathfrak{m}} = \sum_{1 \leq i \leq N_G} a_i \cdot \left[mot_{G/e_i}, \left[\sigma_{G/e_i}\right], \left[\omega_{G/e_i}\right]\right]^{\mathfrak{m}}
\]
Iterating this same process we can then write each of the $(N_G-2)$-forms on the right hand side above as a $k_S$-linear combination of $(N_G-3)$-forms of graphs obtained by contracting two edges of $G$, up to a form that is exact in relative de Rham cohomology. This can be repeated until we get to a $k_S$-linear combination of $3$-forms which belong to the de Rham realizations of motives of graphs with 4 edges obtained by contracting the $N_G-4$ edges of the original graph. Gathering all the coefficients $a_i$ at each stage into $a_I$ for each set of $N_G-4$ contracted edges $I$, we obtain the proof of the lemma.
\end{proof}

Applying the period map to (\ref{decomp-reduction}) we recover an old result of Nickel \cite{N}:

\begin{cor}
For generic values of masses and momenta the Feynman integral of any one-loop graph $G$ in $d=4$ dimensions is a $k_S$-linear combination of periods of graph motives of 4-edge quotient graphs of $G$.
\end{cor}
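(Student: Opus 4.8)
The plan is to deduce the Corollary directly from Proposition \ref{reduction-lemma} by applying the period homomorphism to the identity it establishes. By that proposition, for generic masses and momenta and in $d=4$ dimensions there are constants $a_I \in k_S$, indexed by the subsets $I \subset \{1,\ldots,N_G\}$ with $\vert I \vert = N_G-4$, such that the identity of families of motivic periods \eqref{decomp-reduction} holds in $\mathcal{P}^{\mathfrak{m},U^{gen}_{F,M},gen}_{\mathcal{H}(S)}$:
\[
I^{\mathfrak{m}}_G = \sum_{\vert I \vert = N_G-4} a_I \cdot I^{\mathfrak{m}}_{G/e_I},
\]
each $I^{\mathfrak{m}}_{G/e_I} = \left[mot_{G/e_I}', \left[\sigma_{G/e_I}\right], \left[\omega_{G/e_I}\right]\right]^{\mathfrak{m}}$ being the motivic Feynman amplitude of a one-loop graph with four edges obtained by contracting $N_G-4$ edges of $G$.

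First I would recall, as in \cite{Brown1}, that the period homomorphism $\mathrm{per}$ is additive and $k_S$-linear: for $f \in k_S$ it sends $f\cdot[\mathcal{V},\sigma,\omega]^{\mathfrak{m}}$ to $f$, viewed as a meromorphic function on $S(\mathbb{C})$, times $\mathrm{per}([\mathcal{V},\sigma,\omega]^{\mathfrak{m}})$; this is visible from the construction of $\mathrm{per}$, where clearing denominators and dividing back by $\alpha \in \mathcal{O}_W$ is precisely such a rescaling. Applying $\mathrm{per}$ to \eqref{decomp-reduction} therefore gives the identity of multivalued meromorphic functions on $S(\mathbb{C})$, with the prescribed branch along $Z = U^{gen}_{F,M}$,
\[
\mathrm{per}(I^{\mathfrak{m}}_G) = \sum_{\vert I \vert = N_G-4} a_I \cdot \mathrm{per}(I^{\mathfrak{m}}_{G/e_I}).
\]

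Next I would invoke the defining property of the motivic Feynman amplitude from Definition \ref{motivic-Feynman-amplitude}, namely $\mathrm{per}(I^{\mathfrak{m}}_G) = I_G(m,q)$ and $\mathrm{per}(I^{\mathfrak{m}}_{G/e_I}) = I_{G/e_I}(m,q)$, the latter being by construction a period of the graph motive $mot_{G/e_I}'$, i.e. the image under $\mathrm{per}$ of the period bracket $\left[mot_{G/e_I}', \left[\sigma_{G/e_I}\right], \left[\omega_{G/e_I}\right]\right]^{\mathfrak{m}}$. Substituting yields
\[
I_G(m,q) = \sum_{\vert I \vert = N_G-4} a_I \cdot I_{G/e_I}(m,q),
\]
which exhibits $I_G(m,q)$ as a $k_S$-linear combination of periods of graph motives of four-edge quotient graphs of $G$, as claimed.

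I expect no genuine obstacle: the entire content is Proposition \ref{reduction-lemma}, and the Corollary is its numerical shadow under $\mathrm{per}$, recovering Nickel's theorem \cite{N}. The only items needing a word of care are the $k_S$-linearity of $\mathrm{per}$ recalled above; the fact that for $\vert I \vert = N_G-4$ the quotient $G/e_I$ is again a one-loop four-edge graph with non-vanishing generic kinematics, so that $I_{G/e_I}(m,q)$ is defined and really is a period of $mot_{G/e_I}'$, which is already built into the hypotheses of Proposition \ref{reduction-lemma}; and the convergence of $I_G(m,q)$ for $(m,q) \in U^{gen}_{F,M}$, guaranteed by the construction of $S$ and $\sigma_G$ in Section \ref{motivic-lifts}.
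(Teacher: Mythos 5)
Your proposal is correct and is exactly the paper's argument: the Corollary is obtained by applying the period homomorphism to the identity \eqref{decomp-reduction} of Proposition \ref{reduction-lemma}, which the paper states in a single line and you merely spell out (the $k_S$-linearity of $\mathrm{per}$ and the identity $\mathrm{per}(I^{\mathfrak{m}}_G)=I_G(m,q)$). No gaps; the extra care about linearity and convergence is harmless elaboration of what the paper leaves implicit.
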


\begin{rmk}
Note that using face maps to show equivalences of families of motivic periods in the previous proposition is the "motivic lift" of repeatedly applying Stokes' theorem on the corresponding integrals under the period map.
\end{rmk}

\begin{rmk}
\label{they-are-dilogs-rmk}
Since we have reduced to motives of graphs with 4 edges in the previous proposition, proposition \ref{weight-graded-pieces} implies that all motivic Feynman amplitudes of one-loop graphs with generic masses and momenta are $k_S$-linear combinations of families of periods of motives that have the following weight graded pieces
\[
\text{gr}^W (mot_G) = \mathbb{Q}(-2)_{/_S} \oplus \mathbb{Q}(-1)_{/_S}^{\oplus 6} \oplus \mathbb{Q}(0)_{/_S}
\]
Griffiths transversality is used in \cite[§9]{BK} to show that, up to a constant of integration, periods of heighest weight of such motives are always $k_S$-linear combinations of dilogarithms.
\end{rmk}
\end{subsection}

\begin{subsection}{Fewer than four edges}
\begin{subsubsection}{The triangle graph}
We will also consider the graph with $N_G=3$ edges, henceforth \textit{the triangle graph}. In the next figure we have the triangle graph when $F=M=N_G$, i.e., all masses and momenta are non-zero. Recall that denote the internal edges with non-vanishing masses with double lines.

\begin{equation}
\label{general-triangle}
\begin{tikzpicture}
\SetGraphUnit{1}
  
  \SetUpEdge[lw = 1pt,
  color      = black,
  labelcolor = white,
  labelstyle = {sloped,above,yshift=2pt}]
  
  \SetUpVertex[FillColor=black, MinSize=8pt, NoLabel]

  \Vertex[x=4,y=2]{0}
  \Vertex[x=2,y=0]{1}
  \Vertex[x=6,y=0]{2}
  \Vertex[x=4,y=3,empty=true]{3}
  \Vertex[x=1,y=0,empty=true]{4}
  \Vertex[x=7,y=0,empty=true]{5}

  \Edge[label=$m_1$,style = {double}](0)(1)
  \Edge[label=$m_2$,style = {double}](0)(2)
  \Edge[label=$m_3$,style = {double}](2)(1)
  \tikzset{EdgeStyle/.style={postaction=decorate,decoration={markings,mark=at position 0.7 with {\arrow{latex}}}}}
  \Edge[label=$q_3$](3)(0)
  \Edge[label=$q_2$](4)(1)
  \Edge[label=$q_1$](5)(2)
  \tikzset{EdgeStyle/.append style = {bend left}}
\end{tikzpicture}
\end{equation}
The first and second Symanzik polynomials of this graph are:
\begin{equation}
\begin{split}
&\Psi_G = \alpha_1 + \alpha_2 + \alpha_3, \text{ and} \\
&\Xi_G(m,q) =  q_1^2\alpha_2\alpha_3 + q_2^2\alpha_1\alpha_3+q_3^2\alpha_1\alpha_2 + (m_1^2\alpha_1+m_2^2\alpha_2+m_3^2\alpha_3)\Psi_G .
\end{split}
\end{equation}

Note that the differential form corresponding to this graph $\omega_G$ (\ref{one-loop-amplitude}) has the first Symanzik polynomial in the denominator. Because of this we must consider the full graph motive. Let $L = V(\Phi_G)$, and $Q=V(\Xi_G(m,q))$. Then, the motivic Feynman amplitude is a family of periods associated to the object:
\[
mot_G := H^2(\Pp^2 \setminus (Q \cup L), \Delta \setminus (Q \cup L) \cap \Delta)_{/S}
\]
which we refer to as \textit{the motive of the triangle graph}. We will study the triangle graph and the Galois coaction on the associated motivic Feynman amplitude in section \ref{triangle}. We will also look at cases of the triangle graph where internal masses vanish.
\end{subsubsection}

\begin{subsubsection}{The bubble graph}
Finally, we will be considering the one loop graph with $N_G=2$ edges, henceforth \textit{the bubble graph}. The first case we will look at is $F=M=N_G$, when the masses and the momentum are non-vanishing. 
\begin{equation}
\label{general-bubble}
\begin{tikzpicture}

\SetGraphUnit{3}
  
  \SetUpEdge[lw = 1pt,
  color      = black,
  labelcolor = white,
  labelstyle = {sloped,above,yshift=2pt}]
  
  \SetUpVertex[FillColor=black, MinSize=8pt, NoLabel]

  \Vertex[x=2,y=0]{1}
  \Vertex[x=6,y=0]{2}
  \Vertex[x=0,y=0,empty=true]{4}
  \Vertex[x=8,y=0,empty=true]{5}

  \tikzset{EdgeStyle/.append style = {bend left}}
  \Edge[label=1,style = {double}](1)(2)
  \Edge[label=2,style = {double}](2)(1)
  \tikzset{EdgeStyle/.style={postaction=decorate,decoration={markings,mark=at position 0.7 with {\arrow{latex}}}}}
  \Edge[label=$-q_1$](4)(1)
  \Edge[label=$q_1$](5)(2)
\end{tikzpicture}
\end{equation}
The first and second Symanzik polynomials of this graph are:
\begin{equation}
\begin{split}
\Psi_G &= \alpha_1 + \alpha_2, \text{ and} \\
\Xi_G(m,q) &= q_1^2\alpha_1\alpha_2 + (m_1^2\alpha_1+m_2^2\alpha_2)\Psi_G .
\end{split}
\end{equation}
The motive of the bubble graph is 
\begin{equation}
\label{bubble-motive}
mot_G := H^1(\Pp^1 \setminus (Q \cup L), \Delta \setminus (Q \cup L) \cap \Delta)_{/S},
\end{equation}
where $L = V(\Phi_G)$, and $Q=V(\Xi_G(m,q))$.
It can easily be seen from the relative cohomology long exact sequence 
\[
\ldots \rightarrow H^0(\Delta \setminus (Q \cup L) \cap \Delta)_{/S} \rightarrow H^1(\Pp^1 \setminus (Q \cup L), \Delta \setminus (Q \cup L) \cap \Delta)_{/S} \rightarrow H^1(\Pp^1 \setminus (Q \cup L))_{/S} \rightarrow \ldots  
\]
that it has rank 3.

The Feynman form in $d=4$ dimensions is 
\begin{equation}
\label{bubble-Feynman-form}
\omega_G = \frac{\alpha_2d\alpha_1 - \alpha_1d\alpha_2}{\Psi_G^2}
\end{equation}
and the associated Feynman integral is $I_G = 1$. In order to relate the bubble graph with the motivic coaction on one loop graphs with more than 2 edges, we are going to consider the motivic Feynman amplitude of this graph in $d=2$ dimensions, in which case we have the Feynman form
\[
\theta^1_G = \frac{\alpha_2d\alpha_1 - \alpha_1d\alpha_2}{\Xi_G}.
\]
It defines a class over the generic point of the de Rham realization of the restricted motive of the bubble graph:
\[
mot_G' := H^1(\Pp^1 \setminus Q, \Delta \setminus Q \cap \Delta)_{/S}
\]
Denote by 
\[
[p_0p_1|p_2p_3] = \frac{(p_2-p_0)(p_3-p_1)}{(p_2-p_1)(p_3-p_0)}
\]
the cross-ratio of 4 points on $\Pp^1$. Then the period corresponding to the motivic Feynman amplitude of the bubble graph in $d=2$ dimensions is:
\begin{equation}
\begin{split}
\mathrm{per}\left(\left[mot_G,[\sigma_G],\left[\theta^1_G\right]\right]^{\mathfrak{m}}\right) & = \frac{1}{\sqrt{4|\det{C}|}} \mathrm{log}([p_0p_1|u_0u_1]) = \\ & = \frac{1}{x - y} \mathrm{log}\left(\frac{y}{x}\right),
\end{split}
\end{equation}
where $\{u_0,u_1\} := V(\Xi_G) \subset \Pp^1$, $C$ is the matrix of the quadratic form $\Xi_G$, $x,y$ are the coordinates of $u_0,u_1$ respectively in the chart where $\alpha_2 \not = 0$,  and $p_0=[0:1],p_1=[1:0]$. We know from cohomology computations that $\left[mot_G,[\sigma_G],\left[\theta^1_G\right]\right]^{\mathfrak{m}}$ is a motivic logarithm (up to an explicit prefactor), and since these are determined by their periods we have that $\left[mot_G,[\sigma_G],\left[\theta^1_G\right]\right]^{\mathfrak{m}} = \frac{1}{\sqrt{4|\det{C}|}} \log^{\mathfrak{m}}([p_0p_1|u_0u_1])$.
We will also consider the bubble graph with one vanishing mass:
\begin{equation}
\begin{tikzpicture}

\SetGraphUnit{3}
  
  \SetUpEdge[lw = 1pt,
  color      = black,
  labelcolor = white,
  labelstyle = {sloped,above,yshift=2pt}]
  
  \SetUpVertex[FillColor=black, MinSize=8pt, NoLabel]

  \Vertex[x=2,y=0]{1}
  \Vertex[x=6,y=0]{2}
  \Vertex[x=0,y=0,empty=true]{4}
  \Vertex[x=8,y=0,empty=true]{5}

  \tikzset{EdgeStyle/.append style = {bend left}}
  \Edge[label=1](1)(2)
  \Edge[label=2,style = {double}](2)(1)
  \tikzset{EdgeStyle/.style={postaction=decorate,decoration={markings,mark=at position 0.7 with {\arrow{latex}}}}}
  \Edge[label=$-q$](4)(1)
  \Edge[label=$q$](5)(2)
\end{tikzpicture}
\end{equation}
and we can check that one of the two points of $Q = \{u_0,u_1\} \subset \Pp^1$ coincides with one of the points of $\Delta = \{[1:0],[0:1] \}$. Without loss of generality let $u_0 \in \Delta$. In that case we shall consider the restricted motive of the bubble graph with one vanishing mass:
\begin{equation}
\label{bubble-motive-1mass0}
mot_G'' = H^1(\Pp^1 \setminus (u_1 \cup L), \Delta \setminus (u_1 \cup L) \cap \Delta)_{/S}
\end{equation}
It has rank 2, by the same argument as before. Note that in $d=4$ dimensions the Feynman integral is still 1, and moreover, in $d=2$ dimensions, the Feynman integral diverges. There is still one interesting period of this motive which we get by pairing the interval $(0,\infty)$ with a generator of $(mot_G'')_{\mathrm{dR}}$ represented by the form
\begin{equation}
\label{diff-form-2}
\theta^2_G = \frac{(x-y)(\alpha_2d\alpha_1 - \alpha_1d\alpha_2)}{(\alpha_2-x\alpha_1)(\alpha_2-y\alpha_1)}
\end{equation}
 where $x,y$ are coordinates of $u_1$ and $L$ respectively in the coordinate chart of $\Pp^1$ where $\alpha_2 \not = 0$. The associated period is:
\[
\mathrm{per}\left(\left[mot_G'',[\sigma_G],\left[\theta^2_G\right]\right]^{\mathfrak{m}}\right) = \frac{1}{x - y} \mathrm{log}\left(\frac{y}{x}\right) .
\]
Similarly to the perious case we see that $\left[mot_G'',[\sigma_G],\left[\theta^2_G\right]\right]^{\mathfrak{m}}$ is a motivic logarithm.
\end{subsubsection}
\end{subsection}
\end{section}

\begin{section}{Coaction on the 4-edge graph: proof of Theorem 1.}
\begin{subsection}{The motivic side of the coaction}
\label{coaction-4-edge-computation}
The first and second Symanzik polynomials of the one-loop graph with 4 edges $G$ are:
\begin{equation}
\label{Symanzik-four-edge}
\begin{split}
\Psi_G & = (\alpha_1 + \alpha_2 + \alpha_3 + \alpha_4) \\
\Xi_G &= (q_2+q_3)^2\alpha_1\alpha_3 + (q_1+q_2)^2\alpha_2\alpha_4 + q_1^2\alpha_1\alpha_4 + q_2^2\alpha_1\alpha_2 + q_3^2\alpha_2\alpha_3 + q_4^2\alpha_3\alpha_4 \\
&+ (m_1^2\alpha_1 + m_2^2\alpha_2 + m_3^2\alpha_3 + m_4^2\alpha_4) \Psi_G .
\end{split}
\end{equation}
Recall that the general formula for the coaction is:
\[
\Delta \left[mot_G', [\sigma_G], [\omega_G]\right]^{\mathfrak{m}} = \sum\limits_{j} \left[mot_G', [\sigma_G], [\omega_j]\right]^{\mathfrak{m}} \otimes  \left[mot_G', [\omega_j]^{\vee}, [\omega_G]\right]^{\mathfrak{dr}}
\]
where $\{[\omega_j]\}$ is a basis of $(mot_G')_{\mathrm{dR}}$, and $\{[\omega_j]^{\vee}\}$ is the dual basis. The first sheet of the relative cohomology spectral sequence
\[
E_1^{p,q} = \bigoplus_{|I| = p} H^q(\Delta_I \setminus (Q \cap \Delta_I)) \Rightarrow H^{p+q}(\Pp^n \setminus Q, \Delta \setminus Q \cap \Delta)
\]
applied to any fiber over $t \in S(\mathbb{C})$ of $(mot_G')_B$, by lemma \ref{absolute-grps}, reads:
\begin{equation}
\label{4-edge-ss}
\begin{tikzcd}
E^{0,3} \cong \mathbb{Q}(-2) \arrow{r} & E^{1,3} \cong 0  \arrow{r} & E^{2,3} \cong 0 \arrow{r} & E^{3,3} \cong 0 \\
E^{0,2} \cong 0 \arrow{r} & E^{1,2} \cong 0  \arrow{r} & E^{2,2} \cong 0 \arrow{r} & E^{3,2} \cong 0 \\
E^{0,1} \cong 0 \arrow{r} & E^{1,1} \cong 0  \arrow{r} & E^{2,1} \cong \mathbb{Q}(-1)^{\oplus 6} \arrow{r} & E^{3,1} \cong 0 \\
E^{0,0} \cong \mathbb{Q}(0) \arrow{r} & E^{1,0} \cong  \mathbb{Q}(0)^{\oplus 4} \arrow{r} & E^{2,0} \cong  \mathbb{Q}(0)^{\oplus 6} \arrow{r} & E^{3,0} \cong \mathbb{Q}(0)^{\oplus 4} .\\
\end{tikzcd}
\end{equation}
We can see that the morphism 
\[
\bigoplus_{|I| = 2} H^1(\Delta_I \setminus Q \cap \Delta_I, \Delta^I)_{/S} \rightarrow W_2 H^3(\Pp^3 \setminus Q, \Delta \setminus Q \cap \Delta) ,
\]
given by the sum of face maps, is surjective by computing the relative cohomology spectral sequence for a fiber of $H^1(\Delta_I \setminus Q \cap \Delta_I, \Delta^I)_{/S}$, where $|I|=2$, and noticing that the morphism of spectral sequneces induced by the sum of face maps is surjective. Now we look at the de Rham and Betti realization of this sum of face maps.

Consider the bubble graph obtained by contracting edges $e_i$ and $e_j$. In the next figure we have chosen to contract $e_2$ and $e_3$:
\begin{equation}
\begin{tikzpicture}

   \SetGraphUnit{1}
  \SetUpEdge[lw = 1pt,
  color      = black,
  labelcolor = white,
  labelstyle = {sloped,above,yshift=2pt}]
  
  \SetUpVertex[FillColor=black, MinSize=8pt, NoLabel]

  \Vertex[x=2,y=0]{1}
  \Vertex[x=6,y=0]{2}
  \Vertex[x=0,y=1,empty=true]{4}
  \Vertex[x=8,y=1,empty=true]{5}
  \Vertex[x=0,y=-1,empty=true]{6}
  \Vertex[x=8,y=-1,empty=true]{7}

  \tikzset{EdgeStyle/.append style = {bend left}}
  \Edge[label=1,style = {double}](1)(2)
  \Edge[label=4,style = {double}](2)(1)
  \tikzset{EdgeStyle/.style={postaction=decorate,decoration={markings,mark=at position 0.7 with {\arrow{latex}}}}}
  \Edge[label=$q_2$](4)(1)
  \Edge[label=$q_3$](5)(2)
  \Edge[label=$q_4$](6)(1)
  \Edge[label=$q_1$](7)(2)
\end{tikzpicture}
\end{equation}
The restricted motive of the bubble graph (\ref{bubble-motive}) obtained by contracting edges $e_i$ and $e_j$ is:
\[
mot_{G/\{e_i,e_j\}}' = H^1\left(\Pp^1 \setminus Q_{G/\{e_i,e_j\}}, \Delta \setminus Q_{G/\{e_i,e_j\}} \cap \Delta \right)_{/S}
\]
where $Q_{G/\{e_i,e_j\}}$ is the vanishing locus of the second Symanzik polynomial of the bubble graph. Observe that $\Xi_{G/\{e_i,e_j\}} = \Xi_G |_{\{\alpha_i = 0, \alpha_j = 0\}}$, hence:
\[
 H^1(\Delta_I \setminus Q \cap \Delta_I, \Delta^I)_{/S} \cong mot_{G/\{e_i,e_j\}}' ,
\]
where $I = \{i,j\}$. We can then choose a basis of the fiber over the generic point of $(mot_G)_{\mathrm{dR}}$ such that it contains the class of the Feynman form $[\omega_G]$ of $G$ , which is of weight 4, and the 6 weight 2 classes denoted by $\left[\omega^{ij}_G\right]$, which we define to be the images of $\left[\theta^1_{G/\{e_i,e_j\}}\right]$ under the de Rham component of the face map (see Definition \ref{face-map}):
\begin{equation}
\begin{split}
\Phi_{ij,dR} : &\Gamma(\Sp(k_S),(mot'_{G/\{e_i,e_j\}})_{\mathrm{dR}}) \rightarrow \Gamma(\Sp(k_S),(mot'_G)_{\mathrm{dR}}) \\
& \left[\theta^1_{G/\{e_i,e_j\}}\right] \mapsto \left[\omega^{ij}_G\right]
\end{split}
\end{equation}
for all $1 \leq i < j \leq 4$.
The restriction to the faces of the domain of integration 
\[
\Phi_B^{\vee}: [\sigma_G] \mapsto [\sigma_{G/\{e_i,e_j\}}]
\]
is the Betti component of the face map, which gives an equivalence of motivic periods:
\[
\left[mot_G', [\sigma_G], \left[\omega^{ij}_G\right] \right]^{\mathfrak{m}} = \left[mot'_{G/\{e_i,e_j\}},  [\sigma_{G/\{e_i,e_j\}}], \left[\theta^1_{G/\{e_i,e_j\}}\right]\right]^{\mathfrak{m}} = I_{G/\{e_i, e_j\}}^{\mathfrak{m}}\left(\theta^1_{G/\{e_i,e_j\}}\right), 
\]
where $I_{G/\{e_i, e_j\}}^{\mathfrak{m}}\left(\theta^1_{G/\{e_i,e_j\}}\right)$ is the motivic Feynman amplitude of the bubble graph in $d=2$ dimensions.
We have proved:
\begin{prop}
Let $G$ be a one loop graph with $N_G = 4$. Then the motivic Galois coaction on its motivic Feynman amplitude in $d=4$ dimensions with non-vanishing masses and momenta is:
\begin{equation}
\label{coaction-4-edge}
\begin{split}
\Delta I_G^{\mathfrak{m}} & = I_G^{\mathfrak{m}} \otimes (\mathbb{L}^{\mathfrak{dr}})^2 + \sum_{1 \leq i<j \leq 4} I_{G/\{e_i,e_ j\}}^{\mathfrak{m}}\left(\theta^1_{G/\{e_i,e_ j\}}\right) \otimes  \left[mot'_{G}, \left[\omega^{ij}_{G}\right]^{\vee},[\omega_{G}]\right]^{\mathfrak{dr}}  + 1 \otimes I^{\mathfrak{dr}}_G .
\end{split}
\end{equation}
\end{prop}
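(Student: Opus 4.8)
The plan is to expand the general Galois coaction \eqref{general-coaction} applied to $I_G^{\mathfrak{m}} = [mot_G',[\sigma_G],[\omega_G]]^{\mathfrak{m}}$ with respect to a basis of $\omega^{gen}_{\mathrm{dR}}(mot_G')_{\mathrm{dR}}$ adapted to the weight filtration, and then identify each of the eight resulting terms. By Proposition \ref{weight-graded-pieces} with $n=N_G-1=3$ (and after the base change to $S$ that splits the relevant quadric determinants, so that $mot_G'$ is a constant variation), the de Rham realization has $\mathrm{gr}^W_0\cong\Q(0)$, $\mathrm{gr}^W_2\cong\Q(-1)^{\oplus 6}$ and $\mathrm{gr}^W_4\cong\Q(-2)$, as displayed in \eqref{4-edge-ss}; in particular it has rank $8$. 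First I would fix a basis $\{[e_0]\}\cup\{[\omega^{ij}_G]:1\le i<j\le 4\}\cup\{[\omega_G]\}$, where $[\omega_G]$ is the Feynman form (a lift of the generator of $\mathrm{gr}^W_4$), $[e_0]$ is the canonical weight-$0$ generator computed by the bottom row $E_1^{\bullet,0}$ (the simplicial cohomology of the sphere $\Delta$, so dual to the relative fundamental class), and the $[\omega^{ij}_G]$ are the images under the de Rham face maps $\Phi_{ij,\mathrm{dR}}$ of the classes $[\theta^1_{G/\{e_i,e_j\}}]$ of the $d=2$ bubble subquotients. The point requiring argument is that this is indeed a basis, i.e.\ that the $[\omega^{ij}_G]$ are linearly independent modulo $W_0$, equivalently that the sum of de Rham face maps surjects onto $W_2(mot_G')_{\mathrm{dR}}$: this follows from the morphism of relative-cohomology spectral sequences induced by the face maps, since smoothness of each $Q\cap\Delta_I$ (the rank hypothesis) together with Lemma \ref{absolute-grps} forces the row $E_1^{\bullet,1}$ to be concentrated in $E_1^{2,1}\cong\Q(-1)^{\oplus 6}$, on which the comparison map is the identity, while the weight-$0$ rows only affect $W_0$. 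Dually, the Betti face maps give $\Phi_B^{\vee}([\sigma_G])=([\sigma_{G/\{e_1,e_2\}}],\dots,[\sigma_{G/\{e_3,e_4\}}])$.

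With the basis in hand I would evaluate the three types of term. \emph{Weight $4$:} the term is $[mot_G',[\sigma_G],[\omega_G]]^{\mathfrak{m}}\otimes[mot_G',[\omega_G]^{\vee},[\omega_G]]^{\mathfrak{dr}}$; the motivic factor is $I_G^{\mathfrak{m}}$ by definition, and since $[\omega_G]^{\vee}$ annihilates $W_2$ it factors through $(\mathrm{gr}^W_4)^{\vee}$, so by the morphism relation in $\mathcal{P}^{\mathfrak{dr},gen}_{\mathcal{H}(S)}$ (and scale-invariance under rescaling a basis vector together with its dual) the de Rham factor is the de Rham period of $\mathrm{gr}^W_4\cong\Q(-2)=\Q(-1)^{\otimes 2}$, namely $(\mathbb{L}^{\mathfrak{dr}})^2$. \emph{Weight $2$:} for each $i<j$ the term is $[mot_G',[\sigma_G],[\omega^{ij}_G]]^{\mathfrak{m}}\otimes[mot_G',[\omega^{ij}_G]^{\vee},[\omega_G]]^{\mathfrak{dr}}$; using $\Xi_{G/\{e_i,e_j\}}=\Xi_G|_{\alpha_i=\alpha_j=0}$ to identify $H^1(\Delta_{\{i,j\}}\setminus Q\cap\Delta_{\{i,j\}},\Delta^{\{i,j\}})$ with $mot'_{G/\{e_i,e_j\}}$, together with $\Phi_B^{\vee}([\sigma_G])=[\sigma_{G/\{e_i,e_j\}}]$ and the morphism relation for the face map, the motivic factor becomes $[mot'_{G/\{e_i,e_j\}},[\sigma_{G/\{e_i,e_j\}}],[\theta^1_{G/\{e_i,e_j\}}]]^{\mathfrak{m}}=I_{G/\{e_i,e_j\}}^{\mathfrak{m}}(\theta^1_{G/\{e_i,e_j\}})$, while the de Rham factor is left in the abstract form $[mot_G',[\omega^{ij}_G]^{\vee},[\omega_G]]^{\mathfrak{dr}}$, to be computed explicitly in the next subsection via the Gauss--Manin connection. \emph{Weight $0$:} the term is $[mot_G',[\sigma_G],[e_0]]^{\mathfrak{m}}\otimes[mot_G',[e_0]^{\vee},[\omega_G]]^{\mathfrak{dr}}$; by the morphism relation for $W_0\hookrightarrow mot_G'$ the motivic factor is the pairing of $[\sigma_G]$ with the fundamental-class generator, which is the unit $1$, and $[e_0]^{\vee}$ is by construction the composite of the projection $\epsilon$ onto $W_0$ with the canonical trivialization of $\Q(0)$, so the de Rham factor is precisely $I_G^{\mathfrak{dr}}$ in the sense of Definition \ref{motivic-Feynman-amplitude}. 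Summing the single weight-$4$, the six weight-$2$, and the single weight-$0$ contributions gives exactly \eqref{coaction-4-edge}.

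\textbf{Main obstacle.} The only genuinely nontrivial step is establishing that the six face-map images $[\omega^{ij}_G]$ span $W_2(mot_G')_{\mathrm{dR}}/W_0$; everything else is bookkeeping with the period relations of \S\ref{cat-of-realizations-and-mot-periods}. I expect this to come cleanly from the spectral-sequence comparison sketched above, once one also notes that no differential $d_r$ ($r\ge 2$) can hit the $E_1^{2,1}$ row, which is automatic here because the target entries that could receive such a differential vanish in the range shown in \eqref{4-edge-ss}. A secondary point requiring care, but no real work, is the consistent use of $mot_G$ versus $mot_G'$ (legitimate for $N_G=4$ since then $N_G=(h_G+1)\tfrac{d}{2}$) and the aforementioned scale-invariance of the de Rham periods.
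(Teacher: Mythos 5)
Your proposal follows essentially the same route as the paper: expand the coaction \eqref{general-coaction} in a weight-adapted basis containing $[\omega_G]$ and the six face-map images $[\omega^{ij}_G]$ of the bubble classes $[\theta^1_{G/\{e_i,e_j\}}]$, justify that these span $\mathrm{gr}^W_2$ via the morphism of relative-cohomology spectral sequences and Lemma \ref{absolute-grps}, and use the Betti/de Rham components of the face maps together with $\Xi_{G/\{e_i,e_j\}}=\Xi_G|_{\alpha_i=\alpha_j=0}$ to identify the weight-2 motivic factors with $I^{\mathfrak{m}}_{G/\{e_i,e_j\}}(\theta^1_{G/\{e_i,e_j\}})$. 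Your explicit handling of the weight-4 and weight-0 terms (via the graded quotient $\Q(-2)$ and the projection $\epsilon$) only spells out what the paper leaves implicit, so the argument is correct and matches the paper's proof.
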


\begin{cor}
We can rewrite the coaction in terms of motivic logarithms as
\[
\Delta I_G^{\mathfrak{m}} = I_G^{\mathfrak{m}} \otimes (\mathbb{L}^{\mathfrak{dr}})^2 + \sum_{1 \leq i<j \leq 4}\frac{1}{\sqrt{4 |\det D_{i,j}|}} \log^{\mathfrak{m}}\left(\left[p_0p_1 | u_{\{i, j\}}^0u_{\{i, j\}}^1\right]\right) \otimes  \left[mot'_{G}, \left[\omega^{ij}_{G}\right]^{\vee}, [\omega_{G}]\right]^{\mathfrak{dr}} + 1 \otimes I^{\mathfrak{dr}}_G ,
\]
where $D_{i,j}$ is the matrix of the quadratic form $\Xi_{G/\{e_i,e_ j\}}$.
\end{cor}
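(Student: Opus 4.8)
The plan is to deduce this Corollary from the preceding Proposition by substituting, term by term, the explicit identification of the $d=2$ bubble-graph motivic Feynman amplitude established for the bubble graph earlier in this section. In the coaction formula \eqref{coaction-4-edge}, the summand $I_G^{\mathfrak{m}} \otimes (\mathbb{L}^{\mathfrak{dr}})^2$ and the summand $1 \otimes I_G^{\mathfrak{dr}}$ are already in the desired shape, so it suffices to rewrite each of the six middle terms $I_{G/\{e_i,e_j\}}^{\mathfrak{m}}\bigl(\theta^1_{G/\{e_i,e_j\}}\bigr) \otimes \bigl[mot'_G, [\omega^{ij}_G]^{\vee}, [\omega_G]\bigr]^{\mathfrak{dr}}$, leaving the de Rham (right) tensor factor of each one untouched.

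First I would recall that, by construction, $I_{G/\{e_i,e_j\}}^{\mathfrak{m}}\bigl(\theta^1_{G/\{e_i,e_j\}}\bigr)$ is exactly the motivic Feynman amplitude in $d=2$ dimensions of the bubble graph $G/\{e_i,e_j\}$, whose second Symanzik polynomial is the restriction $\Xi_{G/\{e_i,e_j\}} = \Xi_G|_{\alpha_i=\alpha_j=0}$ --- a rank-$2$ quadratic form in two variables with matrix $D_{i,j}$ and vanishing locus $\{u^0_{\{i,j\}}, u^1_{\{i,j\}}\} = V(\Xi_{G/\{e_i,e_j\}}) \subset \Pp^1$. Then I would invoke the bubble-graph computation already carried out: from the cohomology of the rank-$3$ motive $mot'$ of a bubble graph, the motivic period $\bigl[mot', [\sigma], [\theta^1]\bigr]^{\mathfrak{m}}$ is a motivic logarithm up to the explicit prefactor $\tfrac{1}{\sqrt{4|\det D_{i,j}|}}$; since a motivic logarithm is determined by its period and the period of $\bigl[mot', [\sigma], [\theta^1]\bigr]^{\mathfrak{m}}$ equals $\tfrac{1}{\sqrt{4|\det D_{i,j}|}}\log\bigl([p_0p_1 | u^0_{\{i,j\}}u^1_{\{i,j\}}]\bigr)$, this forces
\[
I_{G/\{e_i,e_j\}}^{\mathfrak{m}}\bigl(\theta^1_{G/\{e_i,e_j\}}\bigr) = \frac{1}{\sqrt{4|\det D_{i,j}|}}\,\log^{\mathfrak{m}}\bigl(\bigl[p_0p_1 | u^0_{\{i,j\}}u^1_{\{i,j\}}\bigr]\bigr),
\]
with $p_0 = [0:1]$ and $p_1 = [1:0]$.

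Finally I would substitute this equality into each of the six middle summands of \eqref{coaction-4-edge} and collect; the result is the stated formula. The substitution is legitimate because it is an equality of families of motivic periods and the coaction takes values in a tensor product over $k_S$ that is bilinear in the two slots, so replacing the left-hand (motivic) factor by an equal motivic period alters nothing else. I do not expect any genuine obstacle here: the Corollary is a formal consequence of the Proposition together with the earlier bubble-graph computation. The only point needing care is bookkeeping of the normalization constant --- one must check that the matrix denoted $C$ in the general bubble-graph discussion corresponds, in the present setting, precisely to the matrix $D_{i,j}$ of $\Xi_{G/\{e_i,e_j\}}$, which holds by definition, so that the prefactor $\tfrac{1}{\sqrt{4|\det D_{i,j}|}}$ appears paired with the correct cross-ratio.
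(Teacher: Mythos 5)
Your proposal is correct and follows essentially the same route as the paper: the paper's proof likewise identifies $mot'_{G/\{e_i,e_j\}}$ with $H^1$ of $\Pp^1$ minus the two zeros $\{u^0_{\{i,j\}},u^1_{\{i,j\}}\}$ of $\Xi_{G/\{e_i,e_j\}}$ relative to $\{p_0,p_1\}$, notes that $\theta^1_{G/\{e_i,e_j\}}$ has simple poles there, and so reduces to the bubble-graph computation giving $\frac{1}{\sqrt{4|\det D_{i,j}|}}\log^{\mathfrak{m}}\bigl(\bigl[p_0p_1\,|\,u^0_{\{i,j\}}u^1_{\{i,j\}}\bigr]\bigr)$, which is then substituted into the Proposition. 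Your attention to the identification of the matrix $C$ with $D_{i,j}$ is exactly the bookkeeping point that makes the prefactor come out right.
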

\begin{proof}
We have
\[
mot'_{G/\{e_i, e_j\}}\cong  H^1(\Pp^1 \setminus \{ u^0_{\{i, j\}}, u^1_{\{i, j\}}, \{ p_0, p_1]\})_{/S}
\]
where 
\begin{equation}
\begin{split}
& \{u_{\{i, j\}}^0,u_{\{i, j\}}^1\} = Q_{G/\{e_i, e_j\}} \cap {\Delta_{\{i,j\}}} \\
& \{p_0,p_1\} = \{[1:0],[0:1]\}
\end{split}
\end{equation}
and the form $\theta^1_{G/\{e_i,e_ j\}}$ has simple poles at $\{u_{ \{i, j\}}^0,u_{\{i, j\}}^1\}$. The result follows.
\end{proof}

\begin{rmk}
One can combine the previous proposition with Proposition \ref{reduction-lemma} to obtain the coaction on all one-loop graphs with $N_G \geq 4$ edges in $d=4$ dimensions with non-vanishing masses and momenta.
\end{rmk}
\end{subsection}

\begin{subsection}{de Rham side of the coaction}
\label{de-rham-side}
In order to complete the proof of Theorem 1 we need to determine the de Rham side of the coaction, i.e. to interpret the objects $\left[mot'_{G}, \left[\omega^{ij}_{G}\right]^{\vee}, [\omega_{G}]\right]^{\mathfrak{dr}}$. 

\begin{subsubsection}{Connection and the de Rham periods}
\label{connection-and-de-rham-periods}

The connection allows us to differentiate a section $[\omega]$ of  $H_{\mathrm{dR}}^n(X,D)_{/S}$ with respect to parameters, which in the case of Feynman amplitudes are the masses and momenta. Consider a section $[\omega]$ of $\mathcal{V}_{\mathrm{dR}}$ on an open affine $U \subset S$ with coordinate $q$. We can compose the connection with the contraction by a vector field $\frac{\partial}{\partial q}$ to obtain a map:
\[
\mathcal{V}_{\mathrm{dR}} \xrightarrow{\nabla} \mathcal{V}_{\mathrm{dR}} \otimes \Omega^1_{S/k} \xrightarrow{\frac{\partial}{\partial q}} \mathcal{V}_{\mathrm{dR}}
\]
which sends $[\omega]$ to its first derivative with respect to $q$, denoted $\nabla_q (\omega)$. Since $\mathcal{V}_{\mathrm{dR}}$ has finite rank we will get a relation between
\[
[\omega], \nabla_q([\omega]), \nabla_q^2 ([\omega]),...,\nabla_q^k ([\omega])
\]
for some finite $k$. This is the Picard-Fuchs equation satisfied by $\omega$ and therefore by per$([\mathcal{V}, [\sigma], [\omega]]^{\mathfrak{m}})$, for some cycle of integration $\sigma \in (\omega^X_B(\mathcal{V}))^{\vee}$, defined over some simply connected $X \subset S(\mathbb{C})$. 

We use the Gauss--Manin connection on the vector bundle
\[
(mot'_G)_{\mathrm{dR}} = H_{\mathrm{dR}}^3(\Pp^3 \setminus Q, \Delta \setminus Q \cap \Delta)_{/S}
\]
 This vector bundle sits in a long exact sequence of vector bundles:
\[
\ldots \rightarrow H_{\mathrm{dR}}^2(\Delta \setminus Q \cap \Delta)_{/S} \rightarrow H_{\mathrm{dR}}^3(\Pp^3 \setminus Q, \Delta \setminus Q \cap \Delta)_{/S} \rightarrow H_{\mathrm{dR}}^3(\Pp^3 \setminus Q)_{/S} \rightarrow \ldots , 
\]
where $H^k_{\rm{dR}}(\Delta \setminus Q \cap \Delta)_{/S}$ are objects of $\mathcal{H}(S)$ which are obtained by truncating the complexes \eqref{relative-double-complex-Betti} and \eqref{relative-double-complex} on the left so that the non-zero components are in $|J| \geq n-1$,and we denote the section of $H_{\mathrm{dR}}^3(\Pp^3 \setminus Q)_{/S}$ over the generic point which is the image of $[\omega_G]$ by the same symbol. Since $H_{\mathrm{dR}}^3(\Pp^3 \setminus Q)_{/S}$ is a vector bundle with connection of rank 1 we know that the Feynman integrand satisfies a relation $\nabla_{q_1}([\omega_G]) + B(m,q)[\omega_G] = 0$, where $\nabla_{q_1}$ is the Gauss--Manin connection on $H_{\mathrm{dR}}^3(\Pp^3 \setminus Q)_{/S}$ composed with contraction by the vector field $\partial/\partial q_1$, and $B(m,q) \in k_S$. This relation lifts to a relation of sections of $H_{\mathrm{dR}}^3(\Pp^3 \setminus Q, \Delta \setminus Q \cap \Delta)_{/S}$. We compute it explicitly in the following lemma:

\begin{lemma}
\label{computing-picard-fuchs}
Let $q_1$ be one of the momentum parameters of a one-loop four-edge integral $I_G(m,q)$ with non-vanishing masses and momenta. Then $[\omega_G] \in \Gamma (\Sp(k_S),(mot_G')_{\mathrm{dR}})$ satisfies a relation of Picard-Fuchs type
\begin{equation}
\label{Picard-Fuchs}
\nabla_{q_1}([\omega_G]) + B(m,q)[\omega_G] = [d\beta] 
\end{equation}
where $\beta$ is a section of $\Omega^2_{\Pp^3 \setminus Q/\Sp(k_S)}$, and $B(m,q) \in k_S$. Both $\beta$ and $B(m,q)$ can be computed explicitly.
\end{lemma}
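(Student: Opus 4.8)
The plan is to produce the Picard--Fuchs relation by working explicitly with representatives of de Rham cohomology classes on the affine quadric complement $\Pp^3 \setminus Q$, where $Q = V(\Xi_G)$, and differentiating the Feynman form $\omega_G = \Psi_G^{0}/\Xi_G^{2}\,\Omega_G = \Omega_G/\Xi_G^2$ with respect to $q_1$. First I would fix an affine chart, say $\alpha_4 = 1$, so that $\omega_G$ becomes $P/\Xi_G^2 \, d\alpha_1 d\alpha_2 d\alpha_3$ for a polynomial $P$ (in fact, in this chart $\Omega_G$ restricts to $\pm\,d\alpha_1\wedge d\alpha_2\wedge d\alpha_3$ up to the Euler-vector-field contraction, which one handles by the usual projective/affine dictionary). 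Since $q_1$ enters $\Xi_G$ only through the coefficient of $\alpha_1\alpha_4$ (see \eqref{Symanzik-four-edge}) — writing $\Xi_G = q_1^2\,\alpha_1\alpha_4 + R$ with $R$ independent of $q_1$ — we get $\partial_{q_1}\Xi_G = 2q_1\,\alpha_1\alpha_4$, hence on the level of forms
\[
\nabla_{q_1}(\omega_G) = \partial_{q_1}\!\left(\frac{\Omega_G}{\Xi_G^2}\right) = -\,\frac{2\cdot 2q_1\,\alpha_1\alpha_4\,\Omega_G}{\Xi_G^3}.
\]
So the task reduces to expressing the class of $\dfrac{\alpha_1\alpha_4\,\Omega_G}{\Xi_G^3}$ as a $k_S$-multiple of $\dfrac{\Omega_G}{\Xi_G^2}$ modulo exact forms in $\Gamma(\Sp(k_S),\Omega^{\bullet}_{\Pp^3\setminus Q/\Sp(k_S)})$.

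The key mechanism is the classical Griffiths--Dwork reduction of pole order for a smooth hypersurface. Concretely, for a quadric $\Xi_G$ one writes any numerator that lies in the Jacobian ideal $\bigl(\partial_{\alpha_1}\Xi_G,\dots,\partial_{\alpha_4}\Xi_G\bigr)$ — and for a smooth quadric in generic position these partials are a regular sequence generating everything in the right degree, since $\Xi_G = \vec\alpha\, C\, \vec\alpha^{\,t}$ has $\partial_{\alpha_i}\Xi_G = 2(C\vec\alpha)_i$, a basis of linear forms — as a combination $\sum_i A_i\,\partial_{\alpha_i}\Xi_G$, and then
\[
\frac{\bigl(\sum_i A_i\,\partial_{\alpha_i}\Xi_G\bigr)\,\Omega_G}{\Xi_G^{k+1}} \;\equiv\; \frac{c}{k}\cdot\frac{\bigl(\sum_i \partial_{\alpha_i}A_i\bigr)\,\Omega_G}{\Xi_G^{k}} \pmod{\text{exact}},
\]
for the appropriate combinatorial constant $c$. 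So the plan is: (1) solve the linear-algebra problem of writing the degree-$2$ numerator $\alpha_1\alpha_4$ (up to the fixed-degree correction coming from $\Omega_G$) in terms of $\{(C\vec\alpha)_i\}$; (2) apply the reduction formula once to drop from pole order $3$ to pole order $2$; (3) read off $B(m,q) = 4q_1 \cdot (\text{that constant}) \in k_S$ and read off $\beta$ as the explicit $2$-form whose exterior derivative accounts for the discrepancy. The coefficients $A_i$ are rational functions of the entries of $C^{-1}$, so $B(m,q)$ comes out as an explicit element of $k_S$, and $\beta \in \Gamma(\Sp(k_S),\Omega^2_{\Pp^3\setminus Q/\Sp(k_S)})$ is exhibited explicitly, as claimed.

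Finally I would promote this relation in $H^3_{\mathrm{dR}}(\Pp^3\setminus Q)_{/S}$ to one in $H^3_{\mathrm{dR}}(\Pp^3\setminus Q,\Delta\setminus Q\cap\Delta)_{/S} = (mot_G')_{\mathrm{dR}}$ via the long exact sequence quoted just before the lemma: because $H^2_{\mathrm{dR}}(\Delta\setminus Q\cap\Delta)_{/S}$ is concentrated in lower weight while $[\omega_G]$ is the weight-$4$ generator, the lift of the Picard--Fuchs relation is unique up to the image of the connecting map, and one checks it is exactly \eqref{Picard-Fuchs} with $[d\beta]$ the term recording that $\beta$ need not extend to the relative complex. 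The main obstacle I anticipate is bookkeeping rather than conceptual: correctly tracking the Euler-field/contraction conventions relating $\Omega_G$ on $\Pp^3$ to $d\alpha_1\wedge d\alpha_2\wedge d\alpha_3$ in an affine chart, and keeping the combinatorial constants in the Griffiths reduction consistent, so that the final $B(m,q)$ matches the normalization used elsewhere in the paper. A secondary subtlety is verifying that $Q$ being a smooth quadric in generic position (which holds on $S$ by the genericity assumptions and the Landau-variety removal) really does make $\{\partial_{\alpha_i}\Xi_G\}$ a regular sequence, so that step (1) is solvable over $k_S$ — but this is immediate from $\det C \ne 0$ in $k_S$.
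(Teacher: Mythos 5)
Your proposal is correct and follows essentially the same route as the paper: differentiate $\omega_G$ under the Gauss--Manin connection, observe the numerator lies in the Jacobian ideal $J(\Xi_G)$ (which, for the smooth quadric, contains all quadrics since the partials $2(C\vec\alpha)_i$ span the linear forms), and apply the Griffiths pole-order reduction to produce $B(m,q)$ and the explicit $2$-form $\beta$, with $[d\beta]$ surviving only in the relative complex. The one slip is computational and harmless to the argument: $q_1$ also enters $\Xi_G$ through $(q_1+q_2)^2\alpha_2\alpha_4$, so $\partial_{q_1}\Xi_G = 2(q_1+q_2)\alpha_2\alpha_4 + 2q_1\alpha_1\alpha_4$ rather than $2q_1\alpha_1\alpha_4$ alone, as in the paper's equation \eqref{relation}.
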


\begin{proof}
We use the general description of rational forms on $\Pp^n$ with poles along a hypersurface \cite{G}.  A 2-form on $\Pp^3$ with a pole along $\Xi_G$ of order 2 is of the form:
\begin{equation}
\label{beta}
\beta = \frac{\sum\limits_{i<j} (-1)^{i+j}\alpha_jA_i - \alpha_iA_j}{\Xi_G^2} d\alpha_1 \wedge \ldots \wedge\widehat{d\alpha_i} \wedge \ldots \wedge\widehat{d\alpha_j} \wedge \ldots \wedge d\alpha_4
\end{equation}
for some linear polynomials $A_i$ in the variables $\alpha_i$. One then computes the exterior derivative
\[
d\beta = -2\frac{\sum_i A_i \frac{\partial \Xi_G}{\partial \alpha_i}}{\Xi^3} \Omega_G +  \frac{\sum_i \frac{\partial{A_i}}{\partial \alpha_i}}{\Xi^2} \Omega_G
\]
In \cite[Proposition 4.6]{G} Griffiths makes a quite general observation that whenever we have a rational form $\frac{A}{F^k} \Omega$ such that $A \in J(F)$, where $J(F)$ is the Jacobian ideal generated by partial derivatives of $F$, we can reduce the order of the pole up to an exact form. To compute $\nabla_{q_1}([\omega_G])$ one differentiates $\omega_G$ with respect to $q_1$ and writes the resulting form in terms of a basis of sections of $H^3(\Pp^3 \setminus Q)_{/S}$, which we have chosen to be $[\omega_G]$. We can use the Groebner basis of $J(\Xi_G)$ to find the $A_i$'s such that we can reduce the pole of $\frac{\partial}{\partial q_1}(\omega_G)$:
\begin{equation}
\label{relation}
\frac{\partial}{\partial q_1}(\omega_G) = \frac{-2(2(q_1+q_2)\alpha_2\alpha_4+2q_1\alpha_1\alpha_4)}{\Xi_G^3}\Omega_G = \frac{\sum_i A_i \frac{\partial \Xi_G}{\partial \alpha_i}}{\Xi_G^3} \Omega_G = \frac{1}{2}\frac{\sum_i \frac{\partial{A_i}}{\partial \alpha_i}}{\Xi_G^2} \Omega_G  - \frac{1}{2} d\beta
\end{equation}
Defining 
\begin{equation}
\label{big-b}
B(m,q) := \frac{1}{2}\sum_i \frac{\partial{A_i}}{\partial \alpha_i}
\end{equation}
which indeed depends only on masses and momenta since $A_i$ are necessarily linear, gives us the stated relation.
\end{proof}

\begin{prop} 
Let $C,D_{j,k}$ are the matrices associated to the quadratic forms $\Xi_G$, $\Xi_{G/\{e_j,e_k\}}$ respectively, and $U = C^{-1}$. Let 
\[
f_{ij}(m,q) = \frac{\sqrt{(U)_{i,j}^2 - (U)_{i,i}(U)_{j,j}}-(U)_{i,j}}{\sqrt{(U)_{i,j}^2 - (U)_{i,i}(U)_{j,j}}+(U)_{i,j}},
\]
as in Theorem 1, and $P = \frac{\sqrt{|\det D_{j,k}|}}{8\sqrt{|\det C|}}$. Then
\begin{equation}
\label{de-rham-period-via-connection}
\left[mot'_{G}, \left[\omega^{ij}_{G}\right]^{\vee}, [\omega_{G}]\right]^{\mathfrak{dr}} = P\log^{\mathfrak{dr}}(f_{ij})\mathbb{L}^{\mathfrak{dr}} .
\end{equation}
\end{prop}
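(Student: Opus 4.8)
The plan is to determine the de Rham period $\Phi_{ij}:=\left[mot'_{G},\left[\omega^{ij}_{G}\right]^{\vee},[\omega_{G}]\right]^{\mathfrak{dr}}$ by combining the mixed Tate structure of $mot'_G$ with the Gauss--Manin connection, as announced in the introduction. First I would isolate the part of $mot'_G$ that $\Phi_{ij}$ actually sees. The functional $\left[\omega^{ij}_G\right]^{\vee}$ annihilates $W_0$ and every weight-$2$ basis vector except $\left[\omega^{ij}_G\right]$, so $\Phi_{ij}$ depends only on the rank-$2$ subquotient $M_{ij}$ of $mot'_G$ with $\mathrm{gr}^W_4 M_{ij}=\mathbb{Q}(-2)\langle\omega_G\rangle$ and $\mathrm{gr}^W_2 M_{ij}=\mathbb{Q}(-1)\langle\omega^{ij}_G\rangle$. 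That this is an honest subquotient uses the degeneration of the spectral sequence \eqref{4-edge-ss} together with the fact that the six face maps $\Phi_{kl,\mathrm{dR}}$ exhibit $\mathrm{gr}^W_2(mot'_G)$ as $\bigoplus_{k<l}\mathrm{gr}^W_2\bigl(mot'_{G/\{e_k,e_l\}}\bigr)$; in particular this yields the orthogonality $\left[mot'_G,\left[\omega^{kl}_G\right]^{\vee},\left[\omega^{ij}_G\right]\right]^{\mathfrak{dr}}=\delta_{\{i,j\},\{k,l\}}\,\mathbb{L}^{\mathfrak{dr}}$. After one Tate twist, $M_{ij}(1)$ is an extension of $\mathbb{Q}(-1)$ by $\mathbb{Q}(0)$, i.e. a Kummer motive, so on general grounds $\Phi_{ij}=\mathbb{L}^{\mathfrak{dr}}\cdot\bigl(\text{a weight-}2\text{ de Rham period}\bigr)=P_{ij}\log^{\mathfrak{dr}}(f_{ij})\,\mathbb{L}^{\mathfrak{dr}}$ (up to a pure $\mathbb{L}^{\mathfrak{dr}}$-term to be killed below) for some $f_{ij}\in k_S^{\times}$ and $P_{ij}\in k_S$; the content of the proposition is to identify $f_{ij}$ and $P_{ij}$.

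Next I would pin down $f_{ij}$ from the connection. Using $\nabla[\mathcal{V},\upsilon,\omega]^{\mathfrak{dr}}=[\mathcal{V},\upsilon,\nabla\omega]^{\mathfrak{dr}}$ and the Picard--Fuchs relation of Lemma \ref{computing-picard-fuchs} (and its analogues in the other kinematic directions), $\nabla[\omega_G]=-B(m,q)[\omega_G]+[d\beta]$, where $[d\beta]$, after reduction of pole order and restriction to the simplex faces $\Delta_k$ inside the total complex \eqref{dbl-complex}, becomes a $k_S$-linear combination $\sum_{k<l}b_{kl}(m,q)\,[\omega^{kl}_G]$ of the weight-$2$ face classes. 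Pairing with $\left[\omega^{ij}_G\right]^{\vee}$ and using the orthogonality above gives the first order inhomogeneous system
\[
\nabla\Phi_{ij}+B(m,q)\,\Phi_{ij}=b_{ij}(m,q)\,\mathbb{L}^{\mathfrak{dr}}.
\]
Substituting $\Phi_{ij}=P_{ij}\log^{\mathfrak{dr}}(f_{ij})\,\mathbb{L}^{\mathfrak{dr}}$, together with $\nabla\mathbb{L}^{\mathfrak{dr}}=0$ and $\nabla\log^{\mathfrak{dr}}(f)=\tfrac{df}{f}$, turns this into an explicit first order ODE determining $d\log f_{ij}$. Running the Groebner-basis computation of Lemma \ref{computing-picard-fuchs} for $\Xi_G$, and using $\Xi_{G/\{e_i,e_j\}}=\Xi_G|_{\alpha_i=\alpha_j=0}$ with matrices $C$ and $D_{i,j}$, makes $B$ and $b_{ij}$ explicit rational functions of $m,q$, and integrating $d\log f_{ij}$ yields exactly $f_{ij}=\dfrac{\sqrt{(U)_{i,j}^2-(U)_{i,i}(U)_{j,j}}-(U)_{i,j}}{\sqrt{(U)_{i,j}^2-(U)_{i,i}(U)_{j,j}}+(U)_{i,j}}$ with $U=C^{-1}$ — the square roots being precisely the ones adjoined to the base in Section 2 so that $mot'_G$ becomes a constant variation.

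Finally I would fix the prefactor $P_{ij}=\frac{\sqrt{|\det D_{i,j}|}}{8\sqrt{|\det C|}}$ and show that there is no leftover pure $\mathbb{L}^{\mathfrak{dr}}$-term. This comes from two inputs already available: the normalization of $\theta^1_{G/\{e_i,e_j\}}$, whose motivic period was computed in Section 2 to be $\frac{1}{\sqrt{4|\det D_{i,j}|}}\log^{\mathfrak{m}}([p_0p_1|u^0u^1])$ — so $\omega^{ij}_G=\Phi_{ij,\mathrm{dR}}(\theta^1_{G/\{e_i,e_j\}})$ carries a factor $\sqrt{4|\det D_{i,j}|}$ relative to the standard logarithmic class — and the explicit coefficient $b_{ij}$ of the Picard--Fuchs relation, where the overall $\tfrac12$ in Lemma \ref{computing-picard-fuchs} accounts for the factor $2$ inside the $8$. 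The constant of integration (the possible pure $\mathbb{L}^{\mathfrak{dr}}$-term) is then killed by matching a limiting value, or equivalently by comparing residues of $\omega_G$ along $Q$ with residues of $\theta^1_{G/\{e_i,e_j\}}$ along $Q_{G/\{e_i,e_j\}}$ through the face map — the residue route that is used for the triangle graph in the next section. I expect the main obstacle to be exactly this explicit bookkeeping: integrating the Picard--Fuchs ODE into the stated closed form in the entries of $U=C^{-1}$, and tracking every square-root and factor-of-$2$ normalization so that the constant is precisely $P_{ij}$; the structural input (Kummer subquotient, orthogonality of the face classes) is comparatively routine.
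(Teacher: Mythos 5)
Your proposal follows essentially the same route as the paper's proof: the paper likewise pairs the Picard--Fuchs relation of Lemma \ref{computing-picard-fuchs} against $\left[\omega^{ij}_{G}\right]^{\vee}$, reduces $[d\beta]$ to the face classes $\Phi_{kl,dR}([\beta_{kl}])$ by a two-step descent in the total complex (using $H^2(\Delta_k \setminus Q \cap \Delta_k)=0$ to take primitives on the $2$-faces), identifies $a_{ij} = 2P\frac{1}{f_{ij}}\frac{\partial f_{ij}}{\partial q_1}$ by comparison with $\nabla_x\bigl(P\log^{\mathfrak{dr}}(x)\mathbb{L}^{\mathfrak{dr}}\bigr)$, and fixes the remaining constant by specializing to a convenient kinematic point, just as you suggest. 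The only cosmetic differences are your explicit Kummer-subquotient and orthogonality preamble, which the paper uses implicitly through its adapted basis, and your plan to integrate the ODE from scratch where the paper instead verifies the stated $f_{ij}$ against the explicitly computed coefficient $a_{ij}$.
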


\begin{proof} 
Let $G$ and $\beta$ be as in the previous lemma. We first show that $[d\beta]$, viewed as a section of $H^3_{\mathrm{dR}}(\Pp^3\setminus Q, \Delta \setminus Q \cap \Delta)_{/S}$ over the generic point, can be written as a sum of images under the face maps (\ref{face-map})
\[
\Phi_{ij,dR} : \Gamma(\Sp(k_S),(mot'_{G/\{e_i,e_j\}})_{\mathrm{dR}}) \rightarrow \Gamma(\Sp(k_S), (mot'_G)_{\mathrm{dR}}).
\]
We will always work over the generic point of the space of kinematics throughout this proof, and we drop the explicit reference to $\Sp(k_S)$ to ease the notation. 

Because $\Pp^3 \setminus Q$ is affine $[d\beta]$  is represented by the cocycle of the total complex of the double complex:
\[
(d\beta,0,\ldots,0) \in \bigoplus_{3=p+q} \bigoplus_{|I|=p} \Gamma(\Delta_I \setminus Q \cap \Delta_I, \Omega^q_{\Delta_I \setminus Q \cap \Delta_I })
\]
Then 
\begin{equation}
d_{Tot}^2(\beta,0,\ldots,0) = (d\beta,0,\ldots,0) + (0,\beta|_{\Delta_1},\beta|_{\Delta_2},\beta|_{\Delta_3},\beta|_{\Delta_4},0,\ldots,0)
\end{equation}
where $d_{Tot}^2$ is the differential of the total complex defined in (\ref{diff-of-total-complex}). As sections of $(mot'_G)_{\mathrm{dR}}$ over the generic point, we have 
\begin{equation}
\label{first-reduction}
[d\beta] = - [(0,\beta|_{\Delta_1},\beta|_{\Delta_2},\beta|_{\Delta_3},\beta|_{\Delta_4},0,\ldots,0)]
\end{equation}
Note that each $\beta|_{\Delta_i} \in \Gamma(\Delta_i \setminus Q \cap \Delta_i, \Omega^2_{\Delta_i \setminus Q \cap \Delta_i})$, but since $H^2(\Delta_i \setminus Q) \cong H^2(\Pp^2 \setminus Q) \cong 0$ by (\ref{absolute-grps}), there must exist forms $\beta_i \in \Gamma(\Delta_i \setminus Q \cap \Delta_i, \Omega^1_{\Delta_i \setminus Q \cap \Delta_i})$ such that $d\beta_i = \beta|_{\Delta_i}$. Using this fact we see that we can write for each $\beta_i$:
\begin{equation}
\label{second-reduction}
d^1_{Tot}(0,...,0,\beta_i,...,0) = (0,\ldots,\beta|_{\Delta_i},\ldots,0) - (0,...,0,r^i_{j}(\beta_i),r^i_{k}(\beta_i),r^i_{l}(\beta_i),0,...,0)
\end{equation}
where $r^i_{j}$ is the restriction of differential forms on $\Delta_i$ to forms on $\Delta_j$ with appropriate signs.
Combining (\ref{first-reduction}) and (\ref{second-reduction}) for each $i$ we get:
\begin{equation}
[d\beta] = -\left[\sum_{0 \leq i < j \leq 3}(0,...,r^i_{j}(\beta_i)+r^j_{i}(\beta_j),0,...,0)\right]
\end{equation}
Note that in the previous equation we get two contributions for each 1-face $\Delta_{\{i,j\}}$ - one from first restricting $\beta$ to the 2-face $\Delta_i$, taking the primitive of the restriction, then in turn restricting that primitive to the 1-face $\Delta_{\{i,j\}}$, and the other by the same procedure, but starting by restricting to $\Delta_j$. If we write
\begin{equation}
\label{beta-ij}
\beta_{ij} = (r^i_{j}(\beta_i)+r^j_{i}(\beta_j),0,0) \in  \bigoplus_{3=p+q} \bigoplus\limits_{\substack{|I|=p\\{i,j} \subset I}} \Gamma(\Delta_I \setminus Q \cap \Delta_I, \Omega^q_{\Delta_I \setminus Q \cap \Delta_I})
\end{equation}
where the latter is the double complex which computes $H^1(\Delta_{\{i,j\}} \setminus Q \cap \Delta_{\{i,j\}}, \Delta_{\{k,l\}} \cap \Delta_{\{i,j\}} )$, for $i,j,k,l$ pairwise distinct. Therefore we proved: $[d\beta] = - \sum_{1 \leq i < j \leq 4} \Phi_{ij}([\beta_{ij}])$. 

Since $[\beta_{ij}]$ is a section of $(mot'_{G/\{e_i,e_j\}})_{\mathrm{dR}})$ over the generic point, we can write 
\[
[\beta_{ij}] = a_{ij}(m,q)[\theta^1_{G/\{e_i,e_j\}}],
\]
where $a_{ij}(m,q) \in k_S$. For each $1 \leq i < j \leq 4$ we have equalities of motivic families of motivic periods:
\begin{equation}
\label{connection-on-de-rham-period}
\begin{split}
& \left[mot'_G, \left[\omega^{ij}_G\right]^{\vee}, \nabla_{q_1}([\omega_G])\right]^{\mathfrak{dr}} = \left[mot'_G, \left[\omega^{ij}_G\right]^{\vee}, [B(m,q)\omega_G -\frac{1}{2}d\beta]\right]^{\mathfrak{dr}} = \\
& = \left[mot'_G, \left[\omega^{ij}_G\right]^{\vee}, \left([B(m,q)\omega_G] +\frac{1}{2}\sum_{1 \leq i < j \leq 4} \Phi_{ij,dR}([\beta_{ij}]) \right)\right]^{\mathfrak{dr}} = \\
& = B(m,q)\left[mot'_G, \left[\omega^{ij}_G\right]^{\vee}, [\omega_G]\right]^{\mathfrak{dr}} + \frac{a_{ij}(m,q)}{2}\left[mot'_{G/\{e_i,e_j\}}, \left[\theta^1_{G/\{e_i,e_j\}}\right]^{\vee}, \left[\theta^1_{G/\{e_i,e_j\}}\right]\right]^{\mathfrak{dr}} = \\
& = B(m,q)\left[mot'_{G/\{e_i,e_j\}}, \left[\omega^{ij}_G\right]^{\vee}, [\omega_G]\right]^{\mathfrak{dr}} + \frac{a_{ij}(m,q)}{2}\mathbb{L}^{\mathfrak{dr}}
\end{split}
\end{equation}
where the second to last equality holds because $\Phi_{ij,dR}\left(\left[\theta^1_{G/\{e_i,e_j\}}\right]\right) = \left[\omega^{ij}_G\right]$.

On the other hand one can check that
\[
\nabla_{x}(P\log^{\mathfrak{dr}}(x)\mathbb{L}^{\mathfrak{dr}}) = \frac{\partial}{\partial x}(P)\log^{\mathfrak{dr}}(x)\mathbb{L}^{\mathfrak{dr}} + P\frac{1}{x}\mathbb{L}^{\mathfrak{dr}}.
\]
Comparing this expression with \eqref{connection-on-de-rham-period}, we see that $a_{ij}(m,q) = 2 P \frac{1}{f_{ij}}\frac{\partial f_{ij}}{\partial q_1}$, and we obtain the result up to a constant:
\[
\left[mot'_{G}, \left[\omega^{ij}_{G}\right]^{\vee}, [\omega_{G}]\right]^{\mathfrak{dr}} = P\log^{\mathfrak{dr}}(cf_{ij})
\]
The constant $c$ can be determined to be 1 by specializing to a convenient point in the space of kinematics. 
\end{proof}

\begin{rmk}
Note that an equivalence of de Rham periods induced by a face map $\Phi_{ij,dR}$ in the previous proposition is a version of Stokes' theorem for de Rham periods.
\end{rmk}
\end{subsubsection}

\begin{subsubsection}{Residues and the de Rham projection}
Since $\left[\omega^{ij}_G\right]$ and $[\omega_G]$ are sections over the generic point of $H^3_{\mathrm{dR}}(\Pp^3 \setminus Q, (\Delta_i \cup \Delta_j) \setminus Q \cap (\Delta_i \cup \Delta_j))_{/S}$, and we are interested in the de Rham period $\left[mot'_G,\left[\omega^{ij}_G\right]^{\vee},[\omega_G]\right]^{\mathfrak{dr}}$, which only depends on $\Delta_i \cup \Delta_j \subset \Delta$, we can restrict our attention to the fiber over the generic point of
\[
H^3(\Pp^3 \setminus Q, (\Delta_i \cup \Delta_j) \setminus Q \cap (\Delta_i \cup \Delta_j))_{/S}.
\]
Consider the residue map:
\[
H^3(\Pp^3 \setminus Q, (\Delta_i \cup \Delta_j) \setminus Q \cap (\Delta_i \cup \Delta_j)) \xrightarrow{\textrm{Res}} H^2(Q,(\Delta_i \cup \Delta_j) \cap Q)(-1),
\]
where we have restricted to the fiber over the generic point. Denote the former by $H_G'$, and the latter by $H_Q$. Consider the motivic period:
\[
[H_G^{\prime},[\textrm{Tube}(\sigma_{Lune})],[\omega_G]]^{\mathfrak{m}} = [H_Q,[\sigma_{Lune}],\textrm{Res}([\omega_G])]^{\mathfrak{m}}
\]
where $\sigma_{Lune}$ is the spherical lune cut out by the two hyperplanes $\Delta_i$ and $\Delta_j$ on $Q(\mathbb{C})$, and $\textrm{Tube}$ denotes the tubular neighbourhood homomorphism -- it is the transpose of the residue morphism in the Betti realization. The period of $[H_Q,[\sigma_{Lune}],\textrm{Res}(\omega_G)]^{\mathfrak{m}}$ can be computed in spherical coordinates (see also \cite[5.3.3]{Brown3}). Since we are working with a smooth quadric we can reduce to the case of a sphere $\alpha_1^2 + \alpha_2^2 + \alpha_3^2 = \alpha_4^2$, where one checks that the corresponding period is $\frac{1}{2i}\theta_{ij}$, where $\theta_{ij}$ is the angle between $\Delta_i$ and $\Delta_j$.

To show the relation of this computation with the de Rham period we are interested in, one should show that $\textrm{Res}^{\vee}\left(\left[\omega_Q^{ij}\right]\right) = \left[\omega^{ij}_G\right]^{\vee}$, where $\textrm{Res}^{\vee}$ is the dual of the residue homomorphism, and $\omega_Q^{ij}$ is a form on $Q$ with simple poles at $Q \cap \Delta_i \cap \Delta_j$. The \textit{de Rham projection} is a certain natural morphism associating to an effective period of a separated motive a de Rham period of the same motive (see \cite[\S 4]{BD}), and the de Rham period $\left[H_Q,\left[\omega_Q^{ij}\right]^{\vee},[\omega_G]\right]^{\mathfrak{dr}}$ is the image, under the de Rham projection, of the motivic period $\left[H_Q,\left[\sigma_{Lune}\right],[\omega_G]\right]^{\mathfrak{m}}$. If the previous statement about the transpose of the residue homomorphism is true, we have an equivalence of de Rham periods $\left[H_Q,\left[\omega_Q^{ij}\right]^{\vee},[\omega_G]\right]^{\mathfrak{dr}} = \left[mot'_G,\left[\omega^{ij}_G\right]^{\vee},[\omega_G]\right]^{\mathfrak{dr}}$, and have therefore associated a motivic period to the de Rham period we are interested in.
\end{subsubsection}

\begin{subsubsection}{Comparing with an analytic expression for the Feynman integral}
We are going to apply the motivic coaction to an expression for the Feynman integral of the 1-loop 4-edge graph in 4 dimensions as it appears in the physics literature \cite[Proposition 8.]{OW}, and check that it matches the coaction computed in the previous proposition. As a consequence, we will see that the coaction is both shorter and more symmetrical than the full expression of the Feynman integral in terms of dilogarithms as found in \cite{OW}, and obtain a compact expression for the arguments of the de Rham logarithms studied in this subsection.

The expression given in \cite{OW} for $I_G$ is as follows: let $C$ be the $4\times4$ matrix associated to the quadratic form $\Xi_G$, and $U = C^{-1}$. Let
\begin{equation}
\begin{split}
& \nu^0(r,s,t) := \arctan\left(\frac{C_{t,4}U_{r,4}\sqrt{|\det U|}}{U_{r,s}U_{r,4}-U_{r,r}U_{s,4}}\right)\\
& \nu^1(r,s,t) := \arctan\left(\frac{C_{t,4}U_{r,4}\sqrt{|\det U|}}{(U_{r,s}U_{r,4}-U_{r,r}U_{s,4})\sqrt{1-C_{4,4}U_{4,4}}}\right)\\
& \nu^2(r,s,t) := \arctan\left(\frac{U_{r,4}\sqrt{U_{r,r}U_{s,s}-U^2_{r,s}}}{U_{r,s}U_{r,4}-U_{r,r}U_{s,4}}\right)\\
& \nu^3(r,s,t) := \arctan\left(\frac{U_{r,4}}{\sqrt{U_{r,r}U_{4,4}-U^2_{r,4}}}\right)
\end{split}
\end{equation}
then the family of periods, depending on masses and momenta, of interest is:
\begin{equation}
\label{OW-expression}
\begin{split}
I_G = \frac{1}{16 \sqrt{|\det C|}} & \sum\limits_{\{r,s,t\} \in S_3} (2\Imm\Li_2(\exp(2\nu^0(r,s,t))) + \\ 
& + \sum\limits_{l=1}^3 (-1)^l[\Imm\Li_2(\exp(2\nu^0(r,s,t) + 2\nu^l(r,s,t))) \\
&  + \Imm\Li_2(\exp(2\nu^0(r,s,t) - 2\nu^l(r,s,t)))]) .
\end{split}
\end{equation}
Note that the above expression consists of a linear combination of 42 dilogarithms. We replace the dilogarithms with their motivic versions, and apply the coaction to each motivic dilogarithm, which reads:
\[
\Delta \Li_2^{\mathfrak{m}}(x) = \Li_2^{\mathfrak{m}}(x) \otimes \mathbb{L}^{\mathfrak{dr}} + \Li_1^{\mathfrak{m}}(x) \otimes \log^{\mathfrak{dr}}(x)\mathbb{L}^{\mathfrak{dr}} + 1 \otimes \Li_2^{\mathfrak{dr}}(x).
\]
Taking care to note that the arguments are all on the unit circle, we get a linear combination of 42 terms of the form 
\begin{equation}
\label{coaction-on-OW}
\begin{split}
& \Delta\Imm(\Li^{\mathfrak{m}}_2(z)) = \Delta \frac{1}{2i}\left(\Li^{\mathfrak{m}}_2(z) - \Li^{\mathfrak{m}}_2(\bar{z})\right) = \\
&= \frac{1}{2i}\left(\left(\Li_2^{\mathfrak{m}}(z) - \Li_2^{\mathfrak{m}}(\bar{z})\right) \otimes (\mathbb{L}^{\mathfrak{dr}})^2 + \Li_1^{\mathfrak{m}}(z) \otimes \log^{\mathfrak{dr}}(z) - \Li_1^{\mathfrak{m}}\left(\frac{1}{z}\right) \otimes \log^{\mathfrak{dr}}\left(\frac{1}{z}\right) + 1 \otimes \left(\Li^{\mathfrak{dr}}_2(z) - \Li^{\mathfrak{dr}}_2(\bar{z})\right)\right) = \\
&= \frac{1}{2i}\left(\left(\Li_2^{\mathfrak{m}}(z) - \Li_2^{\mathfrak{m}}(\bar{z})\right) \otimes (\mathbb{L}^{\mathfrak{dr}})^2 + \log^{\mathfrak{m}}\left(\frac{(1-z)^2}{z}\right) \otimes \log^{\mathfrak{dr}}(z) + 1 \otimes \left(\Li^{\mathfrak{dr}}_2(z) - \Li^{\mathfrak{dr}}_2(\bar{z})\right)\right)
\end{split}
\end{equation}

Notice that in Theorem 1 we have 6 terms of the form $\log^{\mathfrak{m}}(f_i) \otimes \log^{\mathfrak{dr}}(g_i)$ (up to prefactors), while in the coaction (\ref{coaction-on-OW}) on (\ref{OW-expression}) we have 42 such terms.

All the computations below were done in Maple. In order to check that the two expressions are equivalent, we take the 6 motivic logarithms from Theorem 1 and the 42 motivic logarithms from the coaction on (\ref{OW-expression}), and we find a basis for these 48 functions which includes the 6 motivic logarithms in Theorem 1. One can do this by applying the LLL algorithm to a matrix whose entries are the evaluations of the ($q_1$-derivative) of the 48 motivic logarithms at sufficiently many points in the space of generic kinematics. The basis found contains 27 motivic logarithms. One then checks that the relations found indeed hold on the level of the logarithms themselves and expresses the non-basis elements in terms of the basis. Plugging this back into the original expression for the coaction on $I^{\mathfrak{m}}_G$ in (\ref{OW-expression}) and collecting the de Rham logarithms with each of the 27 basis motivic logarithms, one then repeats the procedure of finding a basis for the de Rham logarithms one is left with on the right hand side of the tensor product. The basis on the de Rham side contains 20 logarithms. Expressing the non-basis de Rham logarithms in terms of these 20 and plugging this back into the previous expression with the motivic side reduced to 27 terms one observes that everything cancels out but 6 terms:
\begin{equation}
 \sum_{1 \leq j < k \leq 4}
2\log^{\mathfrak{m}}\left(\left[p_0p_1 | u_{\{j,k\}}^0u_{\{j,k\}}^1\right]\right) \otimes  \log^{\mathfrak{dr}}\left(\frac{\sqrt{(U)_{j,k}^2 - (U)_{j,j}(U)_{k,k}}-(U)_{j,k}}{\sqrt{(U)_{j,k}^2 - (U)_{j,j}(U)_{k,k}}+(U)_{j,k}}\right) 
\end{equation}
If $a_{jk}(m,q)$ are computed as in the previous proposition, and $D_{j,k}$ is the matrix associated to the quadratic form $\Xi_{G/\{e_j,e_k\}}$, we can check that
\[
a_{jk}(m,q) = \frac{\sqrt{|\det D_{j,k}}|}{4 \sqrt{|\det C|}}  \frac{1}{f_{j,k}}\frac{\partial f_{j,k}}{\partial q_1} .
\]
 As another check one can observe that $\frac{\partial }{\partial q_1} \left(\frac{1}{16 \sqrt{|\det C|}}\right) = B(m,q)$, where $B(m,q)$ is computed as in the Lemma 3.
\end{subsubsection}
\end{subsection}
\end{section}

\begin{section}{The triangle graph: motives and coaction} 
In this section we study the motives and coaction of the triangle graph, both with non-vanishing and vanishing masses.
\label{triangle}
\begin{subsection}{The triangle graph with non-vanishing masses}
\begin{subsubsection}{Motive of the triangle graph with non-vanishing masses}

In the case of the triangle graph we must work with the full graph motive because there is a linear part as well as the quadric in the polar locus of the integrand for $d=4$ dimensions. We start with the case when $F=M=N_G=3$, i.e., all masses and momenta are non-vanishing. In this case the motive of interest is:
\[
mot_G = H^2(\Pp^2 \setminus (Q \cup L), \Delta \setminus (Q \cup L) \cap \Delta)_{/S}
\]
We will need the following lemma to compute the semi-simplification of $mot_G$. We will make use of the following spectral sequence, called the \textit{Gysin spectral sequence} \cite{Deligne2}. Let $D$ be a simple normal crossing divisor in $X$, and let $D_I = D_{i_1} \cap ... \cap D_{i_k}$  where $D_i$ are the irreducible components of $D$, and $I = \{i_1,\ldots, i_k\}$, finally let $D_{\emptyset} = X$. Then we have the following spectral sequence:
\begin{equation}
\label{helful-spec-seq}
E_1^{-p,q} = \bigoplus_{|I| = p} H^{q-2p}(D_I)(-p) \Rightarrow H^{-p+q}(X \setminus D)
\end{equation}
where the $d_1^{-p,q} : E_1^{-p,q} \rightarrow E_1^{-p+1,q}$ is the alternating sum of Gysin homomorphisms
\[
H^{q-2p}(D_I)(-1) \rightarrow H^{q-2p+2}(D_{I\setminus \{i\} })
\]
for each $i \in I$, multiplied by the appropriate sign.
\begin{lemma} Let $Q \subset \Pp^2$ be a smooth quadric, and $L \subset \Pp^2$ be a projective line. Then
\[
H^1(\Pp^2 \setminus (Q \cup L)) \cong \mathbb{Q}(-1) \quad and \quad H^2(\Pp^2 \setminus (Q \cup L)) \cong \mathbb{Q}(-2).
\]
\end{lemma}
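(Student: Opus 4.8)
The plan is to compute both groups at once from the Gysin spectral sequence \eqref{helful-spec-seq} applied to $X=\Pp^2$ with the divisor $D = Q\cup L$. I tacitly restrict to the locus of parameters on which $Q$ and $L$ meet transversally, so that $D$ is simple normal crossing with irreducible components $D_1 = Q \cong \Pp^1$ and $D_2 = L \cong \Pp^1$, and $D_{12} = Q\cap L$ a reduced pair of points; every space in sight ($\Pp^2$, $\Pp^1$, a point) has Tate cohomology, so each $E_1^{-p,q}$ is a sum of copies of some $\Q(-k)$ and the whole argument reduces to tracking ranks and Tate twists. The transversality assumption is harmless: the mixed Hodge structure on $H^\bullet(\Pp^2\setminus(Q\cup L))$ is locally constant in families of such configurations, so one may compute it for a single transversal pair, e.g. $Q = V(\alpha_1\alpha_2-\alpha_3^2)$ and $L = V(\alpha_3)$.

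The entries of the $E_1$-page that matter are $E_1^{0,q} = H^q(\Pp^2)$, $E_1^{-1,q} = H^{q-2}(Q)(-1)\oplus H^{q-2}(L)(-1)$, and $E_1^{-2,q} = H^{q-4}(D_{12})(-2)$, and the only differentials needed are the $d_1$'s, which are signed sums of Gysin maps. For $H^1$: since $E_1^{0,1} = H^1(\Pp^2) = 0$, the degree-$1$ cohomology is the kernel of $d_1^{-1,2} : H^0(Q)(-1)\oplus H^0(L)(-1) \to H^2(\Pp^2)$, which up to signs is $(a,b)\mapsto (\deg Q)\,a - (\deg L)\,b = 2a-b$ on the generator of $H^2(\Pp^2) \cong \Q(-1)$; this is surjective with one-dimensional kernel, so $E_2^{-1,2}\cong \Q(-1)$ and $H^1(\Pp^2\setminus(Q\cup L)) \cong \Q(-1)$. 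For $H^2$ the three potential contributions sit in $E_1^{0,2}$, $E_1^{-1,3}$ and $E_1^{-2,4}$: the first dies because $d_1^{-1,2}$ is surjective; the second is $H^1(\Pp^1)(-1)^{\oplus 2} = 0$; and for the third, $E_1^{-2,4} = H^0(D_{12})(-2)\cong \Q(-2)^{\oplus 2}$ maps by $d_1^{-2,4}$ to $E_1^{-1,4} = H^2(Q)(-1)\oplus H^2(L)(-1) \cong \Q(-2)^{\oplus 2}$ via the signed sum of the inclusions of the two points into $Q$ and into $L$. Because the two points of $D_{12}$ are homologous on each of the two $\Pp^1$'s, this map has rank $1$, so its kernel is $\cong \Q(-2)$; since $E_1^{-3,4} = 0$ this kernel is all of $E_2^{-2,4}$. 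Hence $H^2(\Pp^2\setminus(Q\cup L)) \cong \Q(-2)$.

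It remains to check that the spectral sequence degenerates at $E_2$ for the two surviving classes: with only two components the columns $-p$ with $p\geq 3$ vanish, so the only higher differential acting on $E_2^{-2,4}$ is $d_2^{-2,4}:E_2^{-2,4}\to E_2^{0,3}$, whose target is $H^3(\Pp^2)=0$; every higher differential out of $E_2^{-1,2}$ has target bidegree with first index positive, hence zero; and all incoming higher differentials originate from the vanishing columns with $|I|\geq 3$. So $E_\infty=E_2$ for these classes, and the computation above is final. The step I expect to require the most care is the evaluation of $d_1^{-2,4}$: one must fix the sign in the alternating Gysin differential and verify that the two intersection points do not combine into a rank-$2$ map — it is rank $1$ precisely because a point class generates $H^2$ of each component $\Pp^1$ regardless of which point it is, so both basis vectors of $H^0(D_{12})(-2)$ have the same image.
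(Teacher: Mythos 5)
Your proposal is correct and follows essentially the same route as the paper: both compute $H^1$ and $H^2$ of the complement via the Gysin spectral sequence for the normal crossing divisor $Q\cup L$, identifying $H^1\cong\mathbb{Q}(-1)$ as the kernel of the Gysin map $H^0(Q)(-1)\oplus H^0(L)(-1)\to H^2(\Pp^2)$ (using $[Q]=2[L]$) and $H^2\cong\mathbb{Q}(-2)$ as the one-dimensional kernel of $H^0(Q\cap L)(-2)\to H^2(Q)(-1)\oplus H^2(L)(-1)$. Your extra remarks on transversality and $E_2$-degeneration are fine but not substantively different from what the paper leaves implicit.
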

\begin{proof}
Next we need to compute $H^1(\Pp^2 \setminus (Q \cup L))$ and $H^2(\Pp^2 \setminus (Q \cup L))$, which we can do using the Gysin spectral sequence. Let us compute the relevant part of the first page. We will need the following elements:
\begin{equation}
\begin{split}
& E_1^{-2,4} \cong H^0(Q \cap L)(-2) \cong \mathbb{Q}(-2)^{\oplus 2} \\
& E_1^{-1,4} \cong H^2(Q)(-1) \oplus H^2(L)(-1) \cong \mathbb{Q}(-2)^{\oplus 2} \\
& E_1^{0,4} \cong H^4(\Pp^2) \cong \mathbb{Q}(-2) \\
& E_1^{-1,2} \cong H^0(Q)(-1) \oplus H^0(L)(-1) \cong \mathbb{Q}(-2)^{\oplus 2} \\
& E_1^{0,2} \cong H^2(\Pp^2) \cong \mathbb{Q}(-1) \\
& E_1^{0,0} \cong H^0(\Pp^2) \cong \mathbb{Q}(0)
\end{split}
\end{equation}
From this we can see that the first page of the spectral sequence is
\begin{equation}
\label{eq10}
\begin{tikzcd}[row sep=small, column sep=tiny]
0 \arrow{r} & \mathbb{Q}(-2)^{\oplus 2} \arrow{r} & \mathbb{Q}(-2)^{\oplus 2} \arrow {r} & \mathbb{Q}(-2) \\
0 \arrow{r} & 0  \arrow{r} & 0 \arrow {r} & 0 \\
0 \arrow{r} & 0 \arrow{r} & \mathbb{Q}(-1)^{\oplus 2} \arrow{r} & \mathbb{Q}(-1) \\
0 \arrow{r} & 0  \arrow{r} & 0 \arrow {r} & 0 \\
0 \arrow{r} & 0  \arrow{r} & 0 \arrow {r} & \mathbb{Q}(0) \\
\end{tikzcd}
\end{equation}

Let $v_L$ and $v_Q$ be generators of $H^0(L)(-1)$ and $H^0(Q)(-1)$ respectively. Then on the second row we have the Gysin morphisms $H^0(L)(-1) \xrightarrow{G} H^2(\Pp^2)$, and $H^0(Q)(-1) \xrightarrow{G} H^2(\Pp^2)$, the images of which are the fundamental classes $G(v_L)=[L]$, $G(v_Q)=[Q]$, respectively. Moreover, $[L]$ generates $H^2(\Pp^2)$, and $[Q]=2[L]$, since $Q$ intersects $L$ in two points, generically. Therefore, the morphism $\mathbb{Q}(-1)^{\oplus 2} \rightarrow \mathbb{Q}(-1)$ is surjective, and the kernel is generated by $(2v_L,-v_Q)$. Hence $gr^W_2H^2(\Pp^2 \setminus Q \cup L) \cong 0$ and $gr^W_2H^1(\Pp^2 \setminus Q \cup L) \cong \mathbb{Q}(-1)$. 

Finally, $gr^W_4H^2(\Pp^2 \setminus Q \cup L)$ is computed by the kernel of the map $E^1_{-2,4} \rightarrow E^1_{-1,4}$. The kernel of $H^0(Q \cap L)(-2) \rightarrow H^2(L)(-1)$ is one-dimensional becuase $H^2(L \setminus Q \cap L) = 0$, and similarly the kernel of the other Gysin map $H^0(Q \cap L)(-2) \rightarrow H^2(Q)(-1)$ is one-dimensional. Hence $gr^W_4H^2(\Pp^2 \setminus Q \cup L) \cong \mathbb{Q}(-2)$. 
\end{proof}
\begin{lemma}
\label{triangle-general}
The weight graded pieces of the one-loop triangle graph motive $mot_G$ with non-vanishing masses and momenta are
\[
\text{gr}^Wmot_G \cong \mathbb{Q}(-2)_{/S} \oplus \mathbb{Q}(-1)^{\oplus 5}_{/S} \oplus \mathbb{Q}(0)_{/S}
\]
\end{lemma}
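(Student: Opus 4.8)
The plan is to compute $\text{gr}^W mot_G$ with the same relative cohomology spectral sequence used in Proposition \ref{weight-graded-pieces}, now accounting for the extra line $L$ in the complement. Write $B = Q \cup L$, so $Q = V(\Xi_G)$ is a smooth conic, $L = V(\Psi_G)$ is the line $\alpha_1 + \alpha_2 + \alpha_3 = 0$, and $\Delta = \Delta_1 \cup \Delta_2 \cup \Delta_3$ is the coordinate triangle. For generic, non-vanishing masses and momenta, $Q$, $L$ and $\Delta$ are in general position: each $\Delta_i \cong \Pp^1$ meets $Q$ in two distinct points and $L$ in a third point, and none of the coordinate points $\Delta_{ij}$ lies on $B$ (as $\Xi_G$ and $\Psi_G$ are non-zero there for non-vanishing masses). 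Each of these is a codimension-one condition on the kinematics, hence contained in the Landau variety $L_G$ removed in the definition of $S$, so the discussion below applies fibrewise over $S(\mathbb{C})$.

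First I would write down the $E_1$-page of
\[
E_1^{p,q} = \bigoplus_{|I|=p} H^q\big(\Delta_I \setminus (B \cap \Delta_I)\big) \Rightarrow H^{p+q}\big(\Pp^2 \setminus B,\ \Delta \setminus (B \cap \Delta)\big).
\]
The $p=0$ column is $H^\bullet(\Pp^2 \setminus B)$, which by the previous lemma is $\mathbb{Q}(0), \mathbb{Q}(-1), \mathbb{Q}(-2)$ in degrees $0, 1, 2$. For $p=1$ each $\Delta_i$ is a $\Pp^1$ minus three points, contributing $H^0 = \mathbb{Q}(0)$ and $H^1 = \mathbb{Q}(-1)^{\oplus 2}$ (three copies). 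For $p=2$ each $\Delta_{ij}$ is a point off $B$, contributing $H^0 = \mathbb{Q}(0)$ (three copies) and nothing in higher degree. In particular $E_1^{p,q} = 0$ for $q \geq 3$, ruling out weight $> 4$. Reading the antidiagonal $p+q=2$: the weight-$4$ piece is $E_\infty^{0,2} = \mathbb{Q}(-2)$, with no incoming or outgoing differentials; the weight-$0$ piece is $E_\infty^{2,0}$, where the bottom row $E_1^{\bullet,0}$ is the augmented simplicial cochain complex $\mathbb{Q}(0) \to \mathbb{Q}(0)^{\oplus 3} \to \mathbb{Q}(0)^{\oplus 3}$ of the incidence complex of $\Delta$ --- a triangle, homotopy equivalent to $S^1$ --- so that $E_2^{0,0} = E_2^{1,0} = 0$ and $E_2^{2,0} \cong \mathbb{Q}(0)$, which survives; the weight-$2$ piece is $E_\infty^{1,1} = \coker\big(d_1 \colon H^1(\Pp^2 \setminus B) \to \bigoplus_i H^1(\Delta_i \setminus B \cap \Delta_i)\big)$, a map $\mathbb{Q}(-1) \to \mathbb{Q}(-1)^{\oplus 6}$.

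The one nontrivial point --- and the main obstacle --- is to verify that this last $d_1$ is injective, so that its cokernel is $\mathbb{Q}(-1)^{\oplus 5}$; everything else is bookkeeping. A generator of $H^1(\Pp^2 \setminus B) \cong \mathbb{Q}(-1)$ is $\omega = d\log(\Xi_G / \Psi_G^2)$, the $d\log$ of the unique (up to scalar) degree-zero rational function on $\Pp^2$ with zeros and poles in $B$; it is non-zero in cohomology since $\Xi_G/\Psi_G^2$ is non-constant. Its restriction to $\Delta_i$ is $d\log\big(\Xi_G|_{\Delta_i}/(\Psi_G|_{\Delta_i})^2\big)$, the logarithmic derivative of a non-constant rational function on $\Pp^1$; it is non-zero in $H^1(\Delta_i \setminus B \cap \Delta_i)$ because its residue at $L \cap \Delta_i$ equals $-2 \neq 0$ (using that $L \cap \Delta_i \notin Q$). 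Hence $d_1$ is injective, $E_2^{1,1} = E_\infty^{1,1} \cong \mathbb{Q}(-1)^{\oplus 5}$, and moreover $E_2^{0,1} = \ker d_1 = 0$, which is precisely what guarantees that no higher differential hits $E_2^{2,0}$. Assembling the three antidiagonal pieces gives $\text{gr}^W H^2(\Pp^2 \setminus B,\ \Delta \setminus B \cap \Delta) \cong \mathbb{Q}(0) \oplus \mathbb{Q}(-1)^{\oplus 5} \oplus \mathbb{Q}(-2)$. Finally, to pass from fibres to a statement over $S$, I would argue exactly as in the multi-edge case treated above: the only monodromy that could obstruct constancy is the interchange of the two points of $Q \cap \Delta_i$, and this is trivialized after adjoining square roots of the relevant discriminants to the base; over that cover (again denoted $S$) the $E_1$-terms, and hence the graded quotients, are constant variations of Tate structure, yielding $\text{gr}^W mot_G \cong \mathbb{Q}(-2)_{/S} \oplus \mathbb{Q}(-1)^{\oplus 5}_{/S} \oplus \mathbb{Q}(0)_{/S}$.
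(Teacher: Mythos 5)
Your proposal is correct and follows essentially the same route as the paper: the relative cohomology spectral sequence for $(\Pp^2\setminus(Q\cup L),\ \Delta\setminus(Q\cup L)\cap\Delta)$, with the leftmost column supplied by the preceding lemma, each face $\Delta_i$ contributing $\Pp^1$ minus three points, and the bottom row giving the weight-zero piece. The only difference is that you verify explicitly, via the generator $d\log(\Xi_G/\Psi_G^2)$ and its residues, that the map $\mathbb{Q}(-1)\to\mathbb{Q}(-1)^{\oplus 6}$ on the $q=1$ row is injective (and hence also that no $d_2$ can hit $E_2^{2,0}$), a step the paper leaves implicit when it simply takes cohomology of the rows.
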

\begin{proof}
It is enough to prove the claim over any fibre $t \in S(\mathbb{C})$. The $E_1$ page of the relative cohomology spectral sequence reads as follows:
\begin{equation}
\label{eq11}
\begin{tikzcd}[row sep=small, column sep=small]
H^2(\Pp^2 \setminus (Q \cup L)) \arrow{r} & \bigoplus_{|I|=1} H^2(\Delta_I \setminus (Q \cup L) \cap \Delta_I)  \arrow{r} & \bigoplus_{|I|=2} H^2(\Delta_I \setminus (Q \cup L) \cap \Delta_I) \\
H^1(\Pp^2 \setminus (Q \cup L)) \arrow{r} & \bigoplus_{|I|=1} H^1(\Delta_I \setminus (Q \cup L) \cap \Delta_I)  \arrow{r} & \bigoplus_{|I|=2} H^1(\Delta_I \setminus (Q \cup L) \cap \Delta_I) \\
H^0(\Pp^2 \setminus (Q \cup L)) \arrow{r} & \bigoplus_{|I|=1} H^0(\Delta_I \setminus (Q \cup L) \cap \Delta_I)  \arrow{r} & \bigoplus_{|I|=2} H^0(\Delta_I \setminus (Q \cup L) \cap \Delta_I)
\end{tikzcd}
\end{equation}

To compute this, we note that $H^1(\Delta_i \setminus (Q \cup L) \cap \Delta_i) \cong H^1(\Pp^1 \setminus \{u_{1_i},u_{2_i},l_{1_i}\}) \cong \mathbb{Q}(-1)^{\oplus 2}$, where $u_{1_i},u_{2_i},l_{1_i}$ are the points of intersection of the quadric $Q$ and the line $L$ with the face $\Delta_i \hookrightarrow \Delta$. From the previous lemma we get the leftmost column, and note that $E_1^{1,2}=E_1^{2,1}=E_1^{1,2}=E_1^{2,2}=0$. Taking cohomology of the rows we get the result.
\end{proof}
\end{subsubsection}
\begin{subsubsection}{Coaction on the triangle graph with non-vanishing masses and momenta}
\label{triangle-coaction-non-vanishing}
Notice from the proofs of the previous two lemmas that all the motivic periods of the triangle graph motive in the case of non-vanishing masses, except the one of weight 4 and one of weight 0, are equivalent to motivic periods of the motives associated to the faces of $\Delta$ via the face maps, as in (\ref{coaction-4-edge-computation}). Note that $G/e_i$, for $i \in \{1,2,3\}$, are bubble graphs with non-vanishing masses. In this case we consider the full motive of the bubble graph $mot_{G/e_i}$ (\ref{bubble-motive}) for all $i$. The fiber over the generic point of $(gr^W_2mot_{G/e_i})_{\mathrm{dR}}$ is of rank 2 and we can choose its basis to be the classes of the two forms:
\begin{equation}
\label{omega-ij-2}
\begin{split}
\theta^1_{G/e_i} & = \frac{\alpha_kd\alpha_j - \alpha_jd\alpha_k}{\Xi_{G/e_i}} \\
\theta^2_{G/e_i} & = \frac{(x-y)(\alpha_kd\alpha_j - \alpha_jd\alpha_k)}{(\alpha_k-x\alpha_j)(\alpha_k-y\alpha_j)} \\
\end{split}
\end{equation}
where $i,j,k$ are pairwise distinct and $x,y$ are coordinates of points $u_1 \in Q \cap \Delta_I$ and $L|_{\Delta_i}$ respectively in the coordinate chart of $\Delta_i$ where $\alpha_j \not = 0$.

We can choose a basis of the fiber of $(mot_G)_{\mathrm{dR}}$ over the generic point such that it contains the class of the Feynman form of $G$ in 4 dimensions $[\omega_G]$, and the 5 weight 2 classes denoted by $\left[\omega^j_i\right]$, which we define to be the images of $\left[\theta^j_{G/e_i}\right]$ under the de Rham component of the face map (\ref{face-map}):
\begin{equation}
\begin{split}
\Phi_{i,dR} & : (mot_{G/e_i})_{\mathrm{dR}} \rightarrow (mot_G)_{\mathrm{dR}} \\
& \left[\theta^j_{G/e_i}\right] \mapsto \left[\omega^j_i\right]
\end{split}
\end{equation}
Note that there are 6 such classes $\left[\omega^j_i\right]$, two for each face $\Delta_i$, but there is a relation between them, as can be seen in the proof of Lemma \ref{triangle-general}.

\begin{prop}
Let $G$ be a triangle graph with non-vanishing generic masses and momenta. Then the motivic Galois coaction on the associated motivic Feynman amplitude is:
\begin{equation}
\label{coaction-3-gen}
\begin{split}
\Delta I_G^{\mathfrak{m}} & = I_{G/e_1}^{\mathfrak{m}}\left(\theta^1_{G/e_1}\right) \otimes \left[mot_G, \left[\omega^1_1\right]^{\vee},[\omega_G]\right]^{\mathfrak{dr}} + \sum\limits_{i=2,3} \sum_{j=1,2} I_{G/e_i}^{\mathfrak{m}}\left(\theta^j_{G/e_i}\right) \otimes \left[mot_{G}, \left[\omega^j_i\right]^{\vee},[\omega_G]\right]^{\mathfrak{dr}} \\
&+ I_G^{\mathfrak{m}} \otimes (\mathbb{L}^{\mathfrak{dr}})^2 + 1\otimes I_G^{\mathfrak{dr}}
\end{split}
\end{equation}
\end{prop}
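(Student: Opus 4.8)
The plan is to run the same argument as in the proof of the coaction on the four-edge graph (the Proposition yielding \eqref{coaction-4-edge}), with the rank-three restricted bubble motives $mot'_{G/\{e_i,e_j\}}$ replaced by the rank-three \emph{full} bubble motives $mot_{G/e_i}$. Concretely, I would start from the general coaction formula \eqref{general-coaction} applied to $\left[mot_G,[\sigma_G],[\omega_G]\right]^{\mathfrak{m}}$, which requires choosing a basis $\{[\omega_j]\}$ of the fibre of $(mot_G)_{\mathrm{dR}}$ over the generic point and its dual $\{[\omega_j]^\vee\}$. Since $mot_G$ is mixed Tate by Lemma \ref{triangle-general}, the weight filtration on the de Rham realization splits canonically, so I would take $\{[\omega_j]\}$ adapted to the weight grading: the Feynman class $[\omega_G]$, which represents a generator of $\mathrm{gr}^W_4(mot_G)_{\mathrm{dR}}\cong\mathbb{Q}(-2)_{/S}$; a class $e_0$ generating $\mathrm{gr}^W_0(mot_G)_{\mathrm{dR}}\cong\mathbb{Q}(0)_{/S}$; and five classes spanning $\mathrm{gr}^W_2(mot_G)_{\mathrm{dR}}\cong\mathbb{Q}(-1)^{\oplus5}_{/S}$.

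For the weight-$2$ part I would use the fact, visible in the proof of Lemma \ref{triangle-general}, that the sum of the de Rham face maps (Definition \ref{face-map}) $\bigoplus_i\Gamma(\Sp(k_S),(mot_{G/e_i})_{\mathrm{dR}})\to\Gamma(\Sp(k_S),(mot_G)_{\mathrm{dR}})$ surjects onto the weight-$2$ part; hence the six classes $[\omega^j_i]:=\Phi_{i,\mathrm{dR}}([\theta^j_{G/e_i}])$ span $\mathrm{gr}^W_2(mot_G)_{\mathrm{dR}}$, and the single relation among them is the restriction to the faces of a generator of $H^1(\Pp^2\setminus(Q\cup L))$ --- this is the $[Q]=2[L]$ relation from the cohomology computation --- which lets me eliminate $[\omega^2_1]$ and retain $\{[\omega^1_1],[\omega^1_2],[\omega^2_2],[\omega^1_3],[\omega^2_3]\}$ as a basis. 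Then I would identify the three groups of terms. For each weight-$2$ basis vector, the compatibility of $\mathcal{H}(S)$-periods with the face map $\Phi_i$ (the relation $(\mathcal{V}_1,(\omega^X_B(\phi))^t\sigma_2,\omega_1)\sim(\mathcal{V}_2,\sigma_2,\omega^{gen}_{\mathrm{dR}}(\phi)(\omega_1))$), whose Betti component sends $[\sigma_G]$ to $[\sigma_{G/e_i}]$, gives $\left[mot_G,[\sigma_G],[\omega^j_i]\right]^{\mathfrak{m}}=\left[mot_{G/e_i},[\sigma_{G/e_i}],[\theta^j_{G/e_i}]\right]^{\mathfrak{m}}=I^{\mathfrak{m}}_{G/e_i}(\theta^j_{G/e_i})$. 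The weight-$0$ vector contributes $\left[mot_G,[\sigma_G],e_0\right]^{\mathfrak{m}}=1$ and $\left[mot_G,e_0^\vee,[\omega_G]\right]^{\mathfrak{dr}}=I^{\mathfrak{dr}}_G$, the latter being the definition of the de Rham Feynman amplitude (Definition \ref{motivic-Feynman-amplitude}), since $e_0^\vee$ is the projection $\epsilon$ onto $W_0$. The weight-$4$ vector contributes $\left[mot_G,[\sigma_G],[\omega_G]\right]^{\mathfrak{m}}=I^{\mathfrak{m}}_G$ and $\left[mot_G,[\omega_G]^\vee,[\omega_G]\right]^{\mathfrak{dr}}=(\mathbb{L}^{\mathfrak{dr}})^2$, because $\mathrm{gr}^W_4(mot_G)\cong\mathbb{Q}(-2)_{/S}$ whose de Rham period with respect to a generator and its dual is $(\mathbb{L}^{\mathfrak{dr}})^2$. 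Collecting everything yields \eqref{coaction-3-gen}.

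The main obstacle is the bookkeeping of the five-dimensional weight-$2$ piece: one must pin down precisely the single linear relation among the six face-map classes $[\omega^j_i]$ so that dropping $[\omega^2_1]$ is justified, which means unwinding the relative-cohomology spectral sequence \eqref{eq11} together with the computation of $H^\bullet(\Pp^2\setminus(Q\cup L))$ and tracking the image of $H^1(\Pp^2\setminus(Q\cup L))$ under restriction to the faces. A secondary point needing a remark is that $[\omega_G]$ genuinely spans $\mathrm{gr}^W_4(mot_G)_{\mathrm{dR}}$ rather than lying in $W_2$; this follows from the Griffiths-type pole-order analysis of forms on $\Pp^2$ with poles along $Q\cup L$ (as used below in Section \ref{triangle} when checking for poles directly), or a posteriori from the fact that the associated period is a genuine dilogarithm.
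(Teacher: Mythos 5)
Your proposal follows essentially the same route as the paper: the paper's argument is exactly the discussion preceding the proposition, which mirrors the four-edge computation of \eqref{coaction-4-edge} — a weight-adapted basis of $(mot_G)_{\mathrm{dR}}$ over the generic point consisting of $[\omega_G]$, a weight-$0$ class, and five of the six face-map images $[\omega^j_i]$ of the bubble classes $[\theta^j_{G/e_i}]$ (one being eliminated by the single relation coming from $H^1(\Pp^2\setminus(Q\cup L))\cong\mathbb{Q}(-1)$ in the spectral sequence \eqref{eq11}), with the motivic entries identified via the Betti/de Rham components of the face maps and the weight-$0$ and weight-$4$ terms giving $1\otimes I_G^{\mathfrak{dr}}$ and $I_G^{\mathfrak{m}}\otimes(\mathbb{L}^{\mathfrak{dr}})^2$. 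Your extra care about pinning down the relation among the six classes and about $[\omega_G]$ generating $\mathrm{gr}^W_4$ only makes explicit what the paper leaves implicit.
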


\begin{rmk}
Both the motivic and the de Rham side of the coaction in the previous proposition can be expressed in terms of motivic and de Rham logarithms respectively, using the same techniques as in the previous section.
\end{rmk}

\end{subsubsection}
\end{subsection}

\begin{subsection}{Triangle graph with vanishing masses}
\begin{subsubsection}{Motive}
When one of the masses $m_i$ vanishes, i.e., when we have $F=3$ and $M=2$, the quadric $Q$ passes through a point of $\Delta$ defined by the vanishing of the coordinates corresponding to the other two edges $V(\alpha_j) \cap V(\alpha_k)$, where $i,j,k$ are pairwise distinct. In the following figure, we have chosen $m_1=0$:

\begin{equation}
\begin{tikzpicture}
\draw (-3,4) node {$Q$};
\draw[line width=1pt] (0,2) circle (3cm);
\draw (-3,-2) node[below] {$V(\alpha_2)$}  -- (0,6);
\draw (-5,1) node[below] {$V(\alpha_1)$} -- (5,1);
\draw (3,-2) node[below] {$V(\alpha_3)$} -- (-0.9,6);
\draw (5,6) node[above] {$L$} -- (2,-2);
\end{tikzpicture}
\end{equation}

This means that the poles of the integrand meet the boundary of the domain of integration, and $\Delta \cup Q \cup L$ is not simple normal crossing anymore, over each fiber of $K^{gen}_{F,M}$. Thus we cannot realize the relevant Feynman integral of the graph as a family of motivic periods of the motive $H^2(\Pp^2 \setminus (Q \cup L), \Delta \setminus (Q \cup L) \cap \Delta)_{/S}$. To remedy the situation we must blow up along $V(\alpha_j) \cap V(\alpha_k) \subset \Pp^2$. The point $V(\alpha_j) \cap V(\alpha_k)$ corresponds to a motic subgraph $\gamma$ (see definition \ref{motic-subgraphs}) spanned by the edges $\{e_j,e_k\}$ of our triangle graph $G$.

Let $\pi_G : P^G \rightarrow \Pp^2$ for the blow-up of $\Pp^2$ at the point $V(\alpha_3) \cap V(\alpha_2) = [1:0:0]$. Denote by $\widetilde{Q},\widetilde{L}$ the strict transforms of $Q$ and $L$ where these are given by the vanishing of:
\begin{equation}
\begin{split}
&\Psi_G = \alpha_1 + \alpha_2 + \alpha_3, \text{ and} \\
&\Xi_G(m,q) =  q_1^2\alpha_2\alpha_3 + q_2^2\alpha_1\alpha_3+q_3^2\alpha_1\alpha_2 + (m_2^2\alpha_2+m_3^2\alpha_3)\Psi_G .
\end{split}
\end{equation}
Let $D = \pi_G^{-1}(\Delta)$ be the total transform of $\Delta$, and denote by $D_{-1}$ the exceptional divisor corresponding to the motic subgraph $\gamma$ (see (\ref{fig1})). By the results recalled in \ref{motivic-lifts} we get
\begin{equation}
\begin{split}
&[\widetilde{\sigma_G}] \in \Gamma(U^{gen}_{2,3}, (mot_G)_B^{\vee}) \text{, where} \\
&mot_G = (H^2_B(P^G \setminus \widetilde{Q} \cup \widetilde{L}, D \setminus (\widetilde{Q} \cup \widetilde{L}) \cap D)_{/S},H^2_{\mathrm{dR}}(P^G \setminus \widetilde{Q} \cup \widetilde{L}, D \setminus (\widetilde{Q} \cup \widetilde{L}) \cap D)_{/S},c).
\end{split}
\end{equation}

\begin{lemma}
\label{pullback-lemma}
$\pi_G^*(\omega_G(m,q))$ does not have any poles along the exceptional divisor $D_{-1}$. Hence $\pi_G^*(\omega_G(m,q))$ is a global section of $\Omega^2_{P^G \setminus \widetilde{Q} \cup \widetilde{L}/k_S}$, and defines a class $[\pi_G^*(\omega_G(m,q))] \in (mot_G)_{\mathrm{dR}}$.
\end{lemma}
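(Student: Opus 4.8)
The plan is to check the absence of poles along $D_{-1}$ directly in affine charts of the blow-up, in the spirit of \S\ref{motivic-lifts} — we need only this single instance, not Brown's general degree-of-divergence criterion \cite[\S 6.6]{Brown2}. First I would localize to the relevant chart: the blown-up point $[1:0:0]$ lies only in $\{\alpha_1\neq 0\}$, so setting $\alpha_1=1$ and writing $(x,y)=(\alpha_2,\alpha_3)$ one has $\Omega_G|_{\alpha_1=1}=\pm\,dx\wedge dy$, and hence
\[
\omega_G=\frac{\Psi_G^{-1}}{\Xi_G}\,\Omega_G=\frac{\pm\,dx\wedge dy}{\Psi_G(1,x,y)\,\Xi_G(1,x,y)} .
\]
The blow-up $\pi_G$ of the origin is covered there by the two standard charts $(x,y)=(t,ts)$ and $(x,y)=(ts,t)$, with $D_{-1}=\{t=0\}$ and $\pi_G^*(dx\wedge dy)=\pm\,t\,dt\wedge ds$ in each.

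The heart of the argument is a matching of orders of vanishing at $p=[1:0:0]$. On the one hand $\Psi_G(1,0,0)=1$, so $V(\Psi_G)=L$ avoids $p$ and $\Psi_G\circ\pi_G$ is a unit along $D_{-1}$. On the other hand the linear part of $\Xi_G(1,x,y)$ at the origin is $(q_3^2+m_2^2)x+(q_2^2+m_3^2)y$, which is a nonzero element of $k_S$ (and stays nonzero after specializing to any point of $K^{gen}_{2,3}$, by the inequalities of Definition \ref{space-of-kinematics-defi}); hence $\Xi_G$ vanishes to order exactly $1$ at $p$, so $\Xi_G\circ\pi_G=t\cdot g(t,s)$ with $g|_{t=0}\not\equiv 0$ in each chart. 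Substituting,
\[
\pi_G^*\omega_G=\frac{\pm\,t\,dt\wedge ds}{(\Psi_G\circ\pi_G)\cdot t\,g(t,s)}=\frac{\pm\,dt\wedge ds}{(\Psi_G\circ\pi_G)\,g(t,s)} ,
\]
so the simple zero of the Jacobian along $D_{-1}$ cancels the simple pole coming from $\Xi_G\circ\pi_G$, and no pole along $D_{-1}$ remains. To close, I would remark that $\pi_G$ is an isomorphism away from the exceptional divisor and the only poles of $\omega_G$ on $\Pp^2$ lie along $L=V(\Psi_G)$ and $Q=V(\Xi_G)$, whose strict transforms $\widetilde L,\widetilde Q$ have been removed in $P^G\setminus(\widetilde Q\cup\widetilde L)$ — in the charts above $\Psi_G\circ\pi_G$ cuts out $\widetilde L$ and $g$ (resp.\ its analogue) cuts out $\widetilde Q$. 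Therefore $\pi_G^*(\omega_G(m,q))\in\Gamma\big(\Sp(k_S),\Omega^2_{P^G\setminus\widetilde Q\cup\widetilde L/k_S}\big)$ and defines a class in $(mot_G)_{\mathrm{dR}}$.

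An equivalent, coordinate-free reformulation — which I might record as a remark — runs through divisor classes: from the canonical class relation $K_{P^G}=\pi_G^*K_{\Pp^2}+D_{-1}$, together with $\pi_G^*Q=\widetilde Q+(\mathrm{mult}_pQ)\,D_{-1}$ and $\pi_G^*L=\widetilde L$ (since $p\notin L$), one obtains $\pi_G^*\big(\Omega^2_{\Pp^2}(Q+L)\big)\cong\Omega^2_{P^G}(\widetilde Q+\widetilde L)$ precisely when $\mathrm{mult}_pQ=1$. The only step with any real content — and the one place where the mass-zero degeneration could in principle spoil the conclusion — is exactly this: one must ensure $\Xi_G$ does not vanish at $p$ to order $\geq 2$, equivalently that $p$ is not a singular point of the (possibly degenerate) conic $Q$; this is what working over $\Sp(k_S)$, or over $K^{gen}_{2,3}$ for specializations, secures. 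The rest of the computation is routine.
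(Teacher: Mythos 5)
Your proposal is correct and follows essentially the same route as the paper's own proof: a direct computation in affine charts of the blow-up, where the simple zero of the pulled-back form $\pi_G^*(\Omega_G)$ along $D_{-1}$ cancels the simple zero of $\Xi_G\circ\pi_G$ (the quadric passing through the blown-up point with multiplicity one), while $\Psi_G$ is non-vanishing there. The only differences are cosmetic: you organize the cancellation around $\mathrm{mult}_p Q=1$ (checking the linear part of $\Xi_G$ and that $L$ misses $p$) and add an optional divisor-class reformulation, whereas the paper writes out the explicit pullbacks of $\Psi_G$ and $\Xi_G$ in its charts.
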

\begin{proof}
We can consider the following affine charts of $P^G$: let $\mathbb{A}_{23,1}$ be the affine space with the coordinate ring $\mathcal{O}(\mathbb{A}_{23,1}) = \mathbb{Z}[\beta^{23,1}_1,\beta^{23,1}_2]$, where $\beta^{23,1}_1 = \frac{\alpha_1}{\alpha_2}$, and $\beta^{23,1}_2 = \alpha_2$, and $\alpha_3=1$. The exceptional divisor, in this affine chart, is given by $\beta^{23,1}_2=0$. One analogously defines $\mathbb{A}_{23,2}$, by $\beta^{23,2}_1 = \alpha_1$, and $\beta^{23,2}_2 = \frac{\alpha_2}{\alpha_1}$, where $\alpha_3=1$. Two more affine spaces, given by $\alpha_1=1$ and $\alpha_2=1$, away from the exceptional divisor, complete a covering of $P^G$.

Let us see what $\pi_G^*(\omega_G(m,q))$ looks like in $\mathbb{A}_{23,1}$. The differential form $\Omega_G$ is pulled back to $\pi_G^*(\Omega_G) = d(\beta^{23,1}_1\beta^{23,1}_2)d\beta^{23,1}_2 = \beta^{23,1}_2d\beta^{23,1}_1\beta^{23,1}_2$. We also have
\begin{equation}
\begin{split}  
&\Psi_{23}:=\pi_G^*(\Psi_G) = \beta^{23,1}_2d\beta^{23,1}_1 + \beta^{23,1}_2 + 1 \\
&\Xi_{23}:=\pi_G^*(\Xi_G) = \beta^{23,1}_2(q^2_1 + q_2^2\beta^{23,1}_1 + q^2_3\beta^{23,1}_1\beta^{23,1}_2 + m_2^2\beta^{23,1}_2\Psi_{23}) \\
\end{split}
\end{equation}
and therefore
\[
\pi_G^*(\omega_G(m,q))=\left(\frac{\beta^{23,1}_2d\beta^{23,1}_1\beta^{23,1}_2}{\Psi_{23}\Xi_{23}}\right)
\]
has no poles along $\beta^{23,1}_2$ since it cancels out. The result follows.

\end{proof} 

\begin{equation}
\label{fig1}
\begin{tikzpicture}
\draw (-3,5) node {$\widetilde{Q}$};
\draw[line width=1pt] (0,2) circle (4cm);
\draw (-3,-2) node[below] {$D_2$}  -- (-1,5.5);
\draw (-5,1) node[below] {$D_1$} -- (5,1);
\draw (-3,4.5) --  (5,4.5);
\draw (-5,4.5) node[below] {$D_{-1}$} -- (-3.3,4.5);
\draw (3,-2) node[below] {$D_3$} -- (1,5.5);
\draw (5,3.5) node[above] {$\widetilde{L}$} -- (1,-1.5);
\end{tikzpicture}
\end{equation}
When either of the other two masses, $m_2$ or $m_3$, vanishes, we have to blow up points $V(\alpha_1)\cap V(\alpha_3)$, and $V(\alpha_1)\cap V(\alpha_2)$ respectively. Denote the corresponding exceptional divisors $D_{-2},D_{-3}$. The proof of the previous lemma proceeds analogously, for each of the exceptional divisors. Having done this we get the associated motivic Feynman amplitude:
\[
\left[mot_G, \left[\sigma_G\right], \left[\pi^*(\omega_G)\right]\right]^{\mathfrak{m}} \in \mathcal{P}^{\mathfrak{m},U^{gen}_{k,3}, gen}_{\mathcal{H}(S)},
\]
where $1\leq k < 3$ is the number of non-vanishing masses.

\begin{lemma}
\label{triangle-vanishing}
Let $G$ be a 1-loop graph with 3 internal edges. Let $ 1 \leq v \leq 3$ be the number of vanishing masses. Then 
\[
\text{gr}^Wmot_G \cong \mathbb{Q}(-2)_{/S} \oplus \mathbb{Q}(-1)_{/S}^{\oplus 5 - v} \oplus \mathbb{Q}(0)_{/S}
\]
\end{lemma}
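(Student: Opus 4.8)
The plan is to reduce the assertion to a single fibre over a point $t\in S(\Cc)$, as in the proof of Lemma~\ref{triangle-general}. Over $t$, $Q$ is a smooth conic passing through precisely the $v$ coordinate points $p_i=V(\alpha_j)\cap V(\alpha_k)$ attached to the vanishing masses $m_i$, $L$ is a line through none of the three coordinate points, $Q,L,\Delta$ are in general position, $P^G$ is the blow-up of $\Pp^2$ at those $v$ points with exceptional divisors $D_{-i}$, and $D$ is the total transform of $\Delta$, whose irreducible components are the strict transforms $D_1,D_2,D_3$ of the coordinate lines together with the $D_{-i}$. Writing $W:=P^G\setminus(\widetilde Q\cup\widetilde L)$, I would compute $H^\ast(W)$ and $H^\ast(D_c\cap W)$ for all components $D_c$ and feed them into the relative cohomology spectral sequence $E_1^{p,q}=\bigoplus_{|J|=p}H^q(D_J\cap W)\Rightarrow H^{p+q}(W,D\cap W)$ used throughout the paper, the fibre of $mot_G$ over $t$ being $H^2(W,D\cap W)$.

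The first step is to compute $H^\ast(W)$. Since the blown-up points lie on $Q$ but not on $L$, the strict transform $\widetilde L$ misses every $D_{-i}$ while $\widetilde Q$ meets each $D_{-i}$ transversally in one point; hence $W$ is the disjoint union of $U:=\Pp^2\setminus(Q\cup L)$ (deleting the $p_i\in Q$ changes nothing) and the smooth divisor $Z:=\bigsqcup_i(D_{-i}\setminus\widetilde Q)\cong\bigsqcup_i\mathbb{A}^1$. Feeding $H^0(U)=\Q(0)$, $H^1(U)\cong\Q(-1)$, $H^2(U)\cong\Q(-2)$, $H^{\geq 3}(U)=0$ (the lemma immediately preceding Lemma~\ref{triangle-general}, computing $H^\ast(\Pp^2\setminus(Q\cup L))$) into the Gysin long exact sequence
\[
\cdots\to H^{k-2}(Z)(-1)\to H^k(W)\to H^k(U)\to H^{k-1}(Z)(-1)\to\cdots
\]
reduces everything to the connecting map $\partial\colon H^1(U)\to H^0(Z)(-1)=\Q(-1)^{\oplus v}$. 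The crux — and the step I expect to be the main obstacle — is that $\partial$ is injective, and this is exactly where the vanishing of masses, hence the passage of $Q$ through the points being blown up, intervenes: the proof of the lemma just quoted exhibits the generator of $H^1(\Pp^2\setminus(Q\cup L))$ as a class with nonzero residue along $Q$, and blowing up a smooth point of $Q$ converts a simple pole along $Q$ into a simple pole along the corresponding exceptional divisor with the same residue, so $\partial$ sends the generator to a vector all of whose coordinates are nonzero. Granting this, $H^1(W)=0$, the degree-two segment reads $0\to\Q(-1)^{\oplus(v-1)}\to H^2(W)\to\Q(-2)\to0$, and $H^{\geq3}(W)=0$; in particular $\text{gr}^W_2 H^2(W)\cong\Q(-1)^{\oplus(v-1)}$ and $\text{gr}^W_4 H^2(W)\cong\Q(-2)$.

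The second step is to read off the faces. Each $D_{-i}\cong\Pp^1$ meets $\widetilde Q\cup\widetilde L$ in one point, so $D_{-i}\cap W\cong\mathbb{A}^1$ contributes only $\Q(0)$ in degree $0$. Each strict transform $D_i\cong\Pp^1$ of $\Delta_i$ meets $\widetilde L$ in one point and meets $\widetilde Q$ in the $2-c_i$ points of $Q\cap\Delta_i$ that were not blown up, where $c_i:=\#\{j\ne i:\ m_j=0\}$; thus $H^1(D_i\cap W)\cong\Q(-1)^{\oplus(2-c_i)}$, and since $\sum_i c_i=2v$ we obtain $E_1^{1,1}=\bigoplus_cH^1(D_c\cap W)\cong\Q(-1)^{\oplus(6-2v)}$, with all higher $H^q(D_c\cap W)$ vanishing. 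Every nonempty double intersection $D_J$ ($|J|=2$) is a single point off $\widetilde Q\cup\widetilde L$, contributing only to $E_1^{2,0}$, and there are no nonempty triple intersections; moreover the dual complex of $D$ is the triangle with $v$ of its edges subdivided, hence homotopy equivalent to a circle, so the bottom row $E_1^{\bullet,0}$ gives $E_2^{2,0}\cong\Q(0)$ and $E_2^{0,0}=E_2^{1,0}=0$.

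The last step is to run the spectral sequence. Because $E_1^{p,q}=0$ for $p\geq3$ or $q\geq3$, and $E_1^{1,2}=E_1^{2,1}=0$, the entry $E_1^{0,2}=H^2(W)$ survives to $E_\infty^{0,2}$; because $E_1^{0,1}=H^1(W)=0$, we get $E_\infty^{1,1}=E_1^{1,1}\cong\Q(-1)^{\oplus(6-2v)}$; and $E_2^{0,1}=0$, so $E_\infty^{2,0}\cong\Q(0)$. Collecting the three surviving contributions on the antidiagonal $p+q=2$ gives $\text{gr}^W_0 mot_G=\Q(0)$, $\text{gr}^W_2 mot_G\cong\Q(-1)^{\oplus(v-1)+(6-2v)}=\Q(-1)^{\oplus(5-v)}$, $\text{gr}^W_4 mot_G=\Q(-2)$, and nothing else. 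Since the only quadric occurring is the conic $Q$, whose cohomology is one-dimensional in every degree, no monodromy can permute summands, so the identification holds as variations over $S$, yielding $\text{gr}^W mot_G\cong\Q(-2)_{/S}\oplus\Q(-1)^{\oplus(5-v)}_{/S}\oplus\Q(0)_{/S}$.
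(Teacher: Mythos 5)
Your proposal is correct and reproduces the paper's numbers in all three cases, but it reaches the key intermediate computation by a different route. The paper works case by case ($v=1,2,3$) and computes $H^{\bullet}(P^G\setminus(\widetilde{Q}\cup\widetilde{L}))$ directly from the Gysin spectral sequence \eqref{helful-spec-seq} on the blown-up surface, using intersection theory in $\mathrm{Pic}(P^G)$: the class $[\widetilde{Q}]=2[\widetilde{L}]-\sum_i[D_{-i}]$ together with $[\widetilde{L}]$ determines the rank of the map $E_1^{-1,2}\to E_1^{0,2}$, hence $\mathrm{gr}^W_2H^2\cong\mathbb{Q}(-1)^{\oplus(v-1)}$ and $H^1=0$. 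You instead excise the (open parts of the) exceptional divisors and use the localization/Gysin long exact sequence relating $W$ to $U=\mathbb{P}^2\setminus(Q\cup L)$, which was already computed in the preceding lemma; the whole case analysis then collapses to the single claim that the connecting map $H^1(U)\to H^0(Z)(-1)$ is injective, which you justify by the nonvanishing residue of the generator of $H^1(U)$ (essentially $d\log(\Psi_G^2/\Xi_G)$) along $Q$ and the fact that pulling back a logarithmic form under a blow-up at a smooth point of $Q$ produces the same residue along the exceptional divisor. This is equivalent to the paper's intersection-number argument (both encode $[\widetilde Q]\cdot[D_{-i}]=1$), but it treats all $v$ uniformly and makes transparent where the vanishing of masses enters; the face bookkeeping via $c_i$ and the subdivided-triangle dual complex likewise packages the paper's three separate face counts into one formula, and the remainder (degeneration of the relative cohomology spectral sequence) is identical to the paper. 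The only soft spot is your closing sentence: the potential monodromy on $\mathrm{gr}^W_2$ has nothing to do with the rank of the conic's cohomology but with loops permuting the two points of $Q\cap\Delta_i$ (square roots of discriminants), so constancy over $S$ really relies on the base extension fixed after Proposition \ref{weight-graded-pieces}; the paper's own proof is no more careful on this point, as it argues fiberwise, but you should either invoke that base change or state the conclusion fiberwise rather than appeal to the conic.
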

\begin{proof}
Let us see how one proceeds in the case of one vanishing mass. Without loss of generality let that mass be $m_1 = 0$. 

We apply the relative cohomology spectral sequence at each fiber $t \in S(\mathbb{C})$:
\[
H^2(P^G \setminus (\widetilde{Q} \cup \widetilde{L}), D \setminus (\widetilde{Q} \cup \widetilde{L}) \cap D)
\]
Its first page reads:
\begin{equation}
\label{eq12}
\begin{tikzcd}[column sep = small, row sep=small]
H^2(P^G \setminus (\widetilde{Q} \cup \widetilde{L})) \arrow{r} & \bigoplus\limits_{i \in \{-1,1,2,3\}} H^2(D_i \setminus (\widetilde{Q} \cup \widetilde{L}) \cap D_i)  \arrow{r} & \bigoplus\limits_{i \in \{-1,1,2,3\}} H^2(D_{ij} \setminus (\widetilde{Q} \cup \widetilde{L}) \cap D_{ij})\\
H^1(P^G \setminus (\widetilde{Q} \cup \widetilde{L})) \arrow{r} & \bigoplus\limits_{i \in \{-1,1,2,3\}} H^1(D_i \setminus (\widetilde{Q} \cup \widetilde{L}) \cap D_i) \arrow{r} & \oplus_{i \in \{-1,1,2,3\}} H^1(D_{ij} \setminus (\widetilde{Q} \cup \widetilde{L}) \cap D_{ij})\\
H^0(P^G \setminus (\widetilde{Q} \cup \widetilde{L})) \arrow{r} & \bigoplus\limits_{i \in \{-1,1,2,3\}} H^0(D_i \setminus (\widetilde{Q} \cup \widetilde{L}) \cap D_i) \arrow{r} & \bigoplus\limits_{i \in \{-1,1,2,3\}} H^0(D_{ij} \setminus (\widetilde{Q} \cup \widetilde{L}) \cap D_{ij})
\end{tikzcd}
\end{equation}
We use the spectral sequence (\ref{helful-spec-seq}) to compute $H^2(P^G \setminus (\widetilde{Q} \cup \widetilde{L}))$ and $H^1(P^G \setminus (\widetilde{Q} \cup \widetilde{L}))$.

\begin{equation}
\label{vanishing-masses-gysin}
\begin{tikzcd}[row sep=small, column sep=tiny]
\ldots \arrow{r} & H^2(\widetilde{Q})(-1) \bigoplus H^2(\widetilde{L})(-1) \arrow{r} & H^4(P^G) \\
\ldots \arrow{r} & 0  \arrow{r} & 0 \\
\ldots \arrow{r} & H^0(\widetilde{Q})(-1) \bigoplus H^0(\widetilde{L})(-1) \arrow {r} & H^2(P^G) \\
\ldots \arrow{r} & 0  \arrow{r} & 0 \\
\ldots \arrow{r} & 0 \arrow {r} & H^0(P^G)
\end{tikzcd}
\end{equation}

We have that $H^2(P^G)$ is generated by $[\widetilde{L}], [D_{-1}]$, and the image of the Gysin morphism $H^0(\widetilde{Q})(-1) \rightarrow H^2(P^G)$ is $[Q] = 2[\widetilde{L}] - [D_{-1}]$ because the intersection of $Q \cdot D_{-1} = -1$ and $Q \cdot L = 2$. The image of $H^0(\widetilde{L})(-1) \rightarrow H^2(P^G)$ is simply $[\widetilde{L}]$, so the difference of these two morphisms, which is the map, $E^1_{-1,2} \rightarrow E^1_{0,2}$, is an isomprhism. Therefore $gr^W_2 H^1(P^G \setminus (\widetilde{Q} \cup \widetilde{L})) = 0 $ and $gr^W_2 H^2(P^G \setminus (\widetilde{Q} \cup \widetilde{L})) = 0$. In the top row nothing changes by blowing up, hence the computation is the same as in the previous lemma. We get $H^2(P^G \setminus (\widetilde{Q} \cup \widetilde{L})) \cong \mathbb{Q}(-2)$ and $H^1(P^G \setminus (\widetilde{Q} \cup \widetilde{L})) \cong 0$.

Next, we compute $\bigoplus\limits_{i \in \{-1,1,2,3\}} H^1(D_i \setminus (\widetilde{Q} \cup \widetilde{L}) \cap D_i)$. For the exceptional divisor we have $D_{-1} \cap (\widetilde{Q} \cup \widetilde{L}) = \{1\ \text{ point} \}$, therefore $H^1(D_{-1} \setminus (\widetilde{Q} \cup \widetilde{L}) \cap D_{-1}) \cong H^1(\mathbb{A}^1) = 0$. For $D_1 = \widetilde{V(\alpha_1)}$, since it is away from the exceptional divisor,  we have that $D_{1} \cap (\widetilde{Q} \cup \widetilde{L}) \cong D_{1} \cap (Q \cup L) = \{3 \text{ points}\}$, therefore $H^1(D_1 \setminus (\widetilde{Q} \cup \widetilde{L}) \cap D_1) \cong \mathbb{Q}(-1)^{\oplus 2}$. For the remaining two irreducible components $D_3$ and $D_2$ we have $D_{3/2} \cap (\widetilde{Q} \cup \widetilde{L}) = \{2 \text{ points} \}$, therefore $H^1(D_{3/2} \setminus (\widetilde{Q} \cup \widetilde{L}) \cap D_{3/2}) \cong \mathbb{Q}(-1)$. Putting this together into (\ref{eq12}) we get
\[
\text{gr}^WH^2(P^G \setminus (\widetilde{Q} \cup \widetilde{L}), D \setminus (\widetilde{Q} \cup \widetilde{L}) \cap D) \cong \mathbb{Q}(-2) \bigoplus \mathbb{Q}(-1)^{\bigoplus 4} \bigoplus \mathbb{Q}(0).
\]

When another mass vanishes, say $m_2$, in the spectral sequence \eqref{vanishing-masses-gysin} we get $H^2(P^G) \cong \mathbb{Q}(-1)^{\bigoplus 3}$, generated by $[\widetilde{L}],[D_{-1}],[D_{-2}]$. The image of the Gysin morphism $H^0(\widetilde{Q})(-1) \rightarrow H^2(P^G)$ is generated by $2[\widetilde{L}] - [D_{-1}] - [D_{-2}]$. Therefore the rank of the cokernel of the morpshim $E^1_{-1,2} \rightarrow E^1_{0,2}$ is of rank 1, and the kernel remains trivial. Hence we have $gr^W_2H^2(P^G \setminus (\widetilde{Q} \cup \widetilde{L})) \cong \mathbb{Q}(-1)$. Everything else remains the same as in the previous case, therefore we have $H^2(P^G \setminus (\widetilde{Q} \cup \widetilde{L})) \cong \mathbb{Q}(-2) \oplus \mathbb{Q}(-1)$, and $H^1(P^G \setminus (\widetilde{Q} \cup \widetilde{L})) \cong 0$.

We also have
\[
\bigoplus\limits_{i \in \{-2,-1,1,2,3\}} H^1(D_i \setminus (\widetilde{Q} \cup \widetilde{L}) \cap D_i) \cong \mathbb{Q}(-1)^{\oplus 2}
\]
where one $\mathbb{Q}(-1)$ comes from $D_3$ and $D_1$ each.
Plugging these results into the spectral sequence \eqref{eq12} we get:
\[
\text{gr}^WH^2(P^G \setminus (\widetilde{Q} \cup \widetilde{L}), D \setminus (\widetilde{Q} \cup \widetilde{L}) \cap D) \cong \mathbb{Q}(-2) \bigoplus \mathbb{Q}(-1)^{\bigoplus 3} \bigoplus \mathbb{Q}(0).
\]
Finally, if all three masses vanish we calculate, analogously to the previous two cases, $H^2(P^G \setminus (\widetilde{Q} \cup \widetilde{L})) \cong \mathbb{Q}(-2) \oplus \mathbb{Q}(-1)^{\oplus 2}$, and $H^1(P^G \setminus (\widetilde{Q} \cup \widetilde{L})) \cong 0$, where $P^G$ is the blow up of $\Pp^2$ at 3 points. This case differs from the previous ones only in $gr^W_2 H^2(P^G \setminus (\widetilde{Q} \cup \widetilde{L}))$, which is given by the cokernel of the morphism $E^1_{-1,2} \rightarrow E^1_{0,2}$ in \eqref{vanishing-masses-gysin}. We have that $E^1_{0,2} \cong H^2(P^G)$ generated by $[\widetilde{L}],[D_{-1}],[D_{-2}],[D_{-3}]$, and the image of the Gysin morphism $H^0(\widetilde{Q})(-1) \rightarrow H^2(P^G)$ is generated by $2[\widetilde{L}] - [D_{-1}] - [D_{-2}] - [D_{-3}]$. Hence the cokernel is of rank 2. Moreover, we have: $\bigoplus\limits_{i \in \{-3, -2,-1,1,2,3\}} H^1(D_i \setminus (\widetilde{Q} \cup \widetilde{L}) \cap D_i) \cong 0$ because we remove 1 point from each of the three exceptional divisors, its intersection with $\tilde{Q}$, and one from each of the $D_i$, their intersection with $\tilde{L}$, which makes $D$ into a hexagon of affine lines. Therefore, from \eqref{eq12} we get:
\[
\text{gr}^WH^2(P^G \setminus (\widetilde{Q} \cup \widetilde{L}), D \setminus (\widetilde{Q} \cup \widetilde{L}) \cap D) \cong \mathbb{Q}(-2) \bigoplus \mathbb{Q}(-1)^{\bigoplus 2} \bigoplus \mathbb{Q}(0).
\]
\end{proof}

\end{subsubsection}
\begin{subsubsection}{Coaction on the triangle graph with two or more vanishing masses}

When one of the masses in the triangle graph vanishes, and other masses and momenta are generic, the proof of lemma \ref{triangle-vanishing} and the same argument as in \ref{triangle-coaction-non-vanishing} implies that the coaction in this case becomes:
\begin{equation}
\label{coaction-3-1mass0}
\begin{split}
\Delta I_G^{\mathfrak{m}} & = \left[mot^{\prime \prime}_{G/e_2}, \left[\sigma_{G/e_2}\right],\left[\theta^2_{G/e_1}\right]\right]^{\mathfrak{m}} \otimes \left[mot_G, \left[\omega^2_2\right]^{\vee},[\omega_G]\right]^{\mathfrak{dr}} + \\
& + \left[mot^{ \prime\prime}_{G/e_3}, \left[\sigma_{G/e_3}\right], \left[\theta^2_{G/e_3}\right]\right]^{\mathfrak{m}} \otimes \left[mot_G,\left[\omega^2_3\right]^{\vee} ,[\omega_G]\right]^{\mathfrak{dr}} + \\
&+ \sum_{j=1,2} I_{G/e_1}^{\mathfrak{m}}\left(\theta^j_{G/e_1}\right) \otimes \left[mot_G,\left[\omega^j_1\right]^{\vee},[\omega_G]\right]^{\mathfrak{dr}} + \\
& + I_G^{\mathfrak{m}} \otimes (\mathbb{L}^{\mathfrak{dr}})^2 + 1\otimes I_G^{\mathfrak{dr}}
\end{split}
\end{equation}
where we have taken, without loss of generality, $m_1=0$, and $mot^{\prime \prime}_{G/e_2}$ and $mot^{\prime \prime}_{G/e_3}$ are motives of bubble graphs with one vanishing mass (\ref{bubble-motive-1mass0}), while $mot_{G/e_1}$ is the motive of a bubble graph with non-vanishing masses (\ref{bubble-motive}).

In the proof of Lemma \ref{triangle-vanishing} we saw that when 2 masses of the triangle graph vanish we have 
\[
gr^WH^2(P^G \setminus (\tilde{Q} \cup \tilde{L})) \cong \mathbb{Q}(-2) \oplus \mathbb{Q}(-1).
\] 
Similarly when all 3 masses vanish we get 
\[
gr^WH^2(P^G \setminus (\tilde{Q} \cup \tilde{L})) \cong \mathbb{Q}(-2) \oplus \mathbb{Q}(-1)^{\oplus 2}.
\]
Note that for the motive of the triangle graph with generic non-vanishing masses and momenta all motivic periods of weight 2 are equivalent to motivic periods of the motives $H^1(\Delta_i \setminus (Q \cup L) \cap \Delta_i, \Delta_i \cap (\Delta_j \bigcup \Delta_K))$, where $1\leq i \leq 3$, and $i,j,k$ pairwise distinct, via the face maps. This enables us to write the motivic side of the coaction in terms of the motivic periods of quotient graphs. However, when two or more masses vanish, this is not the case any longer.

We can still compute the coaction, as we now show in the case of all three masses vanishing and three non-trivial external momenta, by using the residue. Consider the following hyperplanes:
\[
F_1 := V(\alpha_2 + \alpha_3), \quad F_2 := V(\alpha_1 + \alpha_3)
\]
and denote by $F_i$ their strict transforms as well. Note that we still consider all schemes base changed to $K_{F,M}$. Let $D_{-i}$ be the exceptional divisor over the point $V(\alpha_j)\cap V(\alpha_k)$ for $1\leq i,j,k \leq 3$ pairwise distinct. Denote the following intersection points:
\[
D_{i} \cap \tilde{L} = l_i, \quad 
D_{-i} \cap \tilde{Q} = u_i, \quad
F_i \cap D_{-i} = g_i, \quad 
F_i \cap D_i = t_i, \quad \text{ for } i=1,2
\]
and
\[
D_{-i} \cap D_j = p_0^i, \quad D_{-i} \cap D_k = p_1^i, \quad i=1,2, \quad i,j,k \text{ pairwise distinct}
\]
as well as
\[
\tilde{Q} \cap \tilde{L} = \{f_0,f_1\}, \quad D_i \cap \tilde{L} = d_i \quad i=1,2 .
\]

\begin{thm}
Let $G$ be the triangle graph with all internal masses vanishing, and non-trivial momenta. 
Then the coaction on the associated motivic Feynman amplitude is:
\begin{equation}
\label{coaction-3-3mass0}
\begin{split}
\Delta I_G^{\mathfrak{m}} & = \left(a_1 \log^{\mathfrak{m}}([g_1u_1|p^1_0p^1_1]) + a_2 \log^{\mathfrak{m}}([g_2u_2|p^2_0p^2_1]) \right) \otimes \left(\log^{\mathfrak{dr}}(f_0f_1|d_1d_2) + \log^{\mathfrak{dr}}(f_0f_1|d_1d_3)\right) +  \\
& + I_G^{\mathfrak{m}} \otimes (\mathbb{L}^{\mathfrak{dr}})^2 + 1\otimes I_G^{\mathfrak{dr}}
\end{split}
\end{equation}
where $a_1,a_2$ are undetermined constants in $k_S$.
The cross-ratios evaluate to:
\[
\log^{\mathfrak{m}}([e_iu_i|p^i_0p^i_1]) = \log^{\mathfrak{m}}\left(\frac{q_j^2}{q_k^2}\right), \quad i=1,2, \quad i,j,k \text{ pairwise distinct}
\]
and
\[
\log^{\mathfrak{dr}}([f_0f_1|d_1d_2]) = \log^{\mathfrak{dr}}\left(\frac{\left(q_1^2 + q_2^2-q_3^2 + \sqrt{q_1^4+q_2^4+q_3^4-2q_1^2q_3^2-2q_2^2q_3^2}\right)^2}{4q_1^2q_2^2}\right)
\]
and 
\[
\log^{\mathfrak{dr}}([f_0f_1|d_1d_3]) = \log^{\mathfrak{dr}}([f_0f_1|d_1d_2]) + \log^{\mathfrak{dr}}\left(\frac{q_1^2 + q_3^2-q_2^2 - \sqrt{q_1^4+q_2^4+q_3^4-2q_1^2q_3^2-2q_2^2q_3^2}}{q_1^2 + q_3^2-q_2^2 + \sqrt{q_1^4+q_2^4+q_3^4-2q_1^2q_3^2-2q_2^2q_3^2}}\right)
\]
\end{thm}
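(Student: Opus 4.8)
The argument has the same three stages as the proof of Theorem~1: pin down the semisimplification of the motive, compute the motivic side of the coaction, then the de Rham side, and finally make the cross-ratios explicit. By Lemma~\ref{triangle-vanishing} with $v=3$ one has $\mathrm{gr}^W mot_G\cong\Q(-2)_{/S}\oplus\Q(-1)^{\oplus 2}_{/S}\oplus\Q(0)_{/S}$, and the proof of that lemma shows that the weight-$2$ part of $(mot_G)_{\mathrm{dR}}$ is the image of the rank-two bundle $\mathrm{gr}^W_2 H^2(P^G\setminus(\widetilde Q\cup\widetilde L))_{/S}$, whose generic fiber is the cokernel of the two Gysin maps out of $H^0(\widetilde Q)(-1)\oplus H^0(\widetilde L)(-1)$, spanned by (the classes of) the exceptional divisors $D_{-1},D_{-2}$. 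Hence the general formula \eqref{general-coaction}, written in a weight-adapted basis $\{[\omega_G],[\omega_1],[\omega_2]\}$ of the generic de Rham fiber, forces
\[
\Delta I_G^{\mathfrak m}=I_G^{\mathfrak m}\otimes(\mathbb L^{\mathfrak{dr}})^2+\sum_{i=1,2}\bigl[mot_G,[\sigma_G],[\omega_i]\bigr]^{\mathfrak m}\otimes\bigl[mot_G,[\omega_i]^\vee,[\omega_G]\bigr]^{\mathfrak{dr}}+1\otimes I_G^{\mathfrak{dr}};
\]
the outer two terms are immediate, so everything is the middle sum. The crucial difference from the non-vanishing-mass triangle of \S\ref{triangle-coaction-non-vanishing} is that the weight-$2$ classes are no longer images of face maps from quotient graphs — contracting an edge of the massless triangle produces a bubble that is divergent in $d=2$ — so I will compute both factors of the middle sum via residues, in the spirit of \S\ref{de-rham-side}.

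For the motivic side I represent $[\omega_i]$ so that its residue onto the exceptional $\Pp^1=D_{-i}$ is a logarithmic $1$-form with simple poles at $u_i=\widetilde Q\cap D_{-i}$ and at $g_i=F_i\cap D_{-i}$; the role of the auxiliary hyperplane $F_i$ is exactly to supply this second pole and fix the normalization. Since the residue is the transpose of the tube map on Betti cycles, pairing $\mathrm{Res}[\omega_i]$ against the tube of the boundary edge of $\sigma_G$ that lies on $D_{-i}$ — whose endpoints, after blowing up the simplex vertex $V(\alpha_j)\cap V(\alpha_k)$, are $p^i_0=D_{-i}\cap D_j$ and $p^i_1=D_{-i}\cap D_k$ — gives a period equal to $\log[g_iu_i|p^i_0p^i_1]$ up to an explicit algebraic prefactor. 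Since weight-$2$ mixed Tate motivic periods are determined by their periods, this forces $\bigl[mot_G,[\sigma_G],[\omega_i]\bigr]^{\mathfrak m}=a_i\log^{\mathfrak m}([g_iu_i|p^i_0p^i_1])$ for some $a_i\in k_S$. Unlike the box graph, there is here neither a convenient specialization point nor an analytic formula in the literature against which to normalize, so the $a_i$ remain undetermined — this is precisely the subtlety flagged before the theorem.

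For the de Rham side I use the connection, as in Lemma~\ref{computing-picard-fuchs}: the image of the Feynman form in the rank-one bundle $\mathrm{gr}^W_4 H^2(P^G\setminus(\widetilde Q\cup\widetilde L))_{/S}$ satisfies a Picard--Fuchs relation $\nabla_{q_1}[\omega_G]+B[\omega_G]=[d\beta]$, with $\beta$ a $1$-form with poles along $\widetilde Q$; reducing $[d\beta]$ through the relative total complex as in \S\ref{de-rham-side} rewrites it as a $k_S$-combination of the weight-$2$ classes, and comparing with $\nabla_{q_1}\bigl(P\log^{\mathfrak{dr}}(f)\mathbb L^{\mathfrak{dr}}\bigr)=\frac{\partial P}{\partial q_1}\log^{\mathfrak{dr}}(f)\mathbb L^{\mathfrak{dr}}+P\frac{1}{f}\frac{\partial f}{\partial q_1}\mathbb L^{\mathfrak{dr}}$ identifies $\bigl[mot_G,[\omega_i]^\vee,[\omega_G]\bigr]^{\mathfrak{dr}}$ with $\log^{\mathfrak{dr}}$ of a cross-ratio times $\mathbb L^{\mathfrak{dr}}$; equivalently one restricts to a pair of coordinate divisors and takes the residue of $\omega_G$ along $\widetilde L\cong\Pp^1$ (which meets $\widetilde Q$ in $\{f_0,f_1\}$ and the coordinate divisors in $d_1,d_2,d_3$), then applies the de Rham projection as in the residue argument of \S\ref{de-rham-side}. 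Here the symmetric shape of the answer emerges: $[d\beta]$ reduces into a one-dimensional subspace of $\mathrm{gr}^W_2(mot_G)_{\mathrm{dR}}$ — equivalently, the extension of $\mathrm{gr}^W_4$ by $\mathrm{gr}^W_2$ inside $mot_G$ is carried by a single logarithm — so both de Rham periods $\bigl[mot_G,[\omega_i]^\vee,[\omega_G]\bigr]^{\mathfrak{dr}}$ collapse to the common value $\bigl(\log^{\mathfrak{dr}}[f_0f_1|d_1d_2]+\log^{\mathfrak{dr}}[f_0f_1|d_1d_3]\bigr)\mathbb L^{\mathfrak{dr}}$, and the middle layer of the coaction becomes the single rank-one tensor displayed in the theorem.

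Finally the explicit formulas follow by reading off the marked points from $\Psi_G=\alpha_1+\alpha_2+\alpha_3$ and $\Xi_G=q_1^2\alpha_2\alpha_3+q_2^2\alpha_1\alpha_3+q_3^2\alpha_1\alpha_2$: on $D_{-i}\cong\Pp^1$ the point $u_i$ is the leading zero of $\Xi_G$ at $V(\alpha_j)\cap V(\alpha_k)$, which yields $[g_iu_i|p^i_0p^i_1]=q_j^2/q_k^2$, whereas on $\widetilde L$ the points $f_0,f_1$ are the two roots of $\Xi_G|_{\widetilde L}$, whose discriminant is $q_1^4+q_2^4+q_3^4-2q_1^2q_3^2-2q_2^2q_3^2$, giving the stated square roots and cross-ratios. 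One must first pass, as in the remarks following Proposition~\ref{weight-graded-pieces}, to the cover of $S$ on which $\sqrt{q_1^4+q_2^4+q_3^4-2q_1^2q_3^2-2q_2^2q_3^2}$ is a regular function, so that $mot_G$ is a constant variation. The main obstacle is not any single computation but the blow-up bookkeeping: keeping track of which of the marked points $l_i,u_i,g_i,t_i,p^i_0,p^i_1,f_0,f_1,d_i$ lie on which components of $D\cup\widetilde Q\cup\widetilde L$, checking via Lemma~\ref{pullback-lemma} (and its analogues) that $\pi_G^*\omega_G$ stays regular along all three exceptional divisors, and verifying that the residues used on the motivic and the de Rham sides are transposes of compatible tube maps — it is exactly that compatibility which makes the two weight-$2$ de Rham periods agree and hence the formula factor.
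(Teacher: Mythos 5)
Your identification of the weight-graded pieces (Lemma~\ref{triangle-vanishing} with $v=3$), of the marked points, and of the role of the auxiliary hyperplanes is consistent with the paper, but the mechanism you give for the motivic side does not exist in this setting, and it is exactly the missing idea. The exceptional curves $D_{-i}$ are irreducible components of the relative divisor $D=\pi_G^{-1}(\Delta)$, not of the removed polar locus $\widetilde{Q}\cup\widetilde{L}$: de Rham classes of $mot_G$ are represented by forms regular along $D_{-i}$, so there is no residue map ``onto $D_{-i}$'', and the Betti chain one pairs with is the boundary edge of the blown-up simplex lying on $D_{-i}$ (with endpoints $p^i_0,p^i_1$), not a tube around it --- tubes are transposes of residues along \emph{removed} divisors, which is how $\widetilde{L}$ is treated on the de Rham side and how the spherical lune was used for the box graph, but is not available here. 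What the paper actually does, and what your argument needs, is the pullback $i^*$ to the motive in which $F_1,F_2$ are also removed: one computes that $H^2(P^G\setminus(\widetilde{Q}\cup\widetilde{L}\cup F_1\cup F_2))\cong\mathbb{Q}(-2)$ is pure of weight four, so the weight-two classes $i^*(e_i)$ die in the cohomology of the ambient open surface, become exact there, and the coboundary in the relative total complex pushes them onto the faces, where the only non-exact contributions are $1$-forms on $D_{-1},D_{-2}$ with simple poles at $u_l$ and $g_l$. This purity-and-exactness step appears nowhere in your proposal; without it your claim that $[\omega_i]$ can be ``represented so that its residue onto $D_{-i}$'' is the desired log form is unsupported, and your stronger assertion that each basis vector pairs with a single cross-ratio logarithm (rather than a $k_S$-combination of $\log^{\mathfrak{m}}([g_1u_1|p^1_0p^1_1])$ and $\log^{\mathfrak{m}}([g_2u_2|p^2_0p^2_1])$, which is all the argument yields and all that is needed after collecting constants into $a_1,a_2$) is unjustified.

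On the de Rham side your primary route (a Picard--Fuchs relation as in Lemma~\ref{computing-picard-fuchs}) genuinely differs from the paper, which instead uses the residue morphism along $\widetilde{L}$ onto $H^1(\Pp^1\setminus\{f_0,f_1\},\{d_1,d_2,d_3\})$ followed by the de Rham projection. The connection route is plausible but not immediate: Lemma~\ref{computing-picard-fuchs} is proved for a smooth quadric polar locus in $\Pp^3$, whereas here $\omega_G$ has poles along both $\widetilde{Q}$ and $\widetilde{L}$ on a blown-up $\Pp^2$, so the Griffiths pole reduction and the passage through the total complex would have to be redone in that setting. More importantly, your claim that $[d\beta]$ lands in a one-dimensional subspace of the weight-two part, so that both de Rham conjugates ``collapse to the common value'' $\log^{\mathfrak{dr}}([f_0f_1|d_1d_2])+\log^{\mathfrak{dr}}([f_0f_1|d_1d_3])$, is asserted rather than derived; the residue argument only shows that each $\left[mot_G,e_i^{\vee},[\omega_G]\right]^{\mathfrak{dr}}$ is some $k_S$-linear combination of these two de Rham logarithms, and the factorization of the middle layer of the coaction into a single tensor product is precisely what must be established (or absorbed into the choice of basis together with the undetermined constants). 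As written, both halves of your middle term rest on identifications that are stated but not proved.
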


\begin{proof}
From Lemma \ref{triangle-vanishing} and the definition of the coaction we have
\[
\Delta I_G^{\mathfrak{m}} = I_G^{\mathfrak{m}} \otimes (\mathbb{L}^{\mathfrak{dr}})^2 + \sum_{i=1,2} [mot_G, [\sigma_G], e_i]^{\mathfrak{m}} \otimes \left[mot_G, e_i^{\vee}, \omega_G\right]^{\mathfrak{m}} + 1\otimes I_G^{\mathfrak{dr}},
\]
where $e_i$ are weight 2 elements of a chosen basis of $(mot_G)_{\mathrm{dR}}$.
We restrict to the fiber over the generic point. To identify the de Rham side we consider the residue morphism along $\tilde{L}$:
\[
\mathrm{Res}_{\tilde{L}} : H^2(P^G \setminus (\tilde{Q} \cup \tilde{L}), D \setminus (\tilde{Q} \cup \tilde{L}) \cap D) \rightarrow H^1(\tilde{L} \setminus \tilde{L} \cap \tilde{Q}, \tilde{L} \cap D) \cong H^1(\mathbb{P}^1 \setminus \{f_0,f_1\},\{d_1,d_2,d_3\})
\]
This morphism gives us an equivalence of de Rham periods, and by computing the motivic periods of the motive on the right hand side, and their de Rham projections (see \ref{examples}), we get that each de Rham period in our coaction is a linear combination of $\log^{\mathfrak{dr}}([f_0f_1|d_1d_2])$ and $\log^{\mathfrak{dr}}([f_0f_1|d_1d_3])$.

To determine the motivic periods in the coaction consider the pullback morphism:
\[
i^* : H^2(P^G \setminus (\tilde{Q} \cup \tilde{L}), D \setminus (\tilde{Q} \cup \tilde{L}) \cap D) \rightarrow H^2(P^G \setminus (\tilde{Q} \cup \tilde{L} \cup \cup_{i=1,2,3}F_i),D \setminus (\tilde{Q} \cup \tilde{L} \cup \cup_{i=1,2}F_i))
\]

Computing the Gysin spectral sequence \eqref{helful-spec-seq} and the relative cohomology spectral sequence for the motive on the right we get that:
\[
H^2(P^G \setminus (\tilde{Q} \cup \tilde{L} \cup \cup_{i=1,2}F_i) \cong \mathbb{Q}(-2)
\]
and 
\[
H^1(D_j \setminus (\tilde{Q} \cup \tilde{L} \cup \cup_{i=1,2}F_i)) \cong \mathbb{Q}(-1) \quad \text{for } j=\{-1,-2\}
\]
and
\[
H^1(D_j \setminus (\tilde{Q} \cup \tilde{L} \cup \cup_{i=1,2}F_i)|_{D_j}) \cong 0 \quad \text{for } j=\{1,2\},
\]
therefore, since the pullback preserves the weight, the class of $i^*(e_i)$ vanishes in $H^2(P^G \setminus (\tilde{Q} \cup \tilde{L} \cup \cup_{i=1,2,3}F_i)$. We can therefore write it as the differential of a 1-form which in turn restricts to a $k_S$-linear combination of non-exact differential forms on $D_j \setminus (\tilde{Q} \cup \tilde{L} \cup \cup_{i=1,2}F_i)|_{D_j}$ for $j=\{-2,-1\}$. We get an equivalence of classes of differential forms:
\begin{equation}
\begin{split}
[(i^*(e_i),0,\ldots,0)] & = b_{i1}[(0,\theta_1,0,\ldots,0)] +\\
&+ b_{i2}[(0,0,\theta_2,0,\ldots,0)]
\end{split}
\end{equation}
where $\theta_l$ are forms of the form \eqref{diff-form-2} with simple poles at the points $u_l$ and $g_l$, for $l=1,2$.
We denote by $E_j:= H^1(D_j \setminus (\tilde{Q} \cup \tilde{L} \cup \cup_{i=1,2}F_i)|_{D_j}, D_j\cap F_k, D_j\cap F_l)$ for $j=\{-2,-1\}$ and $l,k,j$ pairwise distinct. We get an equivalence of motivic periods:
\begin{equation}
\begin{split}
\left[H_G,[\sigma_G],e_i\right]^{\mathfrak{m}} & = b_{i1}[E_1,[\sigma_G|_{E_1}],[(0,\theta_1,0,\ldots,0)]]^{\mathfrak{m}} + \\
&+ b_{i2}[E_2,[\sigma_G|_{E_2}],[(0,0,\theta_2,\ldots,0)]]^{\mathfrak{m}}
\end{split}
\end{equation}
Now notice that $[E_l,[\sigma_G|_{E_l}],[(0,\ldots,\theta_l,0,\ldots,0)]]^{\mathfrak{m}} = \log^{\mathfrak{m}}(g_lu_l|p^l_0p^l_1)$, and collect all the constants into $a_1,a_2$ to obtain the result. It is a standard exercise to put coordinates on a blow up of $\Pp^2$ at a point (see proof of lemma \ref{pullback-lemma}), which in turn enables us to compute the intersection points and the cross ratios, obtaining the result.
\end{proof}

\begin{rmk}
The constants $a_1,a_2$ depend on the choice of a basis of $(mot_G)_{\mathrm{dR}}$. In order to determine them one would need to write down such a basis, and follow the recipe in the proof of the previous theorem. 
\end{rmk}

\begin{rmk}
There are two ways of relating the motivic periods which are the conjugates of the motivic Feynman amplitude of the graph in the previous theorem to motives of its subquotient graphs. One is to consider the \textit{affine motive of a graph}, defined in \cite[5.4 and 8.5]{Brown2}. This involves removing a hyperplane for each motic subgraph of a graph, or equivalently an exceptional divisor in the blow up, in order to make the faces of $D$ affine, as was done in the proof of the previous theorem. In the example of the massless triangle the affine motive would be
\[
H^2(P \setminus (\tilde{Q} \cup \tilde{L} \cup F_1 \cup F_2 \cup F_3), D \setminus (\tilde{Q} \cup \tilde{L} \cup F_1 \cup F_2 \cup F_3))
\]
where $F_1$ and $F_2$ are as in the theorem above, and $F_3 := V(\alpha_1 + \alpha_2)$. Removing the third hyperplane is superfluous in this example because in that case we would get another motivic logarithm in the coaction, $\log^{\mathfrak{m}}\left(\frac{q_1^2}{q_2^2}\right)$, but there is an obvious relation with the two motivic logarithms in the theorem. These three motivic logarithms are periods of the affine motives of the subgraphs of the triangle graph obtained by removing one edge. However, in physics one rarely thinks of cycle-free graphs contributing logarithms.

Therefore, in order to produce a more satisfying graphical interpretation of the motivic side of the coaction in the preceding example, we should consider regularizing the motivic periods of the bubble graphs which are obtained by contracting an edge of the triangle graph. We could then obtain the conjugates in the coaction as a linear combination of these regularized motivic Feynman periods of quotient graphs. For an example  of how this works for a triangle graph with one mass vanishing in $d=2$, which is very close to Theorem 2, using tangential base point regularization see \cite[Appendix II]{Brown2}. Conjecture 1 in \cite{Brown2} predicts that after including regularized motivic Feynman periods the Galois conjugates would be motivic periods of subquotient graph motives. However, the appropriate regularization procedure for motivic Feynman amplitudes in general remains to be worked out. In addition to a generalization of the above mentioned approach in which one uses tangential base points, there is also some numerical evidence that dimensional regularization, which is the regularization method of choice in physics, could be compatible with the coaction, at least for some families of graphs -- see \cite{ABDG}. Moreover the results in \cite{ABDG} as well as \cite[\S 14]{BK} suggest that there could exist an interpretation of the de Rham periods in the Galois coaction in terms of Cutkosky cuts.

Other interesting directions for further inquiry include applying the tools presented here to some non-polylogarithmic Feynman integrals which have already been studied from a motivic point of view \cite{BV,BKV}, as well as studying the situations in which the masses and momenta lie outside of $K^{gen}_{F,G}$ in Definition \ref{space-of-kinematics-defi}, but for which there is numerical evidence that shows the Galois coaction could still be closed on the space of motivic Feynman periods, such as the ones which arise from QED \cite{S}.
\end{rmk}

\end{subsubsection}
\end{subsection}

\end{section}

\newpage

Mathematical Institute, University of Oxford, OX2 6GG Oxford, UK \\
E-mail address: matija.tapuskovic@maths.ox.ac.uk
\end{document}